\newcommand{\limto}{\rightarrow}
\newcommand{\R}{\mathbb R}
\newcommand{\argmin}{\mathop{\rm argmin}}
\newcommand{\cf}{{\it cf.}}
\newcommand{\eg}{{\it e.g.}}
\newcommand{\ie}{{\it i.e.}}
\newtheorem{thm}{Theorem}
\newtheorem{coro}{Corollary}
\newtheorem{prop}{Proposition}
\newtheorem{ass}{Assumption}
\newtheorem{fact}{Fact}
\newcounter{spb}
\newcommand{\subpb}{(\alph{spb}) \addtocounter{spb}{1}}
\newcommand{\resetspb}{\setcounter{spb}{1}}
\begin{document}

\title{A Family of Inexact SQA Methods for Non-Smooth Convex Minimization with Provable Convergence Guarantees Based on the Luo-Tseng Error Bound Property
%IRPN: Provable Convergence Guarantees for Non-Smooth Non-Strongly Convex Minimization
\thanks{This research is supported in part by the Hong Kong Research Grants Council (RGC) General Research Fund (GRF) Projects CUHK 14206814 and CUHK 14208117 and in part by a gift grant from Microsoft Research Asia.}
%\thanks{Grants or other notes
%about the article that should go on the front page should be
%placed here. General acknowledgments should be placed at the end of the article.}
}
%\subtitle{Do you have a subtitle?\\ If so, write it here}

\titlerunning{Inexact SQA Methods and Error Bound-Based Convergence Analysis}        % if too long for running head

\author{Man-Chung Yue         \and
        Zirui Zhou		\and
        \mbox{Anthony Man-Cho So}
}

\authorrunning{M.-C. Yue, Z. Zhou, A. M.-C. So} % if too long for running head

\institute{Man-Chung Yue \at
              Imperial College Business School \\
              Imperial College London, United Kingdom \\
              %Tel.: +123-45-678910\\
              %Fax: +123-45-678910\\
              \email{m.yue@imperial.ac.uk}           %  \\
%             \emph{Present address:} of F. Author  %  if needed
           \and
           Zirui Zhou \at
           Department of Mathematics \\
           Simon Fraser University, Canada \\
              \email{ziruiz@sfu.ca}
           \and
           Anthony Man-Cho So \at
           Department of Systems Engineering and Engineering Management \\
              The Chinese University of Hong Kong, Shatin, N. T., Hong Kong\\
           	  \email{manchoso@se.cuhk.edu.hk}
}

\date{Received: date / Accepted: date}
% The correct dates will be entered by the editor

\maketitle

\begin{abstract}
We propose a new family of inexact sequential quadratic approximation (SQA) methods, which we call the \emph{inexact regularized proximal Newton} (\textsf{IRPN}) method, for minimizing the sum of two closed proper convex functions, one of which is smooth and the other is possibly non-smooth. Our proposed method features strong convergence guarantees even when applied to problems with degenerate solutions while allowing the inner minimization to be solved inexactly. Specifically, we prove that when the problem possesses the so-called Luo-Tseng error bound (EB) property, \textsf{IRPN} converges globally to an optimal solution, and the local convergence rate of the sequence of iterates generated by \textsf{IRPN} is linear, superlinear, or even quadratic, depending on the choice of parameters of the algorithm. Prior to this work, such EB property has been extensively used to establish the linear convergence of various first-order methods. However, to the best of our knowledge, this work is the first to use the Luo-Tseng EB property to establish the superlinear convergence of SQA-type methods for non-smooth convex minimization. As a consequence of our result, \textsf{IRPN} is capable of solving regularized regression or classification problems under the high-dimensional setting with provable convergence guarantees. We compare our proposed \textsf{IRPN} with several empirically efficient algorithms by applying them to the $\ell_1$-regularized logistic regression problem. Experiment results show the competitiveness of our proposed method.
\keywords{convex composite minimization \and sequential quadratic approximation \and proximal Newton method \and error bound \and superlinear convergence}
% \PACS{PACS code1 \and PACS code2 \and more}
% \subclass{MSC code1 \and MSC code2 \and more}
\end{abstract}

\section{Introduction}
\label{sec:intro}
A wide range of tasks in machine learning and statistics can be formulated as a non-smooth convex minimization problem of the form\footnote{Some authors refer to this as a convex composite minimization problem.}
\begin{equation}\label{eq:basic-prob}
\min_{x\in\mathbb{R}^n} \left\{ F(x) := f(x) + g(x) \right\},
\end{equation}
where $f,g:\mathbb{R}^n\rightarrow(-\infty,+\infty]$ are closed proper convex functions with $f$ being 
%\footnote{This implies that we assume $\mbox{dom}(f) = \mathbb{R}^n$.} 
twice continuously differentiable on an open subset of $\mathbb{R}^n$ containing the effective domain $\mbox{dom}(g)$ of $g$ and $g$ being non-smooth.
%First-order methods, such as proximal gradient algorithm and its accelerated versions, are among the most popular choices for solving problem \eqref{eq:basic-prob} and have been the subject of intense research over the last decade. However, they suffer from two major drawbacks. First, they are particularly slow for reaching solutions with high accuracy, especially for ill-conditioned problems. Second, for problems where the function value or the gradient is expensive to compute, such as the logistic loss function and likelihood functions arising in conditional random fields, employing second-order information can improve the overall performance of the algorithms~\cite{SBFM09,YHL12,ZYDR14}.
A popular choice for solving problem \eqref{eq:basic-prob} is the sequential quadratic approximation (SQA) method (also called the proximal Newton method). Roughly speaking, in iteration $k$ of a generic SQA method, one computes an (approximate) minimizer $\hat{x}^{k+1}$ of a quadratic model $q_k$ of the objective function $F$ at $x^k$, where
\begin{equation}\label{eq:quad-approx}
q_k(x) := f(x^k) + \nabla f(x^k)^T(x - x^k) + \frac{1}{2}(x - x^k)^TH_k(x-x^k) + g(x)
\end{equation}
and $H_k$ is a positive definite matrix approximating the Hessian $\nabla^2 f(x^k)$ of $f$ at $x^k$. A step size $\alpha_k$ is obtained by performing a backtracking line search along the direction $d_k:=\hat{x}^{k+1}-x^k$, and then $x^{k+1} := x^k + \alpha_k d^k$ is returned as the next iterate. Since in most cases, the (approximate) minimizer $\hat{x}^{k+1}$ of~\eqref{eq:quad-approx} does not admit a closed-form expression, an iterative algorithm, which we shall refer to as the \emph{inner solver} in the sequel, is invoked. Throughout the paper, we will call the problem of minimizing the quadratic model~\eqref{eq:quad-approx} the \emph{inner problem} and its solution the \emph{inner solution}. We will also refer to the task of minimizing~\eqref{eq:quad-approx} as \emph{inner minimization}.

There are three important ingredients that, more or less, determine an SQA method: the approximate Hessian $H_k$, the inner solver for minimizing $q_k$, and the stopping criterion of the inner solver to control the inexactness of the approximate inner solution $\hat{x}^{k+1}$. Indeed, many existing SQA methods and their variants that are tailored for special instances of problem~\eqref{eq:basic-prob} can be obtained by specifying the aforementioned ingredients. Friedman et al.~\cite{FHHT07} developed the \textsf{GLMNET} algorithm for solving the $\ell_1$-regularized logistic regression, where $H_k$ is set to be the exact Hessian $\nabla^2f(x^k)$ and a coordinate minimization method is used as the inner solver. Yuan et al.~\cite{YHL12} improved \textsf{GLMNET} by replacing $H_k$ with $\nabla^2f(x^k) + \nu I$ for some constant $\nu>0$ and adding a heuristic adaptive stopping strategy for inner minimization. This algorithm, called \textsf{newGLMNET}, is now the workhorse of the well-known LIBLINEAR package~\cite{FCHWL08} for large-scale linear classification. Hsieh et al.~\cite{HDRS11} proposed the \textsf{QUIC} algorithm for solving sparse inverse covariance matrix estimation, which makes use of a quasi-Newton model to form the quadratic model~\eqref{eq:quad-approx}. Similar strategies have also been employed in Olsen et al.~\cite{ONRO12}, where the inner problem is solved by the fast iterative soft-shrinkage algorithm (FISTA)~\cite{beck2009fast}. Other SQA variants can be found, \eg, in \cite{SBFM09,BF12,ZYDR14}.

Although the numerical advantage of the above algorithms has been well documented, their global convergence and convergence rate guarantees require the inner problems to be solved exactly. Such requirement is rarely satisfied in practice. To address this issue, Lee et al.~\cite{LSS14} and Byrd et al.~\cite{BNO16} proposed several families of SQA methods along with stopping criteria for the inner problem and showed that they are globally convergent and have a local superlinear convergence rate. However, all these convergence guarantees require the strong convexity of $f$. In fact, the strong convexity property is needed not only in the analyses but also for the well-definedness of the proposed methods.\footnote{For instance, the exact Hessian $H_k=\nabla^2 f(x^k)$ is used in~\cite{LSS14}. If $f$ is not strongly convex, then neither is the quadratic model \eqref{eq:quad-approx}. As such, the inner problem can have multiple minimizers and the next iterate $x^{k+1}$ is not well defined.}
%One crucial assumption, without which the analyses break down or the algorithms are not well-defined\footnote{For \cite{LSS14}, $H_k=\nabla^2 f(x^k)$. If strong convexity is missing, the quadratic model \eqref{eq:quad-approx} is not strongly convex and can have multiple minimizers. Hence the next iterate $x^{k+1}$ is not well-defined.}, that underpins their theoretical results is the strong convexity of $f$. 
Unfortunately, such a property is absent in many applications of interest. For example, consider the $\ell_1$-regularized least squares regression problem, whose objective function takes the form
%$\ell_1$-regularized logistic regression problem, 
\begin{equation*}
%F(x) = \frac{1}{m}\sum_{i=1}^m \log(1+\exp(-b_i\cdot a_i^Tx))+\mu\|x\|_1.
F(x) = \frac{1}{m}\sum_{i=1}^m \left( a_i^Tx - b_i \right)^2 + \mu\|x\|_1, \quad \mu>0.
\end{equation*}
Observe that the smooth part $x \mapsto \tfrac{1}{m}\sum_{i=1}^m \left( a_i^Tx - b_i \right)^2$ is strongly convex if and only if the data matrix $A:=[a_1 \, \cdots \, a_m]\in\mathbb{R}^{n\times m}$ has full row rank. The latter cannot be ensured in most applications and is even impossible in the high-dimensional (\ie, $n\gg m$) setting. 
%A major difference between our theoretical results and those in~\cite{LSS14,BNO16} is that we do not require strong convexity of the objective function whereas they do. 
In addition, the authors of~\cite{LSS14} did not provide any global convergence result for the inexact version of their proposed method, while those of~\cite{BNO16} considered only the case where $g(x) = c\cdot \|x\|_1$ for some constant $c>0$. On another front, Scheinberg and Tang~\cite{ST16} proposed an inexact SQA-type method and analyzed its global complexity. Although their analysis does not require the strong convexity of $f$, it only yields a sublinear convergence rate for the proposed method. In view of the above discussion, the theoretical study of existing SQA methods is rather incomplete, and we are motivated to develop an algorithm for solving problem~\eqref{eq:basic-prob} that does not require an exact inner solver or the strong convexity of $f$ but can still be shown to converge globally and possess a local superlinear convergence rate. 
%possessing desirable convergence properties, such as global convergence and asymptotic superlinear rate.

In this paper, we propose a new family of inexact SQA methods, which we call the \emph{inexact regularized proximal Newton} (\textsf{IRPN}) method, that are capable of solving non-strongly convex instances of problem \eqref{eq:basic-prob} without using an exact inner solver. In iteration $k$, \textsf{IRPN} takes $H_k$ to be the regularized Hessian $H_k = \nabla^2 f(x^k) + \mu_kI$, where $\mu_k = c\cdot r(x^k)^{\rho} > 0$ for some constants $c>0$, $\rho\in[0,1]$, and $r$ is an easily computable residual function that measures the proximity of $x^k$ to the optimal solution set (see Section \ref{sec:error-bound} for details). The inner problem is solved up to an accuracy that is determined by an adaptive inexactness condition, which also depends on the residue $r(x^k)$ and the parameter $\rho$. It is worth noting that the idea of regularization is not new for second-order methods. Indeed, Li et al. \cite{LFQY04} investigated the regularized Newton method for solving smooth convex minimization problems with degenerate solutions. The well-known Levenberg-Marquardt (LM) method for solving nonlinear equations is essentially a regularized Gauss-Newton method~\cite{M78,YF01}. The algorithm that is closest in spirit to ours is \textsf{newGLMNET} \cite{YHL12}, which aims at solving problems of the form \eqref{eq:basic-prob} and also uses a regularized Hessian, albeit the regularization parameter remains constant throughout the entire execution; \ie, $\mu_k = \nu$ for all $k$ with $\nu>0$. Hence, it may be seen as a special case ($\rho = 0$) of \textsf{IRPN}. However, the constant $\nu$ is chosen empirically and a heuristic stopping rule is adopted for the inner solver.

Another motivation of this research comes from the seminal paper \cite{LT93} of Luo and Tseng, in which a certain error bound (EB) property was introduced. The Luo-Tseng EB property is provably a less stringent requirement on the objective function $F$ of problem~\eqref{eq:basic-prob} than strong convexity and has been shown to hold for a wide range of $F$'s; see Fact~\ref{fact:eb-holds}. Since its introduction, the Luo-Tseng EB property has played an important role in the convergence rate analysis of various first-order methods. Specifically, it has been utilized to establish the linear convergence of the projected gradient descent, proximal gradient descent, coordinate descent, and block coordinate descent methods for solving possibly non-strongly convex instances of problem~\eqref{eq:basic-prob}; see~\cite{LT93,tseng2009coordinate,tseng2010approximation,HZSL13,ZJL13}. It is interesting to note that if the objective function $F$ of problem~\eqref{eq:basic-prob} satisfies the Luo-Tseng EB property, then it also satisfies the so-called \emph{Kurdyka-{\L}ojasiewicz (KL)} property with exponent $1/2$~\cite{LP17}.\footnote{In~\cite{karimi2016linear} the authors considered global versions of the Luo-Tseng EB and KL properties and showed that they are equivalent. However, none of the scenarios listed in Fact~\ref{fact:eb-holds} except (S1) are known to possess the global Luo-Tseng EB property stated in~\cite{karimi2016linear}.} The latter can also be used to establish the linear convergence of various first-order methods; see, \eg,~\cite{LP17,LSW17} and the references therein.
%Interestingly, it has been shown that the Luo-Tseng EB condition is equivalent to the \emph{proximal-Polyak-Lojasiewicz condition} and the \emph{Kurdyka-Lojasiewicz condition with exponent $\tfrac{1}{2}$} which are conditions used in different analyses of first-order methods, see~\cite{karimi2016linear}.}

Following the great success of EB-based analysis of first-order methods, it is natural to ask whether one can develop an EB-based analysis of second-order methods for solving problem \eqref{eq:basic-prob}. Such an analysis is highly desirable, as it would advance our understanding of the strength and limitation of SQA-type methods and more accurately capture the interplay between the algorithmic and geometric properties of problem~\eqref{eq:basic-prob}. It turns out that some partial answers in this direction are available. Li et al. \cite{LFQY04} proposed a regularized Newton method along with inexactness conditions for solving \emph{smooth} convex minimization problems and proved its global convergence and local quadratic convergence based on an EB property. Unfortunately, their analysis relies heavily on the eigenvalue perturbation properties of the Hessian of the smooth objective function and hence cannot be directly extended to second-order methods for solving the \emph{non-smooth} minimization problem~\eqref{eq:basic-prob}. Tseng and Yun \cite{tseng2009coordinate} proposed the (block) coordinate gradient descent (CGD) method, which includes the SQA method as a special case, for solving problem~\eqref{eq:basic-prob} with non-convex $f$ and non-smooth convex $g$ that has a separable structure. Although their analysis utilizes the Luo-Tseng EB property, their proposed CGD method requires the sequence of approximate Hessians $\{H_k\}_{k\ge0}$ to be uniformly lower bounded---\ie, $\lambda_{\min}(H_k)\ge \underline{\lambda}$ for all $k$, where $\underline{\lambda}>0$ is some constant---and the provable local convergence rate is only linear. Another important contribution along this direction is the work~\cite{Fis02} by Fischer, in which he studied an abstract iterative framework for solving a class of generalized equations and proved that it has a local superlinear convergence rate. Besides requiring an EB-type property and an inexactness condition, the result requires the distance between two consecutive iterates to be linearly bounded by the distance between the current iterate and the set of solutions $\mathcal{X}$ to the generalized equation; \ie, $\|x^{k+1} - x^k\|=O(\mbox{dist}(x^k,\mathcal{X}))$. However, for a concrete algorithm, establishing such an estimate is usually a non-trivial task. Moreover, the iterative framework in \cite{Fis02} does not involve regularization (see displayed equations~(3) and~(4) of~\cite{Fis02}). Hence, it is not obvious how our proposed method can be put into that framework. Recently, the authors of~\cite{FFH13,facchinei2014lp,fischer2016globally} studied an inexact Newton-type method for solving constrained systems of equations and proved its global convergence and local quadratic convergence based on an EB property. 
%Facchinei et al.~\cite{facchinei2014lp} proposed a Newton-type method for solving constrained systems of equations and proved its local quadratic convergence based on an EB property.  The global convergence of the algorithm was later established by Fischer et al.~\cite{fischer2016globally} using a backtracking line search. 
As the set of optimal solutions to problem~\eqref{eq:basic-prob} can be characterized as the set of solutions to a certain system of equations (see Fact~\ref{fact:opt-fix-pt} and the discussion following it), their algorithm can also be applied to solve problem~\eqref{eq:basic-prob}. However, their method is fundamentally different from ours, as the inner problem of their method involves solving a linear program, while that of ours involves solving an unconstrained strongly convex minimization problem (see Section~\ref{sec:algorithm} for details). Interestingly, in the context of problem~\eqref{eq:basic-prob}, the EB property considered by them also coincides with ours. Nevertheless, the convergence analysis of their proposed method assumes that the linear program defining the inner problem satisfies a so-called uniform inf-boundedness property in a neighborhood of an optimal solution to problem~\eqref{eq:basic-prob}; see~\cite[Assumption 3]{facchinei2014lp}. Although several sufficient conditions for such assumption to hold are developed in~\cite[Section 3]{facchinei2014lp}, they do not apply to some of the instances of problem~\eqref{eq:basic-prob} that we are interested in.
%require certain assumptions that could be difficult to verify
%However, our work differs from~\cite{facchinei2014lp,fischer2016globally} in two important aspects. First, the inner problem of their algorithm involves solving a linear program, while that of \textsf{IRPN} involves solving an unconstrained minimization problem whose objective is the sum of a strongly convex quadratic function and a non-smooth convex function (see Algorithm~\ref{alg:IRPN}).

%of the form $$ S(x)=0,\quad x\in \Omega. $$   By taking $S = R$ and $\Omega = \mathbb{R}^n$, 

Our contributions in this paper can be summarized as follows. First, we propose a new family of inexact SQA methods called \textsf{IRPN} for solving the non-smooth convex minimization problem~\eqref{eq:basic-prob}. The proposed method has a parameter $\rho\in[0,1]$ and comes with a stopping criterion for inner minimization that allows us to solve the inner problem inexactly. Second, we establish the global convergence and local convergence rate of \textsf{IRPN}. More precisely, we prove that each accumulation point of the sequence of iterates generated by \textsf{IRPN} is optimal for problem~\eqref{eq:basic-prob}. Furthermore, when problem~\eqref{eq:basic-prob} possesses the Luo-Tseng EB property, the sequence converges to an optimal solution, and the local convergence rate is at least \emph{$R$-linear} if $\rho=0$, \emph{$Q$-superlinear} if $\rho\in(0,1)$, and \emph{$Q$-quadratic} if $\rho = 1$ (see~\cite[Appendix A.2]{NW06} for the definitions of these different types of convergence). Our analysis is novel and establish, for the first time, the superlinear convergence of inexact SQA-type methods when applied to instances of problem \eqref{eq:basic-prob} that possess the Luo-Tseng EB property.

The paper is organized as follows. In Section \ref{sec:preliminaries}, we set the stage for our theoretical development by stating the basic assumptions and reviewing the relevant existing results. We also discuss the Luo-Tseng EB property in the context of problem~\eqref{eq:basic-prob} and some other related regularity conditions. In Section~\ref{sec:algorithm}, we detail our proposed \textsf{IRPN}, including the specification of the regularized Hessian and inexactness condition for inner minimization. In Section~\ref{sec:convergence-analysis}, we establish the global convergence and local convergence rate of \textsf{IRPN}. We then present some numerical results in Section~\ref{sec:numerical}. Finally, we conclude our paper in Section \ref{sec:conclusion}.

\emph{Notation.} We denote the optimal value of and the set of optimal solutions to problem \eqref{eq:basic-prob} by $F^{*}$ and $\mathcal{X}$, respectively. We use $\|\cdot\|$ to denote the Euclidean norm for vectors and the operator norm (see~\cite[Exercise I.2.4]{B97}) for matrices. Given a set $C\subseteq \mathbb{R}^n$, we denote by $\mbox{dist}(x, C)$ the distance from $x$ to $C$; \ie, $\mbox{dist}(x,C) = \inf\{\|x - u\|\mid u\in C\}$. For a closed convex function $h$, we denote by $\partial h$ the subdifferential of $h$. If $h$ is continuously differentiable (resp. twice continuously differentiable), we denote by $\nabla h$ (resp. $\nabla^2 h$) the gradient (resp. Hessian) of $h$.

\section{Preliminaries}\label{sec:preliminaries}
Consider the convex minimization problem~\eqref{eq:basic-prob}. We assume that $f,g$ are closed proper convex functions with $f$ being twice continuously differentiable on an open subset of ${\mathbb R}^n$ containing ${\rm dom}(g)$ and $g$ being possibly non-smooth. To avoid ill-posed problems, we assume that the set of optimal solutions $\mathcal{X}$ to problem~\eqref{eq:basic-prob} is non-empty, so that the optimal value $F^*$ of problem~\eqref{eq:basic-prob} is finite. Furthermore, we make the following assumptions.
\begin{ass}\label{ass:smoothness}
The function $f$ in problem \eqref{eq:basic-prob} satisfies the following:
\begin{enumerate}[wide = 0pt, labelwidth = 2em, labelsep*=0em, itemindent = 0pt, leftmargin = \dimexpr\labelwidth + \labelsep\relax, noitemsep,topsep = 1ex, font=\normalfont, label=(\alph*)]
\item The gradient $\nabla f$ is Lipschitz continuous on an open set $\mathcal{U}$ containing ${\rm dom}(g)$; \ie, there exists a constant $L_1>0$ such that
$$%\begin{equation}\label{eq:lip-grad}
\|\nabla f(y) - \nabla f(z)\| \leq L_1\|y - z\| \quad\forall y,z\in\mathcal{U}.
$$%\end{equation}
\label{ass:L1}

\item The Hessian $\nabla^2f$ is Lipschitz continuous on an open set $\mathcal{U}$ containing ${\rm dom}(g)$; \ie, there exists a constant $L_2>0$ such that
$$%\begin{equation}\label{eq:lip-hess}
\|\nabla^2 f(y) - \nabla^2 f(z)\| \leq L_2\|y - z\| \quad\forall y,z\in\mathcal{U}.
$$%\end{equation}
\label{ass:L2}
%\item[(c).] The Hessian of $f$ is bounded on $\mbox{dom}(g)$, \ie, there exists a constant $\Lambda>0$ such that
%\begin{equation}\label{eq:bd-hess}
%\lambda_{\max}(\nabla^2 f(x)) \leq \Lambda \quad \forall x\in\mbox{dom}(g).
%\end{equation}
\end{enumerate}
\end{ass}
\noindent The above assumptions are standard in the analysis of Newton-type methods. As we will see in Sections 4 and 5, Assumption~\ref{ass:smoothness}\ref{ass:L1} is crucial to the global convergence analysis of \textsf{IRPN}, while Assumption~\ref{ass:smoothness}\ref{ass:L2} is needed for the local convergence analysis. The following result is a direct consequence of Assumption~\ref{ass:smoothness}; see, \eg,~\cite[Lemma 1.2.2]{nesterov2004introductory}.
\begin{fact}\label{fact:bound-hess}
Under Assumption \ref{ass:smoothness}\ref{ass:L1}, for any $x\in{\rm dom}(g)$, we have
$$\lambda_{\max}(\nabla^2 f(x)) \leq L_1.$$ 
\end{fact}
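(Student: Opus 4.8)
The plan is to exploit the variational characterization of the largest eigenvalue of a symmetric matrix together with the fact that the Hessian arises as the (directional) derivative of the gradient, so that the Lipschitz bound on $\nabla f$ transfers directly to a bound on the quadratic form induced by $\nabla^2 f(x)$. Fix $x\in{\rm dom}(g)$. Since $\nabla^2 f(x)$ is real symmetric, I would start from
$$
\lambda_{\max}(\nabla^2 f(x)) = \max_{\|v\|=1} v^T \nabla^2 f(x)\, v,
$$
and aim to bound $v^T\nabla^2 f(x)\,v$ by $L_1$ uniformly over unit vectors $v$.

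The key step is to write this quadratic form as a limit of difference quotients of the gradient. Because $f$ is twice continuously differentiable on the open set $\mathcal{U}\supseteq{\rm dom}(g)$, the map $t\mapsto \nabla f(x+tv)$ is differentiable at $t=0$ with derivative $\nabla^2 f(x)\,v$, so that
$$
v^T \nabla^2 f(x)\,v = \lim_{t\to 0}\frac{v^T\bigl(\nabla f(x+tv)-\nabla f(x)\bigr)}{t}.
$$
I would first note that since $\mathcal{U}$ is open and $x\in\mathcal{U}$, the point $x+tv$ lies in $\mathcal{U}$ for all sufficiently small $|t|$, so Assumption~\ref{ass:smoothness}\ref{ass:L1} is applicable along this segment. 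Then, by the Cauchy--Schwarz inequality followed by the Lipschitz estimate,
$$
\bigl|v^T\bigl(\nabla f(x+tv)-\nabla f(x)\bigr)\bigr| \le \|v\|\,\bigl\|\nabla f(x+tv)-\nabla f(x)\bigr\| \le L_1\,|t|\,\|v\|^2 = L_1\,|t|,
$$
using $\|v\|=1$. Dividing by $|t|$ and passing to the limit yields $|v^T\nabla^2 f(x)\,v|\le L_1$, and in particular $v^T\nabla^2 f(x)\,v\le L_1$.

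Taking the maximum over all unit vectors $v$ then gives $\lambda_{\max}(\nabla^2 f(x))\le L_1$, as desired. I do not anticipate a genuine obstacle here: the only point requiring a little care is the justification that $x+tv\in\mathcal{U}$ for small $|t|$ (so that the Lipschitz inequality may be invoked), which follows immediately from the openness of $\mathcal{U}$ and the fact that $x\in{\rm dom}(g)\subseteq\mathcal{U}$. Note also that convexity of $f$ is not even needed for the upper bound, since it emerges directly from the absolute-value estimate; the inequality holds by virtue of Assumption~\ref{ass:smoothness}\ref{ass:L1} alone.
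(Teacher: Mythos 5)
Your proof is correct and is essentially the standard argument the paper defers to via its citation of Nesterov's Lemma 1.2.2: bound the quadratic form $v^T\nabla^2 f(x)\,v$ by differentiating the gradient along the direction $v$ and applying the Lipschitz estimate, then maximize over unit vectors. The points you flag (openness of $\mathcal{U}$ justifying $x+tv\in\mathcal{U}$ for small $|t|$, and convexity being unnecessary for the upper bound) are both handled correctly.
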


\subsection{Optimality Conditions and Residual Functions}\label{sec:opt-resi-map}
We now introduce the optimality condition of problem~\eqref{eq:basic-prob}. Given a closed proper convex function $h:{\mathbb R}^n \rightarrow (-\infty,+\infty]$, the so-called \emph{proximal operator} ${\rm prox}_h:{\mathbb R}^n \rightarrow {\mathbb R}^n$ of $h$ is defined by
$$%\begin{equation}\label{eq:def-prox-oper}
\mbox{prox}_h(v) := \argmin_{u\in\mathbb{R}^n} \left\{ \frac{1}{2}\|u - v\|^2 + h(u) \right\}.
$$%\end{equation}
The proximal operator is the building block of many first-order methods for solving problem~\eqref{eq:basic-prob}, including the proximal gradient method and its accelerated versions~\cite{parikh2014proximal} and (block) coordinate gradient descent methods~\cite{tseng2009coordinate}. 
%When the function $h$ is the indicator function of a closed convex set $C$, the proximal operator of $h$ reduces to the projection operator onto $C$. 
Moreover, the proximal operators of many non-smooth functions, such as the indicator function of a closed convex set, the $\ell_1$-norm, the grouped LASSO regularizer, the elastic net regularizer, and the nuclear norm, have closed-form representations; see, \eg,~\cite[Chapter 6]{parikh2014proximal}. 
%For example, let $h$ be the scaled $\ell_1$-norm function $\tau\|x\|_1$. Then the proximal operator of $h$ is the \emph{soft-thresholding operator}:
%\begin{equation*}
%\mbox{prox}_h(v) = \mbox{sign}(v)\odot\max\{|x|-\tau, 0\}
%\end{equation*}
%where $\mbox{sign}, \max, |\cdot|$ are entrywise operations and $\odot$ operates the entrywise product. 
%We now recall some of the useful properties of the proximal operator that will be used later. One well-known property of the proximal operator is that it characterizes the optimal solutions of \eqref{eq:basic-prob} as its fixed points. For its proof, we refer readers to Lemma 2.4 of \cite{CW05}.
As is well known, the set of optimal solutions to problem~\eqref{eq:basic-prob} can be characterized using the proximal operator. The following result, which can be found, \eg, in~\cite[Proposition 3.1]{CW05}, will play an important role in our subsequent development.
\begin{fact}\label{fact:opt-fix-pt}
A vector $x\in{\rm dom}(g)$ is an optimal solution to problem~\eqref{eq:basic-prob} if and only if for any $\tau>0$, 
$$%\begin{equation}\label{eq:opt-fix-pt}
x = {\rm prox}_{\tau g}(x - \tau\nabla f(x)). 
$$%\end{equation}
\end{fact}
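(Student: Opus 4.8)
The plan is to reduce both the optimality condition for problem~\eqref{eq:basic-prob} and the fixed-point equation $x = {\rm prox}_{\tau g}(x - \tau\nabla f(x))$ to the single inclusion $-\nabla f(x) \in \partial g(x)$, and then chain the two equivalences together. Both directions of the claimed ``if and only if'' will follow at once from this common reformulation, and the role of $\tau$ will drop out because the intermediate inclusion does not depend on it.

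First I would handle the optimality side. Since $F = f + g$ is a closed proper convex function, a point $x \in {\rm dom}(g)$ is optimal for~\eqref{eq:basic-prob} if and only if $0 \in \partial F(x)$. Because $f$ is continuously differentiable on an open set $\mathcal{U}$ containing ${\rm dom}(g)$, it is finite and continuous there, which supplies the constraint qualification needed for the Moreau--Rockafellar sum rule; thus $\partial F(x) = \nabla f(x) + \partial g(x)$ for every $x \in {\rm dom}(g)$. Hence optimality of $x$ is equivalent to $-\nabla f(x) \in \partial g(x)$.

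Next I would treat the proximal side. For any fixed $\tau > 0$ and any $v$, the map $u \mapsto \frac{1}{2}\|u - v\|^2 + \tau g(u)$ is strongly convex, so ${\rm prox}_{\tau g}(v)$ is well defined as its unique minimizer, characterized by the first-order condition $0 \in (u - v) + \tau\partial g(u)$, i.e.\ $v - u \in \tau\partial g(u)$. Specializing to $v = x - \tau\nabla f(x)$ and testing whether $u = x$ is the minimizer, the equation $x = {\rm prox}_{\tau g}(x - \tau\nabla f(x))$ holds precisely when $(x - \tau\nabla f(x)) - x \in \tau\partial g(x)$, which simplifies to $-\nabla f(x) \in \partial g(x)$ upon dividing by $\tau > 0$.

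Combining the two equivalences shows that $x$ is optimal for~\eqref{eq:basic-prob} if and only if $x = {\rm prox}_{\tau g}(x - \tau\nabla f(x))$, and since the reformulation $-\nabla f(x) \in \partial g(x)$ is independent of $\tau$, the fixed-point identity holds simultaneously for every $\tau > 0$, as claimed. The only step that requires genuine care is the validity of the subdifferential sum rule $\partial(f+g) = \nabla f + \partial g$; I expect this to be the main (though mild) obstacle, and it is resolved by the continuity of $f$ on the open neighborhood $\mathcal{U}$ of ${\rm dom}(g)$ guaranteed by Assumption~\ref{ass:smoothness}. Everything else is a direct manipulation of the two first-order optimality conditions.
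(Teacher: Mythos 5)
Your argument is correct and is exactly the standard one: the paper itself gives no proof of this Fact but simply cites~\cite[Proposition 3.1]{CW05}, and the proof there proceeds by the same reduction of both conditions to the $\tau$-independent inclusion $-\nabla f(x)\in\partial g(x)$, using the first-order characterization of the proximal minimizer on one side and the sum rule $\partial(f+g)=\nabla f+\partial g$ (justified by the continuity of $f$ on an open set containing ${\rm dom}(g)$) on the other. Nothing is missing.
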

\noindent Let $R:\mbox{dom}(g)\rightarrow\mathbb{R}^n$ be the map given by
\begin{equation}\label{eq:def-resi-map}
R(x) := x - {\rm prox}_g(x - \nabla f(x)).
\end{equation}
Fact~\ref{fact:opt-fix-pt} suggests that we can take $r(x) = \|R(x)\|$ as a \emph{residual function} for problem~\eqref{eq:basic-prob}; \ie, the function $r$ satisfies $r(x)\geq 0$ for all $x\in\mbox{dom}(g)$ and $r(x) = 0$ if and only if $x\in\mathcal{X}$. In addition, the following proposition shows that both $R$ and $r$ are Lipschitz continuous on ${\rm dom}(g)$ if Assumption~\ref{ass:smoothness}\ref{ass:L1} holds.
\begin{prop}\label{prop:lip-resi-map}
Suppose that Assumption \ref{ass:smoothness}\ref{ass:L1} holds. Then, for any $y,z\in{\rm dom}(g)$, we have
\begin{equation*}
|r(y) - r(z)| \leq \|R(y) - R(z)\| \leq (L_1 + 2)\|y - z\|.
\end{equation*}
\end{prop}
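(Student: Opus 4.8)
The plan is to treat the two inequalities separately, the first being essentially immediate and the second relying on a single classical property of the proximal operator. For the left inequality, I would simply invoke the reverse triangle inequality: since $r(x) = \|R(x)\|$ by definition, we have
\[
|r(y) - r(z)| = \bigl|\,\|R(y)\| - \|R(z)\|\,\bigr| \le \|R(y) - R(z)\|,
\]
which needs nothing beyond the definition of $r$.

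For the right inequality, the key step is to decompose $R(y) - R(z)$ via the definition~\eqref{eq:def-resi-map} and bound each piece. Writing
\[
R(y) - R(z) = (y - z) - \bigl( {\rm prox}_g(y - \nabla f(y)) - {\rm prox}_g(z - \nabla f(z)) \bigr),
\]
the triangle inequality reduces the task to controlling the difference of the two proximal terms. Here I would use the fact that ${\rm prox}_g$ is nonexpansive (in fact firmly nonexpansive), a standard consequence of $g$ being closed proper convex, to obtain
\[
\|{\rm prox}_g(y - \nabla f(y)) - {\rm prox}_g(z - \nabla f(z))\| \le \|(y - \nabla f(y)) - (z - \nabla f(z))\|.
\]
A further application of the triangle inequality, followed by Assumption~\ref{ass:smoothness}\ref{ass:L1} to bound $\|\nabla f(y) - \nabla f(z)\| \le L_1\|y - z\|$, estimates the right-hand side above by $(1 + L_1)\|y - z\|$. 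Combining these pieces gives $\|R(y) - R(z)\| \le \|y - z\| + (1 + L_1)\|y - z\| = (L_1 + 2)\|y - z\|$.

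The argument is entirely routine, so I do not anticipate any genuine obstacle; the only ingredient that is not purely definitional is the nonexpansiveness of ${\rm prox}_g$, which is classical and for which I would cite a standard reference on proximal operators (\eg, the monograph referenced earlier for the properties of proximal operators) to keep the proof self-contained.
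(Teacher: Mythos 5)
Your argument is correct and follows essentially the same route as the paper's proof: the reverse triangle inequality for the first bound, and for the second, the decomposition of $R(y)-R(z)$ via the definition, the non-expansiveness of ${\rm prox}_g$, and Assumption~\ref{ass:smoothness}\ref{ass:L1}, yielding the same constant $L_1+2$. No gaps.
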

\begin{proof}
The first inequality is a direct consequence of the triangle inequality. We now prove the second one. Using the definition of $R$, we compute
\begin{equation*}
\begin{split}
\|R(y) - R(z)\| & = \|y - \mbox{prox}_g(y - \nabla f(y)) - z + \mbox{prox}_g(z - \nabla f(z))\| \\
& \leq \|y - z\| + \|\mbox{prox}_g(y - \nabla f(y)) - \mbox{prox}_g(z - \nabla f(z))\| \\
& \leq 2\|y - z\| + \|\nabla f(y) - \nabla f(z)\| \\
& \leq (L_1 + 2)\|y - z\|,
\end{split}
\end{equation*}
where the second inequality follows from the non-expansiveness of the proximal operator (see, \eg,~\cite[Lemma 2.4]{CW05}) and the last is by Assumption~\ref{ass:smoothness}\ref{ass:L1}. 
\end{proof}

\subsubsection{Inner Minimization}\label{sec:opt-resi-map-im}
Recall that in the $k$-th iteration of an SQA method, one (approximately) minimizes the quadratic model $q_k$ in~\eqref{eq:quad-approx}. By letting %$f_k(x)$ be the sum of the first three terms,
\begin{equation*}
f_k(x) := f(x^k) + \nabla f(x^k)^T(x - x^k) + \frac{1}{2}(x - x^k)^TH_k(x - x^k),
\end{equation*}
the inner problem reads
\begin{equation}\label{eq:inner-min}
\min_{x\in\mathbb{R}^n} \left\{ q_k(x) := f_k(x) + g(x) \right\},
\end{equation}
which is also a convex minimization problem of the form~\eqref{eq:basic-prob} with the smooth part being quadratic. Therefore, both the optimality condition and residual function studied earlier for problem \eqref{eq:basic-prob} can be adapted to the inner problem~\eqref{eq:inner-min}. The following corollary is immediate from Fact~\ref{fact:opt-fix-pt} and the fact that $\nabla f_k(x) = \nabla f(x^k) + H_k(x - x^k)$.
\begin{coro}\label{cor:opt-fix-pt-im}
A vector $x\in{\rm dom}(g)$ is an optimal solution to problem~\eqref{eq:inner-min} if and only if for any $\tau>0$, 
\begin{equation*}
x = {\rm prox}_{\tau g}(x - \tau\nabla f_k(x) )= {\rm prox}_{\tau g}\left( (I - \tau H_k)x - \tau(\nabla f(x^k) - H_k x^k) \right).
\end{equation*}
\end{coro}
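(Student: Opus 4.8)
The plan is to apply Fact~\ref{fact:opt-fix-pt} directly to the inner problem~\eqref{eq:inner-min}, viewing it as an instance of the generic problem~\eqref{eq:basic-prob} in which the smooth part $f$ is replaced by $f_k$ and the non-smooth part $g$ is left unchanged. The only point that genuinely requires verification is that $f_k$ satisfies the standing hypotheses imposed on the smooth term, so that the fixed-point characterization in Fact~\ref{fact:opt-fix-pt} is legitimately available for~\eqref{eq:inner-min}.

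First I would check this regularity. Since $H_k$ is positive definite, $f_k$ is a strictly convex quadratic; in particular it is a finite-valued, closed, proper convex function that is twice continuously differentiable on all of $\mathbb{R}^n$, hence on any open set containing ${\rm dom}(g)$. Thus the hypotheses underlying Fact~\ref{fact:opt-fix-pt} hold for the pair $(f_k,g)$, and applying that fact yields: $x\in{\rm dom}(g)$ is optimal for~\eqref{eq:inner-min} if and only if $x = {\rm prox}_{\tau g}(x - \tau\nabla f_k(x))$ for any $\tau>0$, which is the first equality in the statement.

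It then remains to rewrite the argument of the proximal operator. Differentiating the quadratic $f_k$ gives $\nabla f_k(x) = \nabla f(x^k) + H_k(x - x^k)$, and substituting this into $x - \tau\nabla f_k(x)$ and collecting the terms linear in $x$ produces
$$
x - \tau\nabla f_k(x) = (I - \tau H_k)x - \tau\left( \nabla f(x^k) - H_k x^k \right),
$$
which is exactly the second expression in the statement.

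As for the main difficulty: there is essentially none, and this is precisely why the result is stated as a corollary. The entire content is the observation that $f_k$ is a positive-definite quadratic, so Fact~\ref{fact:opt-fix-pt} transfers to~\eqref{eq:inner-min} without modification, and the second equality is a one-line algebraic rearrangement. The only conceivable pitfall is neglecting to confirm that $f_k$ inherits the smoothness and convexity needed to invoke Fact~\ref{fact:opt-fix-pt}, which the positive definiteness of $H_k$ immediately supplies.
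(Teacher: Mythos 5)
Your proposal is correct and matches the paper's reasoning: the paper likewise obtains this corollary by applying Fact~\ref{fact:opt-fix-pt} to the inner problem~\eqref{eq:inner-min} (whose smooth part is the convex quadratic $f_k$) and then substituting $\nabla f_k(x) = \nabla f(x^k) + H_k(x - x^k)$. Your additional verification that $f_k$ satisfies the standing hypotheses is a sensible, if routine, elaboration of what the paper leaves implicit.
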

\noindent Similar to the map $R$ in~\eqref{eq:def-resi-map}, we define the map $R_k:\mbox{dom}(g) \rightarrow\mathbb{R}^n$ by
$$%\begin{equation}\label{eq:def-resi-map-im}
R_k(x) :=x - {\rm prox}_g(x - \nabla f_k(x) ) = x - {\rm prox}_g\left( (I - H_k)x - (\nabla f(x^k) - H_k x^k) \right).
$$%\end{equation}
Furthermore, parallel to the residual function $r$ defined for problem~\eqref{eq:basic-prob}, we can define the residual function for problem~\eqref{eq:inner-min} as $r_k(x) = \|R_k(x)\|$. Now, following the lines of the proof of Proposition \ref{prop:lip-resi-map}, we can easily show that both $R_k$ and $r_k$ are Lipschitz continuous. 
\begin{coro}\label{cor:lip-resi-map-im}
For any $y,z\in\mathbb{R}^n$, we have\footnote{Note that Assumption \ref{ass:smoothness}\ref{ass:L1} is not required for Corollary~\ref{cor:lip-resi-map-im} to hold; \cf~Proposition~\ref{prop:lip-resi-map}.}
$$ |r_k(y) - r_k(z)| \leq \|R_k(y) - R_k(z)\| \leq (\lambda_{\max}(H_k) + 2)\|y - z\|. $$
\end{coro}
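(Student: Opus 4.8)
The plan is to imitate, essentially verbatim, the two-step argument used to prove Proposition~\ref{prop:lip-resi-map}, substituting the exact gradient map $\nabla f$ by the affine gradient map $\nabla f_k(x) = \nabla f(x^k) + H_k(x - x^k)$ of the inner model. The first inequality, $|r_k(y) - r_k(z)| \le \|R_k(y) - R_k(z)\|$, follows immediately from the definition $r_k = \|R_k\|$ together with the reverse triangle inequality $\big|\,\|a\| - \|b\|\,\big| \le \|a - b\|$, exactly as in the proposition; no structure of $R_k$ beyond its definition is used here.

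For the second inequality I would expand $\|R_k(y) - R_k(z)\|$ using the definition of $R_k$, peel off the identity terms with the triangle inequality to obtain a bound of the form $\|y - z\| + \|\mbox{prox}_g(y - \nabla f_k(y)) - \mbox{prox}_g(z - \nabla f_k(z))\|$, and then invoke the non-expansiveness of the proximal operator (the same fact cited in the proof of Proposition~\ref{prop:lip-resi-map}) to drop the two occurrences of $\mbox{prox}_g$ and replace the second term by $\|(y - \nabla f_k(y)) - (z - \nabla f_k(z))\|$. A further application of the triangle inequality then separates this quantity into $\|y - z\| + \|\nabla f_k(y) - \nabla f_k(z)\|$.

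The one place where the inner model differs from problem~\eqref{eq:basic-prob}---and the only real content of the corollary---is the estimate of $\|\nabla f_k(y) - \nabla f_k(z)\|$. Because $\nabla f_k$ is affine with linear part $H_k$, one has the \emph{exact} identity $\nabla f_k(y) - \nabla f_k(z) = H_k(y - z)$, whence $\|\nabla f_k(y) - \nabla f_k(z)\| \le \lambda_{\max}(H_k)\|y - z\|$, the operator norm of the symmetric positive definite matrix $H_k$ being equal to its largest eigenvalue. Assembling the pieces produces the constant $\lambda_{\max}(H_k) + 2$. I anticipate no genuine obstacle here: the argument is a routine adaptation, and the only subtlety worth flagging (as the footnote already does) is that Assumption~\ref{ass:smoothness}\ref{ass:L1} plays no role whatsoever, since the Lipschitz constant of the affine map $\nabla f_k$ is $\lambda_{\max}(H_k)$ unconditionally. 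For this reason the bound holds on all of $\mathbb{R}^n$, rather than merely on $\mbox{dom}(g)$ as in Proposition~\ref{prop:lip-resi-map}.
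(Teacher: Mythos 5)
Your proposal is correct and follows exactly the route the paper intends: the paper simply states that the corollary follows "along the lines of the proof of Proposition~\ref{prop:lip-resi-map}," and your adaptation---reverse triangle inequality, peeling off identity terms, non-expansiveness of $\mathrm{prox}_g$, and replacing the Lipschitz bound on $\nabla f$ by the exact identity $\nabla f_k(y)-\nabla f_k(z)=H_k(y-z)$ with $\|H_k\|=\lambda_{\max}(H_k)$---is precisely that argument. Your observation that Assumption~\ref{ass:smoothness}\ref{ass:L1} is not needed matches the paper's own footnote.
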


\subsection{The Luo-Tseng Error Bound Property}\label{sec:error-bound}
A prevailing assumption in existing convergence analyses of SQA methods for solving problem \eqref{eq:basic-prob} is the strong convexity of the smooth function $f$~\cite{LSS14,BNO16}. However, such assumption is invalid in many applications (see the discussion in Section \ref{sec:intro}). Instead of assuming strong convexity, our analysis of \textsf{IRPN} is based on the following local EB property.
\begin{ass}\label{ass:ass-eb}
{\bf (Luo-Tseng EB Property)} Let $r$ be the residual function defined in Section \ref{sec:opt-resi-map}. For any $\zeta\geq F^{*}$, there exist scalars $\kappa>0$ and $\epsilon>0$ such that 
\begin{equation}\label{eq:def-eb}
{\rm dist}(x,\mathcal{X}) \leq \kappa\cdot r(x) \quad \mbox{whenever} \; F(x)\leq \zeta \mbox{ and } r(x)\leq \epsilon. 
\end{equation}
\end{ass}
\noindent EBs have long been an important topic and permeate in all aspects of mathematical programming~\cite{P97,FP03}. The Luo-Tseng EB property~\eqref{eq:def-eb} for problem~\eqref{eq:basic-prob} was studied and popularized in a series of papers by Luo and Tseng~\cite{luo1992error,LT92,LT93}.\footnote{A similar EB property has been studied by Pang~\cite{pang1987posteriori} for linearly constrained variational inequalities.} In particular, many useful subclasses of problem \eqref{eq:basic-prob} have been shown to possess the Luo-Tseng EB property. The early results (see, \eg,~\cite{LT93} and the references therein) mainly focus on the case where the non-smooth function $g$ has a polyhedral epigraph. It is only recently that the validity of the Luo-Tseng EB is established for instances of problem~\eqref{eq:basic-prob} with a non-polyhedral $g$~\cite{tseng2010approximation,ZZS15} or even with a non-polyhedral optimal solution set~\cite{ZS15}. We briefly summarize these results below and refer the readers to the recent work of Zhou and So~\cite{ZS15} for a more detailed review of the developments of the Luo-Tseng EB. 
\begin{fact}\label{fact:eb-holds}
For problem \eqref{eq:basic-prob},  the Luo-Tseng EB (Assumption \ref{ass:ass-eb}) holds in any of the following scenarios:
\begin{enumerate}[wide = 0pt, labelwidth = 2em, labelsep*=0em, itemindent = 0pt, leftmargin = \dimexpr\labelwidth + \labelsep\relax, noitemsep,topsep = 1ex, font=\normalfont, label={(S\arabic*)}]
\item (\cite[Theorem 4]{tseng2009coordinate}; \cf~\cite[Theorem 3.1]{pang1987posteriori}) $f$ is strongly convex, $\nabla f$ is Lipschitz continuous, and $g$ is a closed proper convex function.

\item (\cite[Lemma 7]{tseng2009coordinate}; \cf~\cite[Theorem 2.1]{LT92}) $f$ takes the form $f(x) = h(Ax) + \langle c,x\rangle$, where $A\in\mathbb{R}^{m\times n}$ and $c\in\mathbb{R}^n$ are given, $h:\R^m\limto(-\infty,+\infty)$ is a continuously differentiable function with $h$ being strongly convex and $\nabla h$ being Lipschitz continuous on any compact convex subset of $\mathbb{R}^n$, and $g$ has a polyhedral epigraph.

%\item[(S3).] (\cite[Theorem 4.1]{luo1993convergence}) $\mathcal{E}=\R^n$, $f$ takes the form $f(x)=\max_{y\in Y} \left\{ \langle y,Ax \rangle - h(y) \right\} + \langle c,x \rangle$, where $A\in\R^{m\times n}$, $c\in\R^n$, and $h:\R^m\limto(-\infty,\infty]$ are as in scenario (S2), $Y\subseteq\R^m$ is polyhedral, and $P$ has a polyhedral epigraph.

\item (\cite[Corollaries 1 and 2]{ZZS15}; \cf~\cite[Theorem 2]{tseng2010approximation} and~\cite[Theorem 1]{ZJL13}) $f$ takes the form $f(x)=h(Ax)$, where $A\in\R^{m\times n}$, $h:\R^m\limto(-\infty,+\infty)$ are as in scenario (S2), and $g$ is the $\ell_{1,p}$-norm regularizer with $p\in[1,2]\cup\{+\infty\}$ (\ie, $g(x) = \sum_{J\in\mathcal{J}}\omega_J\|x_J\|_p$, where $\mathcal{J}$ is a partition of the index set $\{1,\ldots,n\}$, $x_J\in\R^{|J|}$ is the sub-vector obtained by restricting $x\in\R^n$ to the entries in $J\in\mathcal{J}$, and $\omega_J\ge0$ is a given parameter; see~\cite{ZZS15} and the references therein for some background on the $\ell_{1,p}$-norm regularizer).

\item (\cite[Proposition 12]{ZS15}) $f$ takes the form $f(X) = h(\mathcal{A}(X)) + \langle C,X \rangle$, where $\mathcal{A}:\mathbb{R}^{n\times p} \rightarrow\mathbb{R}^m$ is a linear mapping, $C\in\mathbb{R}^{n\times p}$ is a given matrix, $h:\mathbb{R}^m\rightarrow(-\infty,+\infty)$ is as in scenario (S2), $g$ is the nuclear norm regularizer (\ie, $g(X)$ equals the sum of all the singular values of $X$; see~\cite{ZS15} and the references therein for some background on the nuclear norm regularizer), and there exists an $X^{*}\in\mathcal{X}$ such that the following strict complementary-type condition holds (here, ${\rm ri}$ denotes the relative interior):
$$ \mathbf{0} \in \nabla f(X^{*}) + {\rm ri}(\partial g(X^{*})). $$
%where $\mbox{ri}$ denotes the relative interior. 
\end{enumerate}
\end{fact}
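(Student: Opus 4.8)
The plan is to verify the Luo--Tseng EB property separately in each of the four scenarios, since they rest on quite different structural features of $F$. Throughout I would work with the forward--backward residual $R(x) = x - \mathrm{prox}_g(x - \nabla f(x))$, exploiting the standard fact that residuals built from different proximal step sizes $\tau$ are equivalent up to multiplicative constants depending on $\tau$ and $L_1$; this lets me pass freely between $r(x) = \|R(x)\|$ and whatever step-size variant is most convenient in a given scenario.

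Scenario (S1) is essentially elementary. When $f$ is strongly convex the whole objective $F$ is strongly convex, so $\mathcal{X} = \{x^*\}$ is a singleton and, for an appropriate step size, the prox-gradient map $T(x) = \mathrm{prox}_{\tau g}(x - \tau\nabla f(x))$ is a contraction with some modulus $c < 1$. Writing $x - x^* = R(x) + (T(x) - T(x^*))$ and using $\|T(x) - T(x^*)\| \leq c\|x - x^*\|$ yields $\mathrm{dist}(x,\mathcal{X}) = \|x - x^*\| \leq (1-c)^{-1} r(x)$, a \emph{global} bound requiring no $\zeta$ or $\epsilon$ restriction. Here $\kappa = (1-c)^{-1}$ and $\epsilon$ may be taken arbitrarily large.

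The genuinely hard part is Scenario (S2), the classical Luo--Tseng polyhedral case, which serves as the template for the rest. The central observation is that strong convexity of $h$ forces $Ax$ to take a common value $t^* = Ax^*$ over all $x^* \in \mathcal{X}$, and likewise $\langle c, x\rangle$ is constant on $\mathcal{X}$; together with the stationarity inclusion this makes $\mathcal{X}$ polyhedral. The argument then splits into two estimates: first, a control of $\|Ax - t^*\|$ by the residual, coming from the local strong convexity of $h$ on the compact sublevel set $\{F \leq \zeta\}$; and second, a Hoffman-type error bound for the polyhedral description of $\mathcal{X}$, which converts the linear-image proximity back into $\mathrm{dist}(x,\mathcal{X})$. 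The sublevel-set restriction supplies the compactness needed for the strong-convexity constant of $h$, while $r(x) \leq \epsilon$ keeps $x$ near $\mathcal{X}$ so that the relevant active-set combinatorics are those valid in the limit. Stitching the two estimates together, with the polyhedral epigraph of $g$ ensuring that stationarity contributes only finitely many linear pieces, is the delicate bookkeeping step, and I expect this to be the main obstacle.

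Scenarios (S3) and (S4) follow the same two-stage philosophy but must replace the Hoffman step, which is unavailable once $g$ is non-polyhedral. For the $\ell_{1,p}$ regularizer in (S3), I would analyze the block-separable structure of $\mathrm{prox}_g$ and establish a tailored piecewise error bound per group, exploiting that $p \in [1,2]\cup\{+\infty\}$ is exactly the range in which the needed curvature estimates hold; for the nuclear norm in (S4) the analogous step is spectral, and the strict complementarity inclusion $\mathbf{0} \in \nabla f(X^*) + \mathrm{ri}(\partial g(X^*))$ is precisely what guarantees that the rank pattern stabilizes near $\mathcal{X}$ and that a Lipschitz selection of the associated singular subspaces exists. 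The reason these are recent results rather than corollaries of (S2) is exactly the loss of the polyhedral Hoffman bound: one must instead prove a bespoke error bound for a non-polyhedral optimal set, and in (S4) this is where strict complementarity does the essential work and cannot be dropped.
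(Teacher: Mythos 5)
The paper does not prove this statement: it is stated as a \emph{Fact}, i.e., a catalogue of results imported verbatim from the literature, with each scenario attributed to a specific external reference (Theorem 4 and Lemma 7 of Tseng--Yun, Corollaries 1 and 2 of Zhou--Zhang--So, Proposition 12 of Zhou--So). So there is no internal proof to compare against; what you have written is an attempted reconstruction of those external proofs, and it should be judged on its own completeness.

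Your argument for (S1) is correct and essentially self-contained: the contraction property of the prox-gradient map under strong convexity of $f$ and Lipschitz continuity of $\nabla f$, together with the decomposition $x - x^* = R_\tau(x) + (T(x) - T(x^*))$ and the standard equivalence of residuals across step sizes, does yield a global error bound. For (S2)--(S4), however, there are genuine gaps. In (S2) you correctly identify the skeleton (constancy of $Ax$ and of $\nabla f$ on $\mathcal{X}$, polyhedrality of $\mathcal{X}$, a Hoffman bound), but the step you defer as ``delicate bookkeeping'' is in fact the entire content of the Luo--Tseng theorem: the Hoffman bound expresses ${\rm dist}(x,\mathcal{X})$ in terms of $\|Ax - t^*\|$ \emph{plus} the distance from $x$ to the polyhedral set $\{y : -\nabla f(x^*) \in \partial g(y)\}$, and controlling that second term by $r(x)$ is not a direct computation --- the original proof proceeds by contradiction along sequences, using the local upper Lipschitz continuity of polyhedral multifunctions, and this is where $r(x)\le\epsilon$ and $F(x)\le\zeta$ actually enter. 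For (S3) the phrase ``tailored piecewise error bound per group'' does not constitute an argument: for $p\in(1,2]$ the regularizer is non-polyhedral, the Hoffman machinery is unavailable, and the cited proofs require a substantial dual/active-group analysis that you have not supplied. Similarly for (S4), you correctly locate the role of strict complementarity (stabilization of the rank and of the singular subspaces near $\mathcal{X}$), but the matrix-analytic error bound that replaces the Hoffman step is the theorem itself, not a routine adaptation. In short: (S1) is proved; (S2)--(S4) are accurate road maps of where the difficulty lies, but the difficult steps are named rather than carried out, so as a proof the proposal is incomplete --- which is consistent with the paper's own choice to cite these results rather than prove them.
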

\noindent Note that in scenarios (S2)--(S4), the assumptions on $h$ are the same and can readily be shown to be satisfied by 
$ h(y) = \frac{1}{2}\|y - b\|^2$,
which corresponds to least squares regression, and
$ h(y) = \sum_{i=1}^m\log(1 + e^{-b_iy_i})$ with $b\in\{-1,1\}^m$,
which corresponds to logistic regression. The assumptions on $h$ are also satisfied by loss functions that arise in maximum likelihood estimation (MLE) for learning conditional random fields~\cite{ZYDR14} and MLE under Poisson noise~\cite{SAT04}. It follows from Fact~\ref{fact:eb-holds} that many problems of the form \eqref{eq:basic-prob} possess the Luo-Tseng EB property, even though they may not be strongly convex.

Prior to this work, the Luo-Tseng EB property \eqref{eq:def-eb} has been used to establish the local linear convergence of a number of first-order methods for solving problem~\eqref{eq:basic-prob}; see the discussion in Section~\ref{sec:intro}.
%such as the projected gradient method, proximal gradient method, coordinate minimization, and coordinate gradient descent method; see~\cite{LT93,tseng2010approximation,tseng2009coordinate} and the references therein. 
It has also been used to prove that certain primal-dual interior-point path following methods converge superlinearly~\cite{tseng2000error}.  However, all the methods in question are quite different in nature from the SQA methods considered in this paper. In Section~\ref{sec:convergence-analysis}, we show how the Luo-Tseng EB property \eqref{eq:def-eb} can be used to establish the local superlinear convergence of our proposed \textsf{IRPN}, thus further demonstrating its versatility in convergence analysis. 

\subsection{Other Regularity Conditions}
Besides the Luo-Tseng EB property~\eqref{eq:def-eb}, other regularity conditions have been used to establish the superlinear convergence of SQA methods for solving problem \eqref{eq:basic-prob}. Let us briefly review some of those conditions here and explain why they are too stringent for the scenarios that we are interested in.

Yen et al.~\cite{YHRD14} and Zhong et al.~\cite{ZYDR14} introduced the \emph{constant nullspace strong convexity} (CNSC) property of a smooth function. They showed that when the smooth function $f$ possesses the CNSC property and the non-smooth function $g$ satisfies some other regularity conditions, the proximal Newton and proximal quasi-Newton methods in~\cite{LSS14} converge quadratically and superlinearly, respectively. Nonetheless, their convergence results are different from those obtained in this paper. Indeed, let $\{x^k\}_{k\ge0}$ be the sequence of iterates generated by the above methods and $z^k$ be the projection of $x^k$ onto the subspace associated with the CNSC property. The results in~\cite{YHRD14,ZYDR14} imply that the sequence $\{z^k\}_{k\ge0}$ converges to a point $z^*$ quadratically or superlinearly. However, the convergence rate of $\{x^k\}_{k\ge0}$ remains unclear. In fact, there is no guarantee that $\{x^k\}_{k\ge 0}$ or $\{z^k\}_{k\ge0}$ converges to an optimal solution to the problem. We also remark that neither the inexactness of the inner minimization nor the global convergence of the methods is addressed in \cite{YHRD14,ZYDR14}. 

Dontchev and Rockafellar~\cite{dontchev2009implicit} developed a framework for solving generalized equations, which coincides with the standard SQA method when specialized to the optimality condition $\mathbf{0}\in \nabla f(x) + \partial g(x)$ of problem \eqref{eq:basic-prob}. The corresponding convergence results require the set-valued mapping $\nabla f + \partial g$ to be either \emph{metrically regular} or \emph{strongly metrically sub-regular}; see~\cite[Chapter 6C]{dontchev2009implicit}. However, both of these regularity conditions are provably more restrictive than the Luo-Tseng EB property~\eqref{eq:def-eb}. In particular, they require that the problem at hand has a unique optimal solution, which is not satisfied by many instances of problem~\eqref{eq:basic-prob} that are of interest. For example, consider the following two-dimensional $\ell_1$-regularized least squares regression problem:
\begin{equation*}
\min_{x_1,x_2\in\mathbb{R}} \left\{ \frac{1}{2}(x_1 + x_2 - 2)^2 + |x_1| + |x_2| \right\}.
\end{equation*}
This problem possesses the Luo-Tseng EB property~\eqref{eq:def-eb}, as it belongs to scenario (S2) in Fact~\ref{fact:eb-holds}. However, it can be verified that the set-valued mapping associated with the optimality condition of this problem is neither metrically regular nor strongly metrically sub-regular.

\section{The Inexact Regularized Proximal Newton Method}\label{sec:algorithm}
We now describe in detail our proposed algorithm---the inexact regularized proximal Newton (\textsf{IRPN}) method. The algorithm takes as input an initial iterate $x^0\in\mathbb{R}^n$, a constant $\epsilon_0>0$ that controls the solution precision, constants $\theta\in(0,1/2)$, $\zeta\in(\theta,1/2)$, $\eta\in(0,1)$ that are used to specify the inexactness condition and  line-search parameters, and constants $c>0$, $\rho\in[0,1]$ that are used to form the regularized Hessians $\{H_k\}_{k\ge0}$. As we will see in Section \ref{sec:convergence-analysis}, the local convergence rate of \textsf{IRPN} largely depends on the choice of $\rho$. 

At the current iterate $x^k$, we first construct a quadratic approximation of $F$ at $x^k$ by
$$ q_k(x) := f(x^k) + \nabla f(x^k)^T(x - x^k) + \frac{1}{2}(x - x^k)^TH_k(x - x^k) + g(x), $$
where $H_k = \nabla^2 f(x^k) + \mu_kI$ with $\mu_k = c\cdot r(x^k)^{\rho}$ and $r$ is the residual function defined in Section~\ref{sec:opt-resi-map} for problem~\eqref{eq:basic-prob}. Since $f$ is convex and $\mu_k>0$, the matrix $H_k$ is positive definite for all $k$. Hence, the quadratic model $q_k$ is strongly convex and has a unique minimizer. However, since the exact minimizer does not admit a closed-form expression in most cases, an iterative algorithm, such as a coordinate minimization method, the coordinate gradient descent method, or the accelerated proximal gradient method, is typically called to find an \emph{approximate} minimizer $\hat{x}^{k+1}$ of the quadratic model $q_k$. To ensure that the method has the desired convergence properties, we require the vector $\hat{x}^{k+1}$ to satisfy
\begin{subequations} \label{eq:inexact-cond}
\begin{eqnarray}
r_k(\hat{x}^{k+1}) &\leq& \eta\cdot \min\{r(x^k), r(x^k)^{1+\rho}\}, \label{eq:inexact-cond-res} \\
\noalign{\smallskip}
q_k(\hat{x}^{k+1}) - q_k(x^k) &\leq& \zeta \left( \ell_k(\hat{x}^{k+1}) - \ell_k(x^k) \right), \label{eq:inexact-cond-quad}
\end{eqnarray}
\end{subequations}
% an \emph{approximate} minimizer 
%\begin{equation}\label{eq:inexact-cond}
%r_k(\hat{x}^{k+1}) \leq \eta\mu_k\|\hat{x}^{k+1} - x^k\|, 
%\end{equation}
where $r_k$ is the residual function defined in Section \ref{sec:opt-resi-map-im} for the inner problem~\eqref{eq:inner-min} and $\ell_k$ is the first-order approximation of $F$ at $x^k$:
$$ \ell_k(x) := f(x^k) + \nabla f(x^k)^T(x - x^k) + g(x). $$
We will show in Lemma~\ref{lem:exist-hatx} that the inexactness condition~\eqref{eq:inexact-cond} can be satisfied by vectors that are sufficiently close to the exact minimizer of $q_k$. After finding the approximate minimizer $\hat{x}^{k+1}$, we perform a backtracking line search along the direction $d^k := \hat{x}^{k+1} - x^k$ to obtain a step size $\alpha_k>0$ that can guarantee a sufficient decrease in the objective value. The algorithm then steps into the next iterate by setting $x^{k+1} := x^k + \alpha_k d^k$. Finally, we terminate the algorithm when $r(x^k)$ is less than the prescribed precision $\epsilon_0$. We summarize the details of \textsf{IRPN} in Algorithm~\ref{alg:IRPN}.

\begin{algorithm}
\begin{algorithmic}[1]
\STATE \textbf{Input:} initial iterate $x^0\in\mathbb{R}^n$, constants $\epsilon_0>0$, $\theta\in(0,1/2)$, $\zeta\in(\theta,1/2)$, $\eta\in(0,1)$, $c>0$, $\rho\in[0,1]$, and $\beta\in(0,1)$.
%\STATE \textbf{Initialize:} Arbitrary points $x^0, x^{-1}, \ldots, x^{-K}\in\mathbb{R}^n$
  \FOR[\texttt{outer iteration}]{$k = 0,1,2,\ldots$} 
  \STATE compute the value of the residue $r(x^k)$
  \STATE if $r(x^k)\leq \epsilon_0$, terminate the algorithm and return $x^k$
  \STATE form the quadratic model $q_k$ with $H_k = \nabla^2 f(x^k) + \mu_kI$ and $\mu_k = c\cdot r(x^k)^{\rho}$
	\smallskip
  \STATE \COMMENT{\texttt{inner iteration}} call an inner solver and find an approximate minimizer $\hat{x}^{k+1}$ of $q_k$ that satisfies 
\begin{equation*}
r_k(\hat{x}^{k+1}) \leq \eta\cdot \min\{r(x^k), r(x^k)^{1+\rho}\} \mbox{ and } q_k(\hat{x}^{k+1}) - q_k(x^k)\leq \zeta \left( \ell_k(\hat{x}^{k+1}) - \ell_k(x^k) \right) \tag{\ref{eq:inexact-cond}}
\end{equation*}  

  \STATE set the search direction $d^k := \hat{x}^{k+1} - x^k$ and find the smallest integer $i\ge0$ such that
 \begin{equation}\label{eq:descent-cond}
 F(x^k) - F(x^k + \beta^i d^k) \geq \theta \left( \ell_k(x^k) - \ell_k(x^k+\beta^i d^k) \right)
 \end{equation}

  \STATE set the step size $\alpha_k = \beta^i$ and the next iterate $x^{k+1} = x^k + \alpha_k d^k$
\ENDFOR
\end{algorithmic}
\caption{Inexact Regularized Proximal Newton (\textsf{IRPN}) Method}
\label{alg:IRPN}
\end{algorithm}

The inexactness condition~\eqref{eq:inexact-cond} is similar to that proposed in~\cite{BNO16}. However, the analysis in~\cite{BNO16} cannot be applied to study the convergence behavior of \textsf{IRPN}. Indeed, the global convergence analysis therein requires that $H_k\succeq \lambda I$ with $\lambda>0$ for all $k$, while the local convergence rate analysis requires $\nabla^2 f$ to be positive definite at the limit point $x^{*}$ of the sequence $\{x^k\}_{k\geq 0}$. 
%Since we do not assume that the smooth function $f$ is strongly convex, 
However, for many of the instances of problem~\eqref{eq:basic-prob} that we are interested in, neither of these requirements are guaranteed to be satisfied.

One advantage of our proposed \textsf{IRPN} lies in its flexibility. Indeed, for SQA methods that use regularized Hessians, it is typical to let the regularization parameter $\mu_k$ be of the same order as the residue $r(x^k)$~\cite{LFQY04,QS06}; \ie, $\mu_k=c\cdot r(x^k)$. By contrast, we can adjust the order of $\mu_k$ according to the parameter $\rho$. Therefore, \textsf{IRPN} is a tunable family of algorithms parametrized by $\rho\in[0,1]$. As we will see in Sections~\ref{sec:convergence-analysis} and~\ref{sec:numerical}, the parameter $\rho$ plays a dominant role in the local convergence rate of \textsf{IRPN}. Even more, \textsf{IRPN} allows one to choose the inner solver. With the objective being the sum of a strongly convex quadratic function and a non-smooth convex function, each inner problem is a well-structured convex minimization problem. Hence, one can exploit the structure of $\nabla^2 f$ and $g$ to design special inner solvers to speed up the inner minimization.

Before we discuss the convergence properties of \textsf{IRPN}, we need to show that it is well defined. Specifically, we need to argue that the inexactness condition~\eqref{eq:inexact-cond} is always feasible and the line search procedure will terminate in a finite number of steps. We first present the following lemma, which ensures that in each iteration, the inexactness condition~\eqref{eq:inexact-cond} is satisfied by vectors that are close enough to the exact minimizer of the inner problem. The proof is identical to~\cite[Lemma 4.5]{BNO16}, but we present it here for completeness.
\begin{lemma}\label{lem:exist-hatx}
For any iteration $k$ of Algorithm~\ref{alg:IRPN}, the inexactness condition~\eqref{eq:inexact-cond} is satisfied by any vector $x$ that is sufficiently close to the exact minimizer $\tilde{x}^{k+1}$ of the quadratic model $q_k$.
\end{lemma}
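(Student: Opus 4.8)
The plan is to verify the two parts of the inexactness condition~\eqref{eq:inexact-cond} separately, exhibiting for each a ball around the exact minimizer $\tilde{x}^{k+1}$ on which it holds, and then to intersect the two balls. Throughout I write $\tilde{d} := \tilde{x}^{k+1} - x^k$. First I would record two facts available whenever iteration $k$ is reached without termination: the stopping test failed, so $r(x^k) > \epsilon_0 > 0$; and since $\nabla f_k(x^k) = \nabla f(x^k)$ we have $R_k(x^k) = R(x^k)$, hence $r_k(x^k) = r(x^k) > 0$, so $x^k$ is not the (unique) minimizer of the strongly convex $q_k$. Thus $\tilde{x}^{k+1} \ne x^k$, $\tilde{d} \ne \bz$, and $\tilde{d}^T H_k \tilde{d} > 0$ because $H_k \succ 0$.

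For the residual condition~\eqref{eq:inexact-cond-res} the argument is short. Optimality of $\tilde{x}^{k+1}$ for the inner problem and Corollary~\ref{cor:opt-fix-pt-im} give $r_k(\tilde{x}^{k+1}) = 0$, so the Lipschitz estimate of Corollary~\ref{cor:lip-resi-map-im} yields $r_k(x) = |r_k(x) - r_k(\tilde{x}^{k+1})| \le (\lambda_{\max}(H_k) + 2)\,\|x - \tilde{x}^{k+1}\|$ for every $x$. Since $r(x^k) > 0$, the target $\eta \cdot \min\{r(x^k), r(x^k)^{1+\rho}\}$ is a strictly positive constant, so \eqref{eq:inexact-cond-res} holds for all $x$ with $\|x - \tilde{x}^{k+1}\| \le \delta_1$, where $\delta_1 := \eta \cdot \min\{r(x^k), r(x^k)^{1+\rho}\} / (\lambda_{\max}(H_k) + 2)$.

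The quadratic condition~\eqref{eq:inexact-cond-quad} is where the real work lies. Using $q_k(x) = \ell_k(x) + \tfrac12 (x - x^k)^T H_k (x - x^k)$ and $q_k(x^k) = \ell_k(x^k)$, I would rewrite \eqref{eq:inexact-cond-quad} at a point $x$ as $\Phi(x) \le 0$, where
\[
\Phi(x) := (1 - \zeta)\bigl(\ell_k(x) - \ell_k(x^k)\bigr) + \tfrac12 (x - x^k)^T H_k (x - x^k).
\]
The crux is to show $\Phi(\tilde{x}^{k+1}) < 0$ \emph{strictly}. To this end I would invoke inner optimality: there is $s \in \partial g(\tilde{x}^{k+1})$ with $s = -\nabla f(x^k) - H_k \tilde{d}$, and the subgradient inequality $g(x^k) \ge g(\tilde{x}^{k+1}) - s^T \tilde{d}$ gives $g(\tilde{x}^{k+1}) - g(x^k) \le -\nabla f(x^k)^T \tilde{d} - \tilde{d}^T H_k \tilde{d}$, whence
\[
\ell_k(\tilde{x}^{k+1}) - \ell_k(x^k) = \nabla f(x^k)^T \tilde{d} + g(\tilde{x}^{k+1}) - g(x^k) \le -\tilde{d}^T H_k \tilde{d}.
\]
Substituting into $\Phi$ and using $1 - \zeta > 0$ and then $\zeta < 1/2$ yields $\Phi(\tilde{x}^{k+1}) \le (\zeta - \tfrac12)\, \tilde{d}^T H_k \tilde{d} < 0$, the strict sign coming precisely from the design choice $\zeta < 1/2$ together with $\tilde{d}^T H_k \tilde{d} > 0$. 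Since $\Phi$ is continuous near $\tilde{x}^{k+1}$, this strict negativity persists on some ball of radius $\delta_2 > 0$, i.e.\ \eqref{eq:inexact-cond-quad} holds there. Taking $\delta := \min\{\delta_1, \delta_2\}$, every $x$ with $\|x - \tilde{x}^{k+1}\| \le \delta$ satisfies both parts of \eqref{eq:inexact-cond}.

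I expect the main obstacle to be the quadratic condition: one must extract the sharp descent estimate $\ell_k(\tilde{x}^{k+1}) - \ell_k(x^k) \le -\tilde{d}^T H_k \tilde{d}$ from the inner optimality condition and combine it with $\zeta < 1/2$ to obtain a \emph{strict} inequality, since a larger line-search constant would not leave the slack $(\tfrac12 - \zeta)\tilde{d}^T H_k \tilde{d}$ needed to pass to a neighborhood. A secondary technical point is the continuity of $\Phi$ (equivalently of $g$) near $\tilde{x}^{k+1}$; this is immediate when $\tilde{x}^{k+1}$ lies in the interior of $\dom(g)$ and holds for all the regularizers $g$ appearing in Fact~\ref{fact:eb-holds}, so it poses no real difficulty for the problems of interest.
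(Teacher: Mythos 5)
Your proposal is correct and follows essentially the same route as the paper's proof: the residual condition follows from $r_k(\tilde{x}^{k+1})=0$ together with the (Lipschitz) continuity of $r_k$, and the quadratic condition follows from the inner optimality condition via the subgradient inequality, which yields exactly the paper's estimate $\ell_k(x^k) \ge \ell_k(\tilde{x}^{k+1}) + \tilde{d}^T H_k \tilde{d}$ and hence a strict inequality at $\tilde{x}^{k+1}$ thanks to $\zeta < 1/2$ and $H_k \succ 0$, after which continuity finishes the argument. Your version is only marginally more explicit (quantitative radius $\delta_1$, and deducing $\tilde{x}^{k+1}\neq x^k$ from $r_k(x^k)=r(x^k)>0$ rather than from $\mathbf{0}\in\partial\ell_k(x^k)$), and your closing caveat about the continuity of $g$ at $\tilde{x}^{k+1}$ is a point the paper's proof also passes over.
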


\begin{proof}
Consider the iterate $x^k$ in the $k$-th iteration. We may assume that $x^k$ is not optimal for problem~\eqref{eq:basic-prob}, for otherwise the algorithm would have terminated. The continuity of $r_k$ and the fact that $r_k(\tilde{x}^{k+1}) = 0$ imply that condition~\eqref{eq:inexact-cond-res} is satisfied by any $x$ that is sufficiently close to $\tilde{x}^{k+1}$. Now, let us consider condition~\eqref{eq:inexact-cond-quad}. Since $q_k(x) = \ell_k(x) + \frac{1}{2}(x - x^k)^TH_k(x - x^k)$ and $\tilde{x}^{k+1}$ is the minimizer of $q_k$, we have $H_k(x^k - \tilde{x}^{k+1}) \in \partial\ell_k(\tilde{x}^{k+1})$. This leads to $x^k\neq \tilde{x}^{k+1}$, for otherwise we would have $\mathbf{0}\in\partial\ell_k(x^k)$, which would imply that $x^k$ is an optimal solution to problem~\eqref{eq:basic-prob}. Moreover, by the convexity of $\ell_k$, we have
\begin{equation}\label{eq:tri-ineq}
\ell_k(x^k) \geq \ell_k(\tilde{x}^{k+1}) + (\tilde{x}^{k+1} - x^k)^T H_k(\tilde{x}^{k+1} - x^k). 
\end{equation}
Since $H_k = \nabla^2 f(x^k) + \mu_kI$ for some $\mu_k>0$, $H_k$ is positive definite. This, together with the fact that $x^k\neq \tilde{x}^{k+1}$, implies that $\ell_k(x^k) > \ell_k(\tilde{x}^{k+1})$. Furthermore, we have
\begin{equation*}
\begin{split}
q_k(\tilde{x}^{k+1}) - q_k(x^k) & = \ell_k(\tilde{x}^{k+1}) + \frac{1}{2}(\tilde{x}^{k+1} - x^k)^T H_k(\tilde{x}^{k+1} - x^k) - \ell_k(x^k)  \\
& \leq \frac{1}{2} \left( \ell_k(\tilde{x}^{k+1}) - \ell_k(x^k) \right) \\
& < \zeta \left( \ell_k(\tilde{x}^{k+1}) - \ell_k(x^k) \right),
\end{split}
\end{equation*}
where the first inequality is due to~\eqref{eq:tri-ineq} and the last is due to $\zeta\in(\theta,1/2)$ and $\ell_k(x^k)>\ell_k(\tilde{x}^{k+1})$. Therefore, by the continuity of $q_k$ and $\ell_k$, condition~\eqref{eq:inexact-cond-quad} is satisfied by any $x$ that is close enough to $\tilde{x}^{k+1}$. 
\end{proof}

We next show that the backtracking line search is well defined.
\begin{lemma}\label{lem:exist-step-size}
Suppose that Assumption \ref{ass:smoothness}\ref{ass:L1} holds. Then, for any iteration $k$ of Algorithm~\ref{alg:IRPN}, there exists an integer $i\ge0$ such that the descent condition~\eqref{eq:descent-cond} is satisfied. Moreover, the step size $\alpha_k$ obtained from the line search strategy satisfies
$$ \alpha_k \geq \frac{\beta(1-\theta)\mu_k}{(1-\zeta)L_1}. $$
\end{lemma}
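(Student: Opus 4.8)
The statement has two parts: (i) the backtracking line search terminates, i.e.\ there is a finite integer $i \ge 0$ satisfying the descent condition~\eqref{eq:descent-cond}, and (ii) the resulting step size $\alpha_k = \beta^i$ admits the explicit lower bound $\alpha_k \ge \beta(1-\theta)\mu_k / ((1-\zeta)L_1)$. The natural strategy is to prove a single statement that yields both at once: I will show that \eqref{eq:descent-cond} holds for every step size $\alpha$ in an interval of the form $(0, \bar\alpha]$, where $\bar\alpha$ is an explicit positive quantity. Once such a $\bar\alpha$ is in hand, termination is immediate (any $\beta^i \le \bar\alpha$ works), and the lower bound on $\alpha_k$ follows from the backtracking mechanics: the accepted step $\alpha_k$ is either $1$ or of the form $\beta^i$ with $\beta^{i-1} = \alpha_k/\beta$ having \emph{failed} the test, so $\alpha_k/\beta > \bar\alpha$, giving $\alpha_k > \beta\bar\alpha$.

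\emph{Key steps, in order.}
First I would write down the sufficient-decrease target. Set $d^k = \hat x^{k+1} - x^k$ and, for $\alpha \in (0,1]$, estimate $F(x^k) - F(x^k + \alpha d^k)$ from below. Using convexity of $g$ we have $g(x^k + \alpha d^k) \le (1-\alpha)g(x^k) + \alpha\, g(\hat x^{k+1})$, and by Assumption~\ref{ass:smoothness}\ref{ass:L1} together with the descent lemma (Fact-type consequence of $L_1$-Lipschitz $\nabla f$) we have
$$
f(x^k + \alpha d^k) \le f(x^k) + \alpha\, \nabla f(x^k)^T d^k + \frac{L_1 \alpha^2}{2}\|d^k\|^2 .
$$
Combining these and recalling $\ell_k(x^k) - \ell_k(x^k + \alpha d^k) = -\alpha(\nabla f(x^k)^T d^k + g(\hat x^{k+1}) - g(x^k))$, I would obtain
$$
F(x^k) - F(x^k + \alpha d^k) \ge \alpha\bigl(\ell_k(x^k) - \ell_k(\hat x^{k+1})\bigr) - \frac{L_1\alpha^2}{2}\|d^k\|^2 .
$$
Next, the crucial link is to bound $\|d^k\|^2$ by the first-order decrease. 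Since $\hat x^{k+1}$ satisfies the inexactness condition~\eqref{eq:inexact-cond-quad} and $q_k(x) = \ell_k(x) + \tfrac12 (x-x^k)^T H_k (x-x^k)$, expanding \eqref{eq:inexact-cond-quad} gives $\tfrac12 (d^k)^T H_k d^k \le (1-\zeta)\bigl(\ell_k(x^k) - \ell_k(\hat x^{k+1})\bigr)$; combined with $H_k \succeq \mu_k I$ (as $\nabla^2 f(x^k)\succeq 0$ and $\mu_k>0$), this yields $\tfrac{\mu_k}{2}\|d^k\|^2 \le (1-\zeta)\bigl(\ell_k(x^k)-\ell_k(\hat x^{k+1})\bigr)$. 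Substituting this to eliminate $\|d^k\|^2$ and writing $\Delta_k := \ell_k(x^k) - \ell_k(\hat x^{k+1}) > 0$ (positive by the reasoning in Lemma~\ref{lem:exist-hatx}), I get
$$
F(x^k) - F(x^k + \alpha d^k) \ge \alpha \Delta_k - \frac{L_1(1-\zeta)}{\mu_k}\alpha^2 \Delta_k = \alpha\Delta_k\left(1 - \frac{L_1(1-\zeta)}{\mu_k}\alpha\right).
$$
The descent condition~\eqref{eq:descent-cond} asks for $F(x^k) - F(x^k+\alpha d^k) \ge \theta\alpha\Delta_k$, so it suffices that $1 - \tfrac{L_1(1-\zeta)}{\mu_k}\alpha \ge \theta$, i.e.\ $\alpha \le \bar\alpha := (1-\theta)\mu_k / ((1-\zeta)L_1)$. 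Thus \eqref{eq:descent-cond} holds for all $\alpha \in (0,\bar\alpha]$, which proves termination, and the backtracking argument (the step before acceptance violated the test, hence exceeded $\bar\alpha$) gives $\alpha_k > \beta\bar\alpha$, i.e.\ the claimed bound $\alpha_k \ge \beta(1-\theta)\mu_k/((1-\zeta)L_1)$.

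\emph{Main obstacle.}
The routine part is the descent-lemma estimate; the step requiring genuine care is the passage from the inexactness condition~\eqref{eq:inexact-cond-quad} to the quadratic lower bound $\tfrac{\mu_k}{2}\|d^k\|^2 \le (1-\zeta)\Delta_k$ and, relatedly, confirming $\Delta_k > 0$ so that dividing by it preserves inequality directions. Here one must correctly expand $q_k - \ell_k$ on both $\hat x^{k+1}$ and $x^k$ (the term vanishes at $x^k$) and use $H_k \succeq \mu_k I$; getting the constant $(1-\zeta)$ rather than some other factor is what makes the final bound come out exactly as stated. I expect no analytic difficulty beyond bookkeeping, but the interplay between $\theta$, $\zeta$, and the sign of $\Delta_k$ is where an error would most easily creep in.
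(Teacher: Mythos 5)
Your overall strategy and all the key estimates match the paper's proof: the bound $\tfrac{\mu_k}{2}\|d^k\|^2 \le (1-\zeta)\,(\ell_k(x^k)-\ell_k(\hat{x}^{k+1}))$ extracted from \eqref{eq:inexact-cond-quad}, the descent lemma for $f$, and the backtracking mechanics are exactly the ingredients the paper uses. However, there is one genuine error in how you connect these to the descent condition \eqref{eq:descent-cond}. You assert the identity $\ell_k(x^k)-\ell_k(x^k+\alpha d^k) = \alpha\,(\ell_k(x^k)-\ell_k(\hat{x}^{k+1})) = \alpha\Delta_k$ and conclude that \eqref{eq:descent-cond} ``asks for'' $F(x^k)-F(x^k+\alpha d^k)\ge\theta\alpha\Delta_k$. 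That identity requires $g$ to be affine on the segment between $x^k$ and $\hat{x}^{k+1}$; in general convexity of $g$ only gives
\begin{equation*}
\ell_k(x^k)-\ell_k(x^k+\alpha d^k)\;\ge\;\alpha\Delta_k ,
\end{equation*}
and the inequality can be strict (take $g$ restricted to the segment to lie strictly below its chord). Consequently the right-hand side $\theta\,(\ell_k(x^k)-\ell_k(x^k+\alpha d^k))$ of \eqref{eq:descent-cond} can strictly exceed $\theta\alpha\Delta_k$, so establishing $F(x^k)-F(x^k+\alpha d^k)\ge\theta\alpha\Delta_k$ does not establish \eqref{eq:descent-cond}: as written, you have verified a weaker sufficient-decrease inequality than the one the algorithm actually tests.

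The repair is short and is precisely the paper's maneuver: do not replace the damped decrease by $\alpha\Delta_k$ on the right-hand side of the target. Instead, subtract $\theta\,(\ell_k(x^k)-\ell_k(x^k+\alpha d^k))$ from the descent-lemma estimate $F(x^k)-F(x^k+\alpha d^k)\ge \ell_k(x^k)-\ell_k(x^k+\alpha d^k)-\tfrac{L_1\alpha^2}{2}\|d^k\|^2$ to obtain the surplus $(1-\theta)\,(\ell_k(x^k)-\ell_k(x^k+\alpha d^k))-\tfrac{L_1\alpha^2}{2}\|d^k\|^2$, and only then apply the convexity lower bound $\ell_k(x^k)-\ell_k(x^k+\alpha d^k)\ge\alpha\Delta_k\ge\tfrac{\alpha\mu_k}{2(1-\zeta)}\|d^k\|^2$, which is legitimate here because the coefficient $1-\theta$ is positive. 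This gives the surplus the value at least $\tfrac{\alpha}{2}\left(\tfrac{1-\theta}{1-\zeta}\mu_k-\alpha L_1\right)\|d^k\|^2\ge0$ for all $\alpha\le(1-\theta)\mu_k/((1-\zeta)L_1)$, after which your termination and backtracking argument goes through unchanged and yields the stated lower bound on $\alpha_k$.
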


\begin{proof}
By definition, we have
$$ q_k(\hat{x}^{k+1}) - q_k(x^k) = \ell_k(\hat{x}^{k+1}) - \ell_k(x^k) + \frac{1}{2}(\hat{x}^{k+1} - x^k)^TH_k(\hat{x}^{k+1} - x^k). $$
Since the approximate minimizer $\hat{x}^{k+1}$ satisfies the inexactness condition~\eqref{eq:inexact-cond}, the above implies that
\begin{eqnarray}
\ell_k(x^k) - \ell_k(\hat{x}^{k+1}) &\geq& \frac{1}{2(1-\zeta)}(\hat{x}^{k+1} - x^k)^TH_k(\hat{x}^{k+1} - x^k) \nonumber \\
&\geq& \frac{\mu_k}{2(1-\zeta)}\|\hat{x}^{k+1} - x^k\|^2. \label{ineq:l_k-decrease}
\end{eqnarray}
Due to the convexity of $\ell_k$, for any $\alpha\in[0,1]$, we have
$$ \ell_k(x^k) - \ell_k(x^k + \alpha d^k) \geq \alpha\left( \ell_k(x^k) - \ell_k(x^k + d^k) \right). $$
Combining the above two inequalities and noting that $\hat{x}^{k+1} = x^k + d^k$, we obtain
\begin{equation}\label{eq:ell-lb}
\ell_k(x^k) - \ell_k(x^k + \alpha d^k) \geq \frac{\alpha\mu_k}{2(1-\zeta)}\|d^k\|^2. 
\end{equation}
Moreover, since $\nabla f$ is Lipschitz continuous by Assumption \ref{ass:smoothness}\ref{ass:L1}, we have
$$%\begin{equation}\label{eq:ineq-lip}
f(x^k + \alpha d^k) - f(x^k) \leq \alpha\nabla f(x^k)^T d^k + \frac{\alpha^2L_1}{2}\|d^k\|^2, 
$$%\end{equation}
which, by the definition of $\ell_k$, leads to
$$ F(x^k) - F(x^k + \alpha d^k) \geq \ell_k(x^k) - \ell_k(x^k + \alpha d^k) - \frac{\alpha^2L_1}{2}\|d^k\|^2. $$
%Comments: By the definition of $\ell_k$ (bottom of page 8), we have
%$$ \ell_k(x^k) = f(x^k) + g(x^k); \quad \ell_k(x^k + \alpha d^k) = f(x^k) + \alpha \nabla f(x^k)^T d^k + g(x^k + \alpha d^k). $$
%Also, by definition, we have
%$$ F(x^k) - F(x^k + \alpha d^k) = f(x^k) + g(x^k) - f(x^k+\alpha d^k) - g(x^k + \alpha d^k). $$
%Hence, 
%$$ F(x^k) - F(x^k + \alpha d^k) - \ell_k(x^k) + \ell_k(x^k + \alpha d^k) = f(x^k) + \alpha \nabla f(x^k)^T d^k - f(x^k + \alpha d^k). $$
%This, together with \eqref{eq:ineq-lip}, leads to that inequality you commented.
%
%\medskip
%\noindent
It then follows from the above inequality and \eqref{eq:ell-lb} that
\begin{equation*}
\begin{aligned}
& F(x^k) - F(x^k + \alpha d^k) - \theta \left( \ell_k(x^k) - \ell_k(x^k + \alpha d^k) \right) \\
\geq\,& (1-\theta) \left( \ell_k(x^k) - \ell_k(x^k + \alpha d^k) \right) - \frac{\alpha^2L_1}{2}\|d^k\|^2 \\
\geq\,& \frac{(1-\theta)\alpha\mu_k}{2(1-\zeta)}\|d^k\|^2 - \frac{\alpha^2L_1}{2}\|d^k\|^2 \\
=\,& \frac{\alpha}{2}\left(\frac{1-\theta}{1-\zeta}\mu_k - \alpha L_1\right)\|d^k\|^2.
\end{aligned}
\end{equation*}
Hence, as long as $\alpha$ satisfies $\alpha < (1-\theta)\mu_k/(1-\zeta)L_1$, the descent condition~\eqref{eq:descent-cond} is satisfied. Since the backtracking line search multiplies the step length by $\beta\in(0,1)$ after each trial, the line search strategy will output an $\alpha_k$ that satisfies $\alpha_k\geq \beta(1-\theta)\mu_k/(1-\zeta)L_1$ in a finite number of steps.
\end{proof}
\noindent Combining Lemmas~\ref{lem:exist-hatx} and~\ref{lem:exist-step-size}, we conclude that \textsf{IRPN} is well defined.

\section{Convergence Analysis of \textsf{IRPN}}\label{sec:convergence-analysis}
In this section, we establish the global convergence and local convergence rate of \textsf{IRPN}. Let $\{x^k\}_{k\geq 0}$ be the sequence of iterates generated by Algorithm \ref{alg:IRPN}. We first present the following result, which shows that the residues $\{r(x^k)\}_{k\ge0}$ eventually vanish.
\begin{prop}\label{prop:global-converge}
Suppose that Assumption \ref{ass:smoothness}\ref{ass:L1} holds. Then, we have
$$%\begin{equation}\label{eq:global-converge}
\lim_{k\rightarrow\infty} r(x^k) = 0.
$$%\end{equation}
In particular, every accumulation point of the sequence $\{x^k\}_{k\geq 0}$ is an optimal solution to problem~\eqref{eq:basic-prob}.
\end{prop}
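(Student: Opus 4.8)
The plan is to first derive a sufficient-decrease inequality for the objective values, extract summability, and then transfer this information to the residues $r(x^k)$ by invoking the inexactness condition. Combining the descent condition~\eqref{eq:descent-cond} at the accepted step size $\alpha_k$ with inequality~\eqref{eq:ell-lb} and the lower bound on $\alpha_k$ from Lemma~\ref{lem:exist-step-size}, I would first obtain
\[
F(x^k) - F(x^{k+1}) \;\ge\; \theta\cdot\frac{\alpha_k\mu_k}{2(1-\zeta)}\|d^k\|^2 \;\ge\; \frac{\theta\beta(1-\theta)}{2(1-\zeta)^2L_1}\,\mu_k^2\|d^k\|^2.
\]
Every iterate lies in $\mbox{dom}(g)$ (each $\hat x^{k+1}$ does, and $x^{k+1}$ is a convex combination of $x^k$ and $\hat x^{k+1}$ since $\alpha_k\in(0,1]$), so the values $F(x^k)$ are finite, bounded below by $F^*$, and non-increasing, hence convergent. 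Telescoping the above inequality yields $\sum_{k\ge0}\mu_k^2\|d^k\|^2<\infty$, and in particular $\mu_k\|d^k\|\to0$.

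The second ingredient is a lower bound on $\|d^k\|$ in terms of $r(x^k)$. The key observation is that $\nabla f_k(x^k)=\nabla f(x^k)$, whence $R_k(x^k)=R(x^k)$ and so $r_k(x^k)=r(x^k)$. The Lipschitz estimate of Corollary~\ref{cor:lip-resi-map-im}, the inexactness condition~\eqref{eq:inexact-cond-res} (using $\min\{r(x^k),r(x^k)^{1+\rho}\}\le r(x^k)$ and $\eta<1$), and Fact~\ref{fact:bound-hess} (which gives $\lambda_{\max}(H_k)=\lambda_{\max}(\nabla^2 f(x^k))+\mu_k\le L_1+\mu_k$) then combine into
\[
r(x^k)=r_k(x^k)\le r_k(\hat x^{k+1})+(\lambda_{\max}(H_k)+2)\|d^k\|\le \eta\, r(x^k)+(L_1+2+\mu_k)\|d^k\|,
\]
so that $(1-\eta)\,r(x^k)\le(L_1+2+\mu_k)\|d^k\|$.

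Substituting $\mu_k=c\,r(x^k)^\rho$ and multiplying by $\mu_k$, the two steps merge into $\mu_k\|d^k\|\ge(1-\eta)c\,\psi(r(x^k))$, where $\psi(s):=s^{1+\rho}/(L_1+2+c\,s^\rho)$. Since $\mu_k\|d^k\|\to0$, this forces $\psi(r(x^k))\to0$. The function $\psi$ is continuous on $[0,\infty)$, strictly increasing, with $\psi(0)=0$ and $\psi(s)\to\infty$ as $s\to\infty$ (indeed $\psi(s)\sim s/c$); therefore $\psi(r(x^k))\to0$ forces $r(x^k)\to0$, for if $r(x^k)\ge\delta>0$ along a subsequence then $\psi$ would be bounded away from $0$ there, a contradiction. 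Notably, this single argument covers all $\rho\in[0,1]$. Finally, for any accumulation point $\bar x$ with $x^{k_j}\to\bar x$, the continuity of the residual function $r$ (Proposition~\ref{prop:lip-resi-map}) gives $r(\bar x)=\lim_j r(x^{k_j})=0$, so $\bar x\in\mathcal X$ by the defining property of $r$.

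The delicate point is the last merging step: the sufficient-decrease bound is weighted by $\mu_k^2=c^2 r(x^k)^{2\rho}$, which degenerates exactly when $r(x^k)\to0$ (for $\rho>0$), so summability by itself does not control $r(x^k)$. Breaking this apparent circularity is precisely what the inexactness-driven bound $(1-\eta)r(x^k)\le(L_1+2+\mu_k)\|d^k\|$ and the auxiliary function $\psi$ accomplish, allowing a uniform treatment of the regularization exponent $\rho$.
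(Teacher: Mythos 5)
Your proof is correct and follows essentially the same route as the paper's: both derive $\mu_k^2\|d^k\|^2\to0$ from the descent condition, the bound~\eqref{eq:ell-lb}, and the step-size lower bound of Lemma~\ref{lem:exist-step-size}, and both combine this with the inexactness-driven estimate $(1-\eta)r(x^k)\le(L_1+\mu_k+2)\|d^k\|$ to force $r(x^k)\to0$. Your explicit auxiliary function $\psi$ merely spells out the final step that the paper leaves implicit in the sentence ``since $\mu_k=c\cdot r(x^k)^\rho$, we conclude $\lim_{k\to\infty}r(x^k)=0$.''
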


\begin{proof}
From~\eqref{eq:ell-lb}, we obtain
$$%\begin{equation}\label{eq:ell-lb-1}
\ell_k(x^k) - \ell_k(x^k + \alpha d^k) \geq \frac{\alpha\mu_k}{2(1-\zeta)}\|d^k\|^2 \geq 0. 
$$%\end{equation}
Hence, due to the descent condition \eqref{eq:descent-cond} and the assumption that $F^* > -\infty$, we have $\lim_{k\rightarrow\infty} \left( \ell_k(x^k) - \ell_k(x^k + \alpha_k d^k) \right) = 0$.
This, together with~\eqref{eq:ell-lb} again, implies that 
\begin{equation} \label{eq:res-limit-1}
\lim_{k\rightarrow\infty}\alpha_k\mu_k\|d^k\|^2 = 0.
\end{equation}
On the other hand, we have $r_k(\hat{x}^{k+1}) \leq \eta\cdot r(x^k)$ from the inexactness condition~\eqref{eq:inexact-cond}. It then follows that
\begin{eqnarray}
(1-\eta)r(x^k) &\leq& r(x^k) - r_k(\hat{x}^{k+1}) \nonumber \\
&=& r_k(x^k) - r_k(\hat{x}^{k+1}) \nonumber \\
&\leq& (\lambda_{\max}(H_k) + 2)\|d^k\| \nonumber \\
&\leq& (L_1 + \mu_k + 2)\|d^k\|, \label{eq:res-limit-2}
\end{eqnarray}
where the first equality follows from $r_k(x^k) = r(x^k)$, the second inequality is by Corollary \ref{cor:lip-resi-map-im}, and the last inequality is due to Fact~\ref{fact:bound-hess}. By combining~\eqref{eq:res-limit-1} and~\eqref{eq:res-limit-2} with Lemma~\ref{lem:exist-step-size}, we obtain
$$ \lim_{k\rightarrow\infty} \frac{\mu_k^2}{(L_1 + \mu_k + 2)^2}r(x^k)^2 = 0. $$
Since $\mu_k = c\cdot r(x^k)^\rho$, we conclude that $\lim_{k\rightarrow\infty} r(x^k) = 0$. 
\end{proof}
Although Proposition~\ref{prop:global-converge} and the Luo-Tseng EB property~\eqref{eq:def-eb} together imply that $\mbox{dist}(x^k,\mathcal{X}) \rightarrow 0$, the latter does not guarantee the convergence of the sequence $\{x^k\}_{k\ge0}$. Hence, some extra arguments are needed to establish the global convergence of \textsf{IRPN}. As it turns out, by invoking the Luo-Tseng EB property~\eqref{eq:def-eb}, we can bound the local rate at which the sequence $\{\mbox{dist}(x^k,\mathcal{X})\}_{k\ge0}$ tends to zero, from which we can establish not only the convergence but also the local rate of convergence of the sequence $\{x^k\}_{k\ge0}$. To begin, let us present the following result, whose proof can be found in Section~\ref{sec:pf-thm-3}.

\begin{prop} \label{prop:convergence-rate}
Suppose that Assumptions~\ref{ass:smoothness} and~\ref{ass:ass-eb} hold. Then, for sufficiently large $k$, we have
\begin{enumerate}[wide = 0pt, labelwidth = 2em, labelsep*=0em, itemindent = 0pt, leftmargin = \dimexpr\labelwidth + \labelsep\relax, noitemsep,topsep = 1ex, font=\normalfont, label=(\roman*)]
\item ${\rm dist}(x^{k+1},\mathcal{X})\leq \gamma \cdot {\rm dist}(x^k,\mathcal{X})$ for some $\gamma\in(0,1)$ if we take $\rho = 0$, $c\leq \tfrac{\kappa}{4}$, and $\eta\leq \tfrac{1}{2(L_1+3)^2}$;
\item ${\rm dist}(x^{k+1},\mathcal{X})\leq O( {\rm dist}(x^k,\mathcal{X})^{1+\rho})$ if we take $\rho\in(0,1)$;
\item ${\rm dist}(x^{k+1},\mathcal{X})\leq O( {\rm dist}(x^k,\mathcal{X})^{2})$ if we take $\rho =1$, $c\geq \kappa L_2$, and $\eta\leq \tfrac{\kappa^2 L_2}{2(L_1+1)}$.
\end{enumerate}
\end{prop}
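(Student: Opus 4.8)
The plan is to prove all three estimates from a single chain: show that the unit step is eventually accepted, so that $x^{k+1}=\hat{x}^{k+1}$, bound the new residual $r(x^{k+1})$ by the mismatch between $F$ and its quadratic model $q_k$, and then convert this into a bound on ${\rm dist}(x^{k+1},\mathcal{X})$ through the Luo--Tseng EB inequality ${\rm dist}(x^{k+1},\mathcal{X})\le\kappa\,r(x^{k+1})$ (Assumption \ref{ass:ass-eb}). Two preliminary ``dictionaries'' will be used repeatedly. First, for large $k$ the quantities $r(x^k)$ and ${\rm dist}(x^k,\mathcal{X})=:D_k$ are comparable: Proposition \ref{prop:lip-resi-map} gives $r(x^k)\le(L_1+2)D_k$, while EB gives $D_k\le\kappa\,r(x^k)$ once $r(x^k)$ is small, which happens eventually by Proposition \ref{prop:global-converge}. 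In particular $\mu_k=c\,r(x^k)^\rho\ge c\,(D_k/\kappa)^\rho$, and this lower bound on $\mu_k$ is what will tame the $1/\mu_k$ factors below. Second, since the inner model $q_k$ is $\mu_k$-strongly convex with $\nabla f_k$ Lipschitz of modulus $\lambda_{\max}(H_k)\le L_1+\mu_k$ (Fact \ref{fact:bound-hess}), a short monotonicity argument based on the fixed-point characterization of Corollary \ref{cor:opt-fix-pt-im} will give, for every $x$, the estimate $\|x-\tilde{x}^{k+1}\|\le\frac{1+L_1+\mu_k}{\mu_k}\,\|R_k(x)\|$ tying the distance to the exact inner minimizer $\tilde{x}^{k+1}$ to the inner residual $r_k$.

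The technical core is to show $\|d^k\|=\|\hat{x}^{k+1}-x^k\|\le C\,D_k$ for some constant $C$. Let $\bar{x}^k$ be the projection of $x^k$ onto $\mathcal{X}$, so $\|\bar{x}^k-x^k\|=D_k$. Since $R(\bar{x}^k)=0$ and $\nabla f_k(\bar{x}^k)=\nabla f(x^k)+H_k(\bar{x}^k-x^k)$, the non-expansiveness of ${\rm prox}_g$ together with a second-order Taylor estimate controlled by Assumption \ref{ass:smoothness}\ref{ass:L2} yields $\|R_k(\bar{x}^k)\|\le\frac{L_2}{2}D_k^2+\mu_k D_k$, while the inexactness condition \eqref{eq:inexact-cond-res} gives $r_k(\hat{x}^{k+1})\le\eta\,r(x^k)^{1+\rho}$. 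Feeding both into the strong-convexity estimate and using $\mu_k\ge c(D_k/\kappa)^\rho$ to absorb the $1/\mu_k$ factors, I would obtain $\|\bar{x}^k-\tilde{x}^{k+1}\|=O(D_k)$ and $\|\hat{x}^{k+1}-\tilde{x}^{k+1}\|=O(D_k)$; the triangle inequality $\|d^k\|\le\|\hat{x}^{k+1}-\tilde{x}^{k+1}\|+\|\tilde{x}^{k+1}-\bar{x}^k\|+D_k$ then closes the bound.

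To establish the unit step, I would expand $f$ to second order using Assumption \ref{ass:smoothness}\ref{ass:L2} to get $F(\hat{x}^{k+1})\le q_k(\hat{x}^{k+1})-\tfrac{\mu_k}{2}\|d^k\|^2+\tfrac{L_2}{6}\|d^k\|^3$. Combining this with $F(x^k)=q_k(x^k)$ and the model-decrease condition \eqref{eq:inexact-cond-quad} (which forces $\ell_k(x^k)\ge\ell_k(\hat{x}^{k+1})$ via \eqref{ineq:l_k-decrease}), the acceptance test \eqref{eq:descent-cond} at $\alpha=1$ reduces to $\tfrac{\mu_k}{2}\|d^k\|^2\ge\tfrac{L_2}{6}\|d^k\|^3$, i.e. $\|d^k\|\le 3\mu_k/L_2$. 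Since $\|d^k\|\le C D_k$ and $\mu_k\ge c(D_k/\kappa)^\rho$, this holds for all large $k$ when $\rho<1$ (because $D_k^{1-\rho}\to0$) and, when $\rho=1$, is ensured by the requirement $c\ge\kappa L_2$ of part (iii). Hence $\alpha_k=1$ and $x^{k+1}=\hat{x}^{k+1}$ eventually.

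Finally, with $x^{k+1}=\hat{x}^{k+1}$, the same non-expansiveness-plus-Taylor estimate applied at $\hat{x}^{k+1}$ gives $r(x^{k+1})\le r_k(\hat{x}^{k+1})+\tfrac{L_2}{2}\|d^k\|^2+\mu_k\|d^k\|$. Substituting $r_k(\hat{x}^{k+1})\le\eta\min\{r(x^k),r(x^k)^{1+\rho}\}$, $\mu_k=c\,r(x^k)^\rho$, $\|d^k\|\le C D_k$, and the EB inequality, I would read off the three regimes: for $\rho\in(0,1)$ every term is $O(D_k^{1+\rho})$ except the higher-order $D_k^2$ term, giving (ii); for $\rho=1$ all terms are $O(D_k^2)$, giving (iii), with the stated $\eta\le\kappa^2 L_2/(2(L_1+1))$ fixing the quadratic constant; and for $\rho=0$ the bound becomes ${\rm dist}(x^{k+1},\mathcal{X})\le\kappa\big(\eta(L_1+2)+cC+o(1)\big)D_k$, a genuine contraction $\gamma\in(0,1)$ precisely when $c\le\kappa/4$ and $\eta\le 1/(2(L_1+3)^2)$ as in (i). I expect the main obstacle to be the bound $\|d^k\|\le C D_k$: this is the one place where the vanishing regularization $\mu_k\to0$ (for $\rho>0$) threatens to blow up the strong-convexity estimate, and only the EB lower bound on $\mu_k$, paired carefully with the $\tfrac{L_2}{2}D_k^2$ and $\mu_k D_k$ terms, keeps $\|d^k\|$ linear in $D_k$; a secondary difficulty is the explicit constant bookkeeping needed to certify $\gamma<1$ in case (i).
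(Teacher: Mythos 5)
Your architecture is exactly the paper's: establish $r(x^k)\asymp\mathrm{dist}(x^k,\mathcal{X})$ via Proposition~\ref{prop:lip-resi-map} and the EB, bound $\|d^k\|=O(\mathrm{dist}(x^k,\mathcal{X}))$ by passing through $\tilde{x}^{k+1}$ and $\bar{x}^k$, force a unit step with the cubic Taylor estimate $F(\hat{x}^{k+1})\le q_k(\hat{x}^{k+1})+\tfrac{L_2}{6}\|d^k\|^3-\tfrac{\mu_k}{2}\|d^k\|^2$, and finish with $r(x^{k+1})\le r_k(\hat{x}^{k+1})+\tfrac{L_2}{2}\|d^k\|^2+\mu_k\|d^k\|$ plus the EB (this is precisely the chain through Lemmas~\ref{lem:ex-to-x^k}--\ref{lem:asymptotic-unit-step-size} and~\eqref{ineq:proof-main}). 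Your bound on $\|\hat{x}^{k+1}-\tilde{x}^{k+1}\|$ via strong monotonicity of $\partial q_k$ is the paper's Lemma~\ref{lem:ex-to-x^k+1}, and the claim $\|R_k(\bar{x}^k)\|\le\tfrac{L_2}{2}D_k^2+\mu_k D_k$ is correct. Part~(ii), which only asserts a big-$O$ estimate, goes through as you describe.

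The gap is in the one place where you deviate from the paper: the estimate $\|\bar{x}^k-\tilde{x}^{k+1}\|$. You propose to apply the generic inner error bound $\|x-\tilde{x}^{k+1}\|\le\tfrac{1+L_1+\mu_k}{\mu_k}\|R_k(x)\|$ at $x=\bar{x}^k$, which yields $\|\bar{x}^k-\tilde{x}^{k+1}\|\le\tfrac{(1+L_1+\mu_k)L_2}{2\mu_k}D_k^2+(1+L_1+\mu_k)D_k$. The linear-in-$D_k$ term now carries a prefactor of order $L_1$, so your constant $C$ in $\|d^k\|\le C D_k$ is inflated by a factor $\sim(1+L_1)$ relative to the paper's. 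This is harmless for~(ii), but parts~(i) and~(iii) assert the conclusion under the \emph{specific} thresholds $c\le\kappa/4$, $\eta\le\tfrac{1}{2(L_1+3)^2}$ and $c\ge\kappa L_2$, $\eta\le\tfrac{\kappa^2L_2}{2(L_1+1)}$, and those thresholds do not survive the inflation: for~(iii) your unit-step test $\|d^k\|\le 3\mu_k/L_2$ with $\mu_k=cr(x^k)$ would require roughly $(1+L_1)\bigl(\tfrac{\kappa^2L_2}{2c}+\kappa\bigr)\le\tfrac{3c}{L_2}$, which $c=\kappa L_2$ fails unless $L_1$ is small; similarly the contraction factor in~(i) picks up an extra $O(L_1)$ that $c\le\kappa/4$ cannot absorb. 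The fix is the paper's Lemma~\ref{lem:exact}: compare the two inclusions $\mathbf{0}\in\nabla f(\bar{x}^k)+\partial g(\bar{x}^k)$ and $\mathbf{0}\in\nabla f(x^k)+H_k(\tilde{x}^{k+1}-x^k)+\partial g(\tilde{x}^{k+1})$ directly through the monotonicity of $\partial g$, which gives $\|\tilde{x}^{k+1}-\bar{x}^k\|\le\tfrac{1}{\mu_k}\|\nabla f(\bar{x}^k)-\nabla f(x^k)-H_k(\bar{x}^k-x^k)\|\le\tfrac{L_2}{2\mu_k}D_k^2+D_k$ with prefactor exactly $1$ on the linear term, and with that replacement your plan reproduces the stated constants.
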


As remarked previously, the setting of Proposition~\ref{prop:convergence-rate} precludes the use of techniques that are based on either the positive definiteness of the Hessian of $f$ at an optimal solution or the uniqueness of the optimal solution to prove it. Our techniques, which heavily exploit the Luo-Tseng EB property~\eqref{eq:def-eb}, are different from those in \cite{LSS14,BNO16} and should find further applications in the convergence rate analysis of other second-order methods in the absence of strong convexity.

Using Proposition~\ref{prop:convergence-rate}, we can then prove that the sequence of iterates generated by \textsf{IRPN} converges to an optimal solution to problem~\eqref{eq:basic-prob} and establish the local rate of convergence.

%\section{Local Convergence}\label{sec:local-convergence}
%We next turn to study the asymptotic rate of convergence of Algorithm \ref{alg:IRPN}. In particular, we will prove the following result.
%\begin{thm}[Local Convergence Rate]\label{thm:convergence-rate}
%Suppose that Assumptions \ref{ass:smoothness} and \ref{ass:ass-eb} hold. Then, for sufficiently large $k$, it follows that
%\begin{enumerate}[(i)]
%\item $\mbox{dist}(x^{k+1},\mathcal{X})\leq \gamma \mbox{dist}(x^k,\mathcal{X})$ for some $\gamma\in(0,1)$, if we take $\rho = 0$, $c\leq \tfrac{\kappa}{4}$, and $\eta\leq \tfrac{1}{2(L_1+3)^2}$;
%\item $\mbox{dist}(x^{k+1},\mathcal{X})\leq O( \mbox{dist}(x^k,\mathcal{X})^{1+\rho})$ if we take $\rho\in(0,1)$;
%\item $\mbox{dist}(x^{k+1},\mathcal{X})\leq O( \mbox{dist}(x^k,\mathcal{X})^{2})$ if we take $\rho =1$, $c\geq \kappa L_2$, and $\eta\leq \tfrac{\kappa^2 L_2}{2(L_1+1)}$.
%\end{enumerate}
%\end{thm}
%The proof of Theorem \ref{thm:convergence-rate} is long and technical. To increase comprehensibility, we divide it into a series of lemmas, whose proofs are relegated to the appendix. Since lemmas \ref{lem:exact} through \ref{lem:asymptotic-unit-step-size} are are all proved under the conditions of Theorem \ref{thm:convergence-rate}, we suppress the assumptions in their statements. 

\begin{thm}[Global Convergence and Local Rate of Convergence of \textsf{IRPN}]\label{thm:seq_conv}
Suppose that Assumptions~\ref{ass:smoothness} and~\ref{ass:ass-eb} hold. Then, in all three cases of Proposition~\ref{prop:convergence-rate}, the sequence $\{x^k\}_{k\geq 0}$ converges to some $x^*\in\mathcal{X}$. Moreover, the convergence rate is at least
\begin{enumerate}[wide = 0pt, labelwidth = 2em, labelsep*=0em, itemindent = 0pt, leftmargin = \dimexpr\labelwidth + \labelsep\relax, noitemsep,topsep = 1ex, font=\normalfont, label=(\roman*)]
\item \emph{$R$-linear} if $\rho=0$, $c \leq\frac{\kappa}{4}$, and $\eta\leq \frac{1}{2(L_1+2)^2}$;
%$\{x^k\}_{k\geq 0}$ converges $R$-linearly to $x^{*}$ if $\rho=0$, $c \leq\frac{\kappa}{4}$, and $\eta\leq \frac{1}{2(L_1+2)^2}$;

\item \emph{$Q$-superlinear} with order $1 + \rho$ if $\rho\in(0,1)$;
%$\{x^k\}_{k\geq 0}$ converges $Q$-superlinearly to $x^{*}$ with order $1 + \rho$ if $\rho\in(0,1)$;

\item \emph{$Q$-quadratic} if $\rho=1$, $c\geq \kappa L_2$, and $\eta\leq \frac{\kappa^2L_2}{2(L_1+1)}$.
%$\{x^k\}_{k\geq 0}$ converges $Q$-quadratically to $x^{*}$ if $\rho=1$, $c\geq \kappa L_2$, and $\eta\leq \frac{\kappa^2L_2}{2(L_1+1)}$.
\end{enumerate}
\end{thm}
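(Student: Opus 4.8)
The plan is to promote the set-distance recursion of Proposition~\ref{prop:convergence-rate} to a genuine statement about the sequence $\{x^k\}_{k\ge0}$ itself, and then to transfer the rates on $\{\mathrm{dist}(x^k,\mathcal{X})\}$ to rates on $\{\|x^k-x^*\|\}$. First I would record that $\mathrm{dist}(x^k,\mathcal{X})\to0$: since the line-search condition~\eqref{eq:descent-cond} forces $F$ to be non-increasing, $F(x^k)\le F(x^0)$ for all $k$, and Proposition~\ref{prop:global-converge} gives $r(x^k)\to0$; hence for large $k$ both $F(x^k)\le F(x^0)$ and $r(x^k)\le\epsilon$ hold, so the Luo--Tseng EB property~\eqref{eq:def-eb} yields $\mathrm{dist}(x^k,\mathcal{X})\le\kappa\,r(x^k)\to0$. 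Combining this with Proposition~\ref{prop:lip-resi-map} (which gives $r(x^k)\le(L_1+2)\,\mathrm{dist}(x^k,\mathcal{X})$, since $r$ vanishes on $\mathcal{X}$) shows that $r(x^k)$ and $\mathrm{dist}(x^k,\mathcal{X})$ are comparable up to constants, a fact I will use repeatedly. Feeding $\mathrm{dist}(x^k,\mathcal{X})\to0$ back into Proposition~\ref{prop:convergence-rate} shows that in \emph{all three} cases one has the geometric bound $\mathrm{dist}(x^{k+1},\mathcal{X})\le\gamma\,\mathrm{dist}(x^k,\mathcal{X})$ for some $\gamma\in(0,1)$ and all large $k$: for $\rho=0$ this is immediate, while for $\rho\in(0,1]$ the higher-order bound $O(\mathrm{dist}(x^k,\mathcal{X})^{1+\rho})$ is eventually dominated by $\gamma\,\mathrm{dist}(x^k,\mathcal{X})$ because $\mathrm{dist}(x^k,\mathcal{X})^\rho\to0$.

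The crux is the single-step estimate
\[
\|x^{k+1}-x^k\|\le C\cdot\mathrm{dist}(x^k,\mathcal{X})\qquad\text{for all large }k,
\]
for some constant $C>0$; this is the one ingredient not contained in the \emph{statement} of Proposition~\ref{prop:convergence-rate}, though it is produced by the same estimates that prove it. Since $\alpha_k\le1$, it suffices to bound $\|d^k\|=\|\hat{x}^{k+1}-x^k\|$, and I would split it through the exact inner minimizer $\tilde{x}^{k+1}$ of $q_k$. For $\|\hat{x}^{k+1}-\tilde{x}^{k+1}\|$ I would use strong convexity of the inner problem: a monotonicity argument applied to the optimality conditions of $\tilde{x}^{k+1}$ and of $\mathrm{prox}_g(\hat{x}^{k+1}-\nabla f_k(\hat{x}^{k+1}))$ gives $\|\hat{x}^{k+1}-\tilde{x}^{k+1}\|\le\frac{1+\lambda_{\max}(H_k)}{\mu_k}\,r_k(\hat{x}^{k+1})$, and then the inexactness bound $r_k(\hat{x}^{k+1})\le\eta\,r(x^k)^{1+\rho}$ together with $\mu_k=c\,r(x^k)^\rho$ makes this $O(r(x^k))=O(\mathrm{dist}(x^k,\mathcal{X}))$. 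For $\|\tilde{x}^{k+1}-x^k\|$ I would compare $\tilde{x}^{k+1}$ with the projection $\bar{x}^k$ of $x^k$ onto $\mathcal{X}$: monotonicity of $\partial g$ applied to $0\in\nabla f(x^k)+H_k(\tilde{x}^{k+1}-x^k)+\partial g(\tilde{x}^{k+1})$ and $0\in\nabla f(\bar{x}^k)+\partial g(\bar{x}^k)$ yields $\mu_k\|\tilde{x}^{k+1}-\bar{x}^k\|\le\|\nabla f(\bar{x}^k)-\nabla f(x^k)-\nabla^2 f(x^k)(\bar{x}^k-x^k)\|+\mu_k\|\bar{x}^k-x^k\|$, and the first term is $\le\tfrac{L_2}{2}\mathrm{dist}(x^k,\mathcal{X})^2$ by the Lipschitz-Hessian Assumption~\ref{ass:smoothness}\ref{ass:L2}. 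Dividing by $\mu_k=c\,r(x^k)^\rho\asymp\mathrm{dist}(x^k,\mathcal{X})^\rho$ leaves $\tfrac{L_2}{2\mu_k}\mathrm{dist}(x^k,\mathcal{X})^2=O(\mathrm{dist}(x^k,\mathcal{X})^{2-\rho})=O(\mathrm{dist}(x^k,\mathcal{X}))$ for every $\rho\in[0,1]$; this precise cancellation between the second-order Taylor remainder and the vanishing regularization $\mu_k$ is the delicate point, and is exactly why $\rho\le1$ is required. Adding the two pieces gives the claimed bound.

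With the single-step estimate and geometric decay in hand, summability is immediate: $\sum_{k}\|x^{k+1}-x^k\|\le C\sum_k\mathrm{dist}(x^k,\mathcal{X})<\infty$, so $\{x^k\}$ is Cauchy and converges to some $x^*$; since $\mathrm{dist}(x^k,\mathcal{X})\to0$ and $\mathcal{X}$ is closed, $x^*\in\mathcal{X}$. Moreover, tail-summing gives $\|x^k-x^*\|\le C\sum_{j\ge k}\mathrm{dist}(x^j,\mathcal{X})$, which for $\rho=0$ is $O(\gamma^k)$ (R-linear), and for $\rho\in(0,1]$ is dominated by its leading term, so $\|x^k-x^*\|\le C'\,\mathrm{dist}(x^k,\mathcal{X})$. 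Combined with the trivial reverse bound $\mathrm{dist}(x^k,\mathcal{X})\le\|x^k-x^*\|$ (valid since $x^*\in\mathcal{X}$), this shows $\|x^k-x^*\|\asymp\mathrm{dist}(x^k,\mathcal{X})$; substituting into the cases of Proposition~\ref{prop:convergence-rate} then converts the set-distance rates into the stated point-rates, namely $\|x^{k+1}-x^*\|\le C''\|x^k-x^*\|^{1+\rho}$ for $\rho\in(0,1)$ (Q-superlinear of order $1+\rho$) and $\|x^{k+1}-x^*\|\le C''\|x^k-x^*\|^2$ for $\rho=1$ (Q-quadratic). The main obstacle is the single-step estimate of the second paragraph, specifically the control of $\|\tilde{x}^{k+1}-\bar{x}^k\|$: one must play the second-order smoothness of $f$ against the vanishing regularization $\mu_k$, and it is here that Assumption~\ref{ass:smoothness}\ref{ass:L2} and the upper limit $\rho=1$ enter decisively.
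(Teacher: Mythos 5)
Your proposal is correct and follows essentially the same route as the paper: establish $\mathrm{dist}(x^k,\mathcal{X})\to0$ and eventual geometric decay in all three cases, prove the single-step bound $\|x^{k+1}-x^k\|\le\sigma\,\mathrm{dist}(x^k,\mathcal{X})$ (which the paper extracts from inequality~\eqref{ineq:x^k-x^k+1-2}, itself built from the same Lemmas~\ref{lem:ex-to-x^k} and~\ref{lem:ex-to-x^k+1} you re-derive), sum to get a Cauchy sequence, and transfer the set-distance rates to point rates via $\|x^{k+1}-x^*\|\le\frac{\sigma}{1-\gamma}\mathrm{dist}(x^{k+1},\mathcal{X})$ together with $\mathrm{dist}(x^k,\mathcal{X})\le\|x^k-x^*\|$. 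The only cosmetic difference is that you inline the derivation of the single-step estimate inside the theorem's proof rather than citing it from the proof of Proposition~\ref{prop:convergence-rate}.
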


Remarkably, Theorem~\ref{thm:seq_conv} shows that \textsf{IRPN} can attain a superlinear or even quadratic rate of convergence without strong convexity.  The proof of Theorem~\ref{thm:seq_conv} can be found in Section~\ref{sec:pf-cor-3}.

Although Theorem \ref{thm:seq_conv} shows that a larger $\rho$ will result in a faster convergence rate with respect to the outer iteration counter $k$, it does not necessarily mean that a larger $\rho$ will lead to a better overall complexity. The reason is that a larger $\rho$ results in a more stringent inexactness condition (see~\eqref{eq:inexact-cond}) and thus the time consumed by each iteration is longer. Hence, despite the faster convergence rate, the best choice of $\rho\in[0,1]$ depends on the problem and the inner solver (see also Section~\ref{sec:numerical}). Nonetheless, Theorem~\ref{thm:seq_conv} provides a complete characterization of the convergence rate of \textsf{IRPN} in terms of $\rho$ and facilitates the flexibility of \textsf{IRPN}. Developing an empirical or analytic approach to tuning $\rho$ is definitely a topic worth pursuing.

Since Proposition~\ref{prop:convergence-rate} plays a crucial role in obtaining our convergence results and its proof is rather tedious, let us give an overview of the proof here. Let $\bar{x}^k$ be the projection of $x^k$ onto the optimal solution set $\mathcal{X}$. Also, let $\tilde{x}^{k+1}$ and $\hat{x}^{k+1}$ be the exact and approximate minimizers of the quadratic model $q_k$, respectively; see Figure~\ref{fig:roadmap}. Our goal is to show that $\mbox{dist}(x^{k+1},\mathcal{X}) = O(\mbox{dist}(x^k,\mathcal{X})^{1+\rho})$. Towards that end, we first prove that the distances $\| \tilde{x}^{k+1} - x^k \|$ and $\| \tilde{x}^{k+1} - \hat{x}^{k+1} \|$ are comparable, in the sense that they are both of $O({\rm dist}(x^k,\mathcal{X}))$; see Lemmas~\ref{lem:ex-to-x^k} and~\ref{lem:ex-to-x^k+1}. This implies that $\| \hat{x}^{k+1} - x^k \|$ is also of $O({\rm dist}(x^k,\mathcal{X}))$. Then, we show that eventually the step sizes computed by Algorithm~\ref{alg:IRPN} will all equal 1, which means that $x^{k+1} = \hat{x}^{k+1}$ for all sufficiently large $k$; see Lemma~\ref{lem:asymptotic-unit-step-size}.  As a result, we have $\| x^{k+1} - x^k \| = O({\rm dist}(x^k,\mathcal{X}))$ for all sufficiently large $k$. Finally, we show that $r(x^{k+1}) = O(\| x^{k+1} - x^k \|^{1+\rho})$ and invoke the Luo-Tseng EB property~\eqref{eq:def-eb} to conclude that 
$$ {\rm dist}(x^{k+1},\mathcal{X}) \le \kappa \cdot r(x^{k+1}) = O( \| \hat{x}^{k+1} - x^k \|^{1+\rho}) = O({\rm dist}(x^k,\mathcal{X})^{1+\rho}). $$

\begin{figure}[htb]%[H]
\centering
\includegraphics[width = 0.45\linewidth]{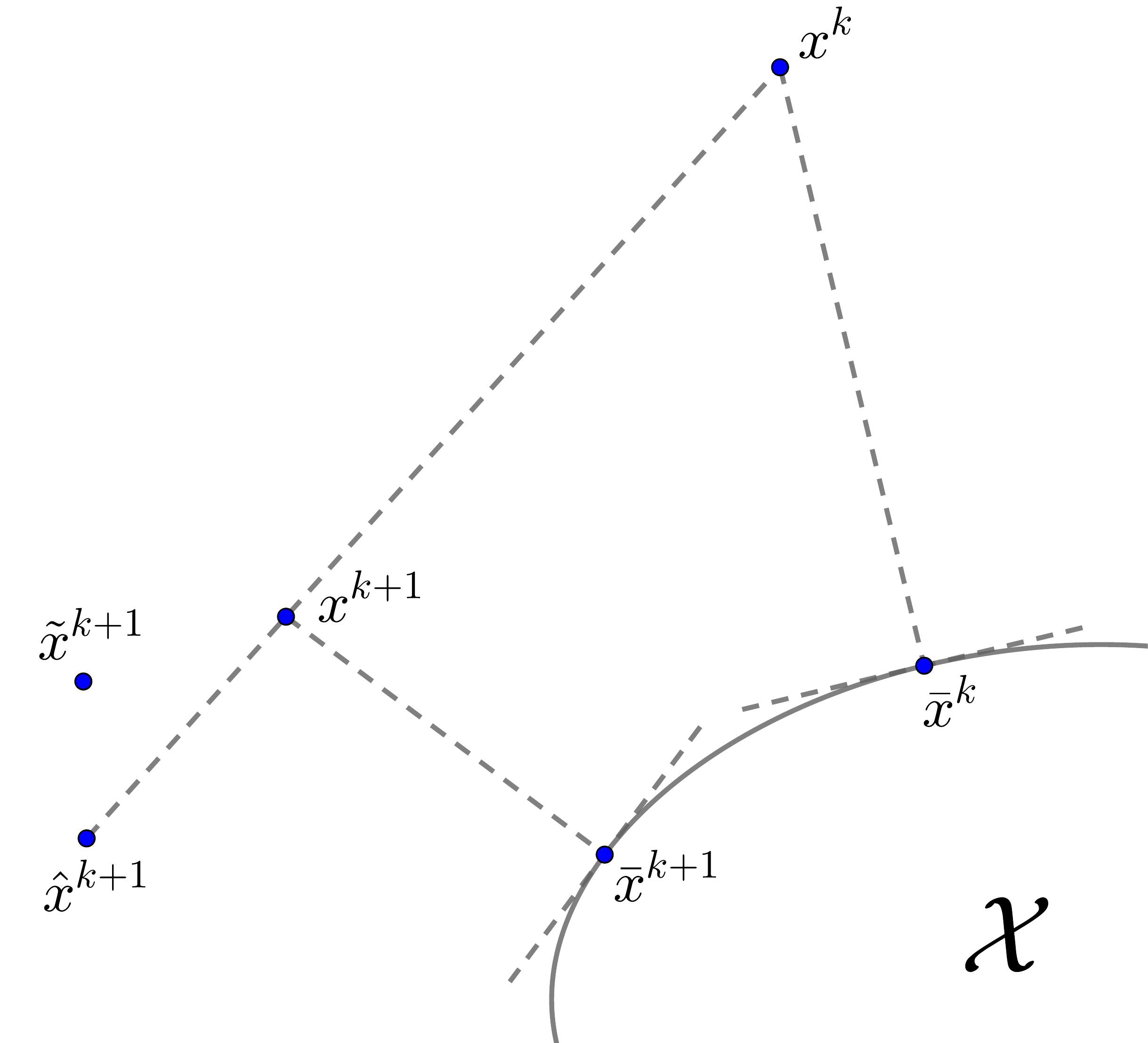}
\caption{Illustration of the geometry in the proof of Proposition~\ref{prop:convergence-rate}.}
%for the iterates of \textsf{IRPN}, their its projections onto the optimal set and the exact and approximate solutions to the $k$-th subproblem.
\label{fig:roadmap}
\end{figure}

The rest of this section is devoted to proving Proposition~\ref{prop:convergence-rate} and Theorem~\ref{thm:seq_conv}. We first prove some technical lemmas in Section \ref{sec:tech_lem}. Then, we prove Proposition~\ref{prop:convergence-rate} and Theorem~\ref{thm:seq_conv} in Sections~\ref{sec:pf-thm-3} and \ref{sec:pf-cor-3}, respectively.

\subsection{Technical Lemmas}\label{sec:tech_lem}
Throughout this subsection, Assumptions \ref{ass:smoothness} and \ref{ass:ass-eb} are in force. Recall that $\bar{x}^k$ is the projection of $x^k$ onto the optimal solution set $\mathcal{X}$; $\tilde{x}^{k+1}$ and $\hat{x}^{k+1}$ are the exact and approximate minimizers of the quadratic model $q_k$, respectively.

\begin{lemma}\label{lem:exact}
It holds for all $k$ that
\begin{equation*}
\|\tilde{x}^{k+1}-\bar{x}^k\|\leq \frac{1}{\mu_k}\|\nabla f(\bar{x}^k) - \nabla f(x^k) - H_k(\bar{x}^k - x^k)\|.
\end{equation*}
\end{lemma}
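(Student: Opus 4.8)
The plan is to exploit the two first-order optimality conditions available to us---one for the exact inner minimizer $\tilde{x}^{k+1}$ and one for the projection $\bar{x}^k$---and then combine the monotonicity of $\partial g$ with the strong monotonicity of the gradient of the quadratic model. First I would record the optimality conditions in subgradient form. Since $\tilde{x}^{k+1}$ minimizes $q_k = f_k + g$, where $\nabla f_k(x) = \nabla f(x^k) + H_k(x - x^k)$, the optimality condition $\mathbf{0}\in\nabla f_k(\tilde{x}^{k+1}) + \partial g(\tilde{x}^{k+1})$ gives $-\nabla f_k(\tilde{x}^{k+1}) \in \partial g(\tilde{x}^{k+1})$. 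Since $\bar{x}^k\in\mathcal{X}$ is optimal for problem~\eqref{eq:basic-prob}, Fact~\ref{fact:opt-fix-pt} (or directly the optimality condition $\mathbf{0}\in\nabla f(\bar{x}^k) + \partial g(\bar{x}^k)$) gives $-\nabla f(\bar{x}^k) \in \partial g(\bar{x}^k)$.

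Next I would pair these two subgradients against the displacement $\tilde{x}^{k+1} - \bar{x}^k$ and invoke the monotonicity of $\partial g$, which yields
$$
\left(\nabla f(\bar{x}^k) - \nabla f_k(\tilde{x}^{k+1})\right)^T(\tilde{x}^{k+1} - \bar{x}^k) \geq 0.
$$
The key manipulation is to insert the auxiliary gradient $\nabla f_k(\bar{x}^k)$, splitting the left-hand factor as $\left[\nabla f(\bar{x}^k) - \nabla f_k(\bar{x}^k)\right] - \left[\nabla f_k(\tilde{x}^{k+1}) - \nabla f_k(\bar{x}^k)\right]$. Because $f$ is convex we have $\nabla^2 f(x^k)\succeq 0$, hence $H_k = \nabla^2 f(x^k) + \mu_k I \succeq \mu_k I$, so $\nabla f_k$ is $\mu_k$-strongly monotone; this bounds the second bracketed term below by $\mu_k\|\tilde{x}^{k+1} - \bar{x}^k\|^2$. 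Combining gives
$$
\left(\nabla f(\bar{x}^k) - \nabla f_k(\bar{x}^k)\right)^T(\tilde{x}^{k+1} - \bar{x}^k) \geq \mu_k\|\tilde{x}^{k+1} - \bar{x}^k\|^2.
$$

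Finally I would apply the Cauchy--Schwarz inequality to the left-hand side and divide through by $\|\tilde{x}^{k+1} - \bar{x}^k\|$ (the inequality being trivial when this quantity is zero), obtaining $\mu_k\|\tilde{x}^{k+1} - \bar{x}^k\| \leq \|\nabla f(\bar{x}^k) - \nabla f_k(\bar{x}^k)\|$. Substituting the explicit form $\nabla f_k(\bar{x}^k) = \nabla f(x^k) + H_k(\bar{x}^k - x^k)$ identifies the right-hand side with $\|\nabla f(\bar{x}^k) - \nabla f(x^k) - H_k(\bar{x}^k - x^k)\|$, which is precisely the claimed bound. The only slightly delicate point---the main obstacle, such as it is---is correctly orchestrating the two monotonicity estimates of differing strengths (plain monotonicity of $\partial g$ versus $\mu_k$-strong monotonicity of $\nabla f_k$) and confirming $H_k \succeq \mu_k I$; the remaining steps are routine algebraic bookkeeping.
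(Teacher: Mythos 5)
Your argument is correct and is essentially the paper's own proof: both rest on pairing the optimality conditions $-\nabla f(\bar{x}^k)\in\partial g(\bar{x}^k)$ and $-\nabla f_k(\tilde{x}^{k+1})\in\partial g(\tilde{x}^{k+1})$ via monotonicity of $\partial g$, splitting off the term $H_k(\tilde{x}^{k+1}-\bar{x}^k)$ whose quadratic form is bounded below by $\mu_k\|\tilde{x}^{k+1}-\bar{x}^k\|^2$, and finishing with Cauchy--Schwarz. Writing the auxiliary quantity as $\nabla f(\bar{x}^k)-\nabla f_k(\bar{x}^k)$ is only a notational repackaging of the paper's $\nabla f(x^k)-\nabla f(\bar{x}^k)-H_k(x^k-\bar{x}^k)$, so there is nothing further to add.
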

\begin{proof}
If $\tilde{x}^{k+1}=\bar{x}^k$, then the inequality holds trivially. Hence, suppose that $\tilde{x}^{k+1}\neq\bar{x}^k$. As $\bar{x}^k\in \mathcal{X}$, we have $\mathbf{0}\in \nabla f(\bar{x}^k)+\partial g(\bar{x}^k)$. Moreover, the definition of $\tilde{x}^{k+1}$ yields $\mathbf{0}\in \nabla f(x^k)+H_k(\tilde{x}^{k+1}-x^k)+\partial g(\tilde{x}^{k+1})$. Using these and the monotonicity of the subdifferential mapping $\partial g$ of the closed proper convex function $g$, we obtain
\begin{eqnarray*}
 0  &\leq& \left\langle \nabla f(x^k) - \nabla f(\bar{x}^k) + H_k(\tilde{x}^{k+1}-x^k) , \bar{x}^k - \tilde{x}^{k+1} \right\rangle \\
 &=& \left\langle \nabla f(x^k) - \nabla f(\bar{x}^k) - H_k(x^k - \bar{x}^k) , \bar{x}^k - \tilde{x}^{k+1} \right\rangle \\
 & & \quad - \left\langle H_k(\bar{x}^k - \tilde{x}^{k+1}) , \bar{x}^k - \tilde{x}^{k+1} \right\rangle\\
& \leq & \| \nabla f(x^k) - \nabla f(\bar{x}^k) - H_k(x^k - \bar{x}^k)\| \cdot \|\bar{x}^k - \tilde{x}^{k+1} \| - \mu_k \|\bar{x}^k - \tilde{x}^{k+1}\|^2,
\end{eqnarray*}
which yields the desired inequality.
\end{proof}
\noindent We remark here that we did not use any special structure of $H_k$ in the proof of Lemma \ref{lem:exact} except the positive definiteness of $H_k$. Therefore, the same result will hold for any positive definite approximate Hessian $H_k$, with $\mu_k$ replaced by the minimum eigenvalue $\lambda_{\min}(H_k)$ of $H_k$.

\begin{lemma}\label{lem:ex-to-x^k}
It holds for all $k$ that
\begin{equation*}
\| \tilde{x}^{k+1} - x^k \| \leq \left(\frac{L_2}{2\mu_k}{\rm dist}(x^k,\mathcal{X}) + 2\right){\rm dist}(x^k,\mathcal{X}).
\end{equation*}
\end{lemma}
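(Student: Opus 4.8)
The plan is to combine Lemma~\ref{lem:exact}, which already controls the distance from the exact inner minimizer $\tilde{x}^{k+1}$ to the projection $\bar{x}^k$, with the triangle inequality and a standard second-order Taylor estimate. First I would split via the triangle inequality, using that $\|\bar{x}^k - x^k\| = {\rm dist}(x^k,\mathcal{X})$ by definition of the projection:
\[
\|\tilde{x}^{k+1} - x^k\| \le \|\tilde{x}^{k+1} - \bar{x}^k\| + {\rm dist}(x^k,\mathcal{X}).
\]
It then remains to bound the first summand by $\frac{L_2}{2\mu_k}{\rm dist}(x^k,\mathcal{X})^2 + {\rm dist}(x^k,\mathcal{X})$, since adding the trailing $ {\rm dist}(x^k,\mathcal{X})$ produces exactly the claimed factor $\frac{L_2}{2\mu_k}{\rm dist}(x^k,\mathcal{X}) + 2$.

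Next I would invoke Lemma~\ref{lem:exact} to get $\|\tilde{x}^{k+1} - \bar{x}^k\| \le \frac{1}{\mu_k}\|\nabla f(\bar{x}^k) - \nabla f(x^k) - H_k(\bar{x}^k - x^k)\|$ and substitute $H_k = \nabla^2 f(x^k) + \mu_k I$. Splitting off the $\mu_k I$ term gives
\[
\|\nabla f(\bar{x}^k) - \nabla f(x^k) - H_k(\bar{x}^k - x^k)\| \le \|\nabla f(\bar{x}^k) - \nabla f(x^k) - \nabla^2 f(x^k)(\bar{x}^k - x^k)\| + \mu_k\,{\rm dist}(x^k,\mathcal{X}).
\]
The key estimate is the standard consequence of Assumption~\ref{ass:smoothness}\ref{ass:L2}: since $\nabla^2 f$ is $L_2$-Lipschitz, the integral form of Taylor's theorem yields $\|\nabla f(y) - \nabla f(z) - \nabla^2 f(z)(y-z)\| \le \frac{L_2}{2}\|y-z\|^2$. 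Applying this with $y = \bar{x}^k$, $z = x^k$ bounds the first term by $\frac{L_2}{2}{\rm dist}(x^k,\mathcal{X})^2$. Dividing by $\mu_k$ gives $\|\tilde{x}^{k+1} - \bar{x}^k\| \le \frac{L_2}{2\mu_k}{\rm dist}(x^k,\mathcal{X})^2 + {\rm dist}(x^k,\mathcal{X})$, and assembling the pieces yields the result.

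The proof is essentially mechanical once the Taylor estimate is in hand, so there is no genuine obstacle; the argument is a short chain of triangle inequalities around Lemma~\ref{lem:exact}. The only point requiring a little care is that the Taylor estimate must be applied along a segment contained in the set $\mathcal{U}$ on which the Lipschitz bounds hold. This is immediate here, since $\bar{x}^k \in \mathcal{X} \subseteq {\rm dom}(g)$ and $x^k \in {\rm dom}(g)$, and ${\rm dom}(g)$ is convex (because $g$ is convex), so the segment $[x^k,\bar{x}^k]$ lies in ${\rm dom}(g) \subseteq \mathcal{U}$; thus $f$ is twice continuously differentiable with $L_2$-Lipschitz Hessian along it.
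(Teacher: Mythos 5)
Your proposal is correct and follows essentially the same route as the paper's proof: triangle inequality through $\bar{x}^k$, Lemma~\ref{lem:exact}, splitting $H_k = \nabla^2 f(x^k) + \mu_k I$, and the standard Taylor bound $\tfrac{L_2}{2}\|\bar{x}^k - x^k\|^2$ from Assumption~\ref{ass:smoothness}\ref{ass:L2}. The only difference is your explicit remark that the segment $[x^k,\bar{x}^k]$ lies in $\mathcal{U}$, a point the paper leaves implicit.
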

\begin{proof}
Using the triangle inequality, the definition of $\mu_k$, and Assumption \ref{ass:smoothness}\ref{ass:L2}, we compute
\begin{eqnarray*}
&& \| \nabla f(\bar{x}^k) - \nabla f(x^k) - H_k(\bar{x}^k - x^k) \| \nonumber \\
&\leq & \| \nabla f(\bar{x}^k) - \nabla f(x^k) - \nabla^2 f(x^k)(\bar{x}^k - x^k) \| + \mu_k\|\bar{x}^k - x^k\| \nonumber \\
&\leq & \frac{L_2}{2} \|\bar{x}^k - x^k\|^2 + \mu_k \|\bar{x}^k - x^k\|. \label{ineq:Hessian}
\end{eqnarray*}
This, together with Lemma \ref{lem:exact}, yields
\begin{eqnarray*}
\|\tilde{x}^{k+1} - x^k\| & \leq& \|\tilde{x}^{k+1} - \bar{x}^k\| + \mbox{dist}(x^k,\mathcal{X})\\
&\leq& \frac{1}{\mu_k}\|\nabla f(\bar{x}^k) - \nabla f(x^k) - H_k(\bar{x}^k - x^k)\| + \mbox{dist}(x^k,\mathcal{X})\\
& \leq& \frac{L_2}{2\mu_k} \|\bar{x}^k - x^k\|^2 +  \|\bar{x}^k - x^k\| + \mbox{dist}(x^k,\mathcal{X})\\
& =& \left(\frac{L_2}{2\mu_k}{\rm dist}(x^k,\mathcal{X}) + 2\right){\rm dist}(x^k,\mathcal{X}),
\end{eqnarray*}
as desired.
\end{proof}

\begin{lemma}\label{lem:ex-to-x^k+1}
It holds for all $k$ that
\begin{equation*}
\|\hat{x}^{k+1} - \tilde{x}^{k+1}\|\leq \frac{\eta(L_1+|1-\mu_k|)}{c}r(x^k)+ \eta \cdot r(x^k)^{1+\rho}.
\end{equation*}
\end{lemma}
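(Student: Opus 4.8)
The plan is to introduce an auxiliary proximal point and reduce everything to a single inner-product inequality coming from the monotonicity of $\partial g$ and the strong convexity induced by the regularization. Write $u := {\rm prox}_g(\hat{x}^{k+1} - \nabla f_k(\hat{x}^{k+1}))$, so that by the definition of the inner residual map $R_k$ we have $\hat{x}^{k+1} - u = R_k(\hat{x}^{k+1})$ and hence $\|\hat{x}^{k+1} - u\| = r_k(\hat{x}^{k+1})$. The triangle inequality then gives $\|\hat{x}^{k+1} - \tilde{x}^{k+1}\| \le r_k(\hat{x}^{k+1}) + \|u - \tilde{x}^{k+1}\|$, and the entire proof reduces to establishing the bound $\|u - \tilde{x}^{k+1}\| \le \tfrac{L_1 + |1-\mu_k|}{\mu_k}\, r_k(\hat{x}^{k+1})$.

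To obtain this bound I would record two subgradient inclusions. The definition of $u$ as a proximal point gives $\hat{x}^{k+1} - \nabla f_k(\hat{x}^{k+1}) - u \in \partial g(u)$, while the optimality of $\tilde{x}^{k+1}$ for the inner problem gives $-\nabla f_k(\tilde{x}^{k+1}) \in \partial g(\tilde{x}^{k+1})$ (this is Corollary~\ref{cor:opt-fix-pt-im} with $\tau = 1$, or directly $\mathbf{0}\in\nabla f_k(\tilde{x}^{k+1})+\partial g(\tilde{x}^{k+1})$). Testing the monotonicity of $\partial g$ on these two inclusions at the points $u$ and $\tilde{x}^{k+1}$, and using $\nabla f_k(\hat{x}^{k+1}) - \nabla f_k(\tilde{x}^{k+1}) = H_k(\hat{x}^{k+1} - \tilde{x}^{k+1})$ since $f_k$ is quadratic, yields $\langle R_k(\hat{x}^{k+1}) - H_k(\hat{x}^{k+1}-\tilde{x}^{k+1}),\, u - \tilde{x}^{k+1}\rangle \ge 0$.

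The crucial algebraic step is to substitute $\hat{x}^{k+1} - \tilde{x}^{k+1} = R_k(\hat{x}^{k+1}) + (u - \tilde{x}^{k+1})$ inside this inner product. Writing $w = u - \tilde{x}^{k+1}$ and $R = R_k(\hat{x}^{k+1})$, the inequality collapses to $\langle H_k w, w\rangle \le \langle (I - H_k)R,\, w\rangle$. The left side is bounded below by $\mu_k\|w\|^2$ because $H_k \succeq \mu_k I$, and Cauchy--Schwarz together with the estimate $\|I - H_k\| \le L_1 + |1-\mu_k|$ bounds the right side above by $(L_1 + |1-\mu_k|)\|R\|\,\|w\|$; the latter estimate follows from $I - H_k = (1-\mu_k)I - \nabla^2 f(x^k)$ and $\mathbf{0}\preceq\nabla^2 f(x^k)\preceq L_1 I$ (Fact~\ref{fact:bound-hess}), since the eigenvalues of $I-H_k$ lie in $[(1-\mu_k)-L_1,\,1-\mu_k]$. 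Dividing by $\mu_k\|w\|$ (the case $w=0$ being trivial) gives $\|w\| \le \tfrac{L_1+|1-\mu_k|}{\mu_k}\,r_k(\hat{x}^{k+1})$, as claimed.

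Finally I would assemble the pieces: combining the triangle inequality with the bound on $\|w\|$ gives $\|\hat{x}^{k+1}-\tilde{x}^{k+1}\| \le \big(1 + \tfrac{L_1+|1-\mu_k|}{\mu_k}\big)r_k(\hat{x}^{k+1})$, and substituting $r_k(\hat{x}^{k+1}) \le \eta\,r(x^k)^{1+\rho}$ from~\eqref{eq:inexact-cond-res} together with $\mu_k = c\,r(x^k)^\rho$, so that $r(x^k)^{1+\rho}/\mu_k = r(x^k)/c$, turns the two summands into exactly $\tfrac{\eta(L_1+|1-\mu_k|)}{c}\,r(x^k)$ and $\eta\,r(x^k)^{1+\rho}$. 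The main obstacle to anticipate is resisting the temptation to apply monotonicity directly to the pair $(\hat{x}^{k+1},\tilde{x}^{k+1})$: that route forces the coefficient $\|I+H_k\| \le 1 + L_1 + \mu_k$ and produces a strictly weaker constant. Routing through the auxiliary proximal point $u$ is precisely what makes the sharp factor $L_1 + |1-\mu_k|$ appear, because it is $I - H_k$ rather than $I + H_k$ that multiplies the residual $R$ after the substitution.
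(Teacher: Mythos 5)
Your proposal is correct and follows essentially the same route as the paper: the auxiliary point $u = \hat{x}^{k+1} - R_k(\hat{x}^{k+1})$ is exactly the point the paper works with, and your combination of the monotonicity of $\partial g$ with the quadratic term $\langle H_k w, w\rangle \ge \mu_k\|w\|^2$ is precisely the paper's invocation of the $\mu_k$-strong monotonicity of $\partial q_k$ applied to the inclusions $(I-H_k)R_k(\hat{x}^{k+1})\in\partial q_k(u)$ and $\mathbf{0}\in\partial q_k(\tilde{x}^{k+1})$. The bound $\|I-H_k\|\le L_1+|1-\mu_k|$, the final triangle inequality, and the substitution $r(x^k)^{1+\rho}/\mu_k = r(x^k)/c$ all match the paper's argument exactly.
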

\begin{proof}
The inequality holds trivially if $\hat{x}^{k+1}=\tilde{x}^{k+1}$. Hence, we assume that $\hat{x}^{k+1}\neq \tilde{x}^{k+1}$. Recall that $R_k(\hat{x}^{k+1})=\hat{x}^{k+1}-\mbox{prox}_g(\hat{x}^{k+1}-\nabla f_k(\hat{x}^{k+1}))$. Thus,
$$ R_k(\hat{x}^{k+1}) - \nabla f_k(\hat{x}^{k+1}) \in \partial g \left( \hat{x}^{k+1} - R_k(\hat{x}^{k+1}) \right). $$
Since $q_k=f_k+g$, we have
$$ R_k(\hat{x}^{k+1}) + \nabla f_k \left( \hat{x}^{k+1} - R_k(\hat{x}^{k+1}) \right) - \nabla f_k(\hat{x}^{k+1}) \in \partial q_k \left( \hat{x}^{k+1} - R_k(\hat{x}^{k+1}) \right). $$
Using the fact that $\nabla f_k(x) = \nabla f(x^k) + H_k(x-x^k)$, the above leads to
$$ (I-H_k)R_k(\hat{x}^{k+1}) \in \partial q_k(\hat{x}^{k+1} - R_k(\hat{x}^{k+1})). $$
On the other hand, we have $\mathbf{0}\in \partial q_k(\tilde{x}^{k+1})$. Since $q_k$ is $\mu_k$-strongly convex, the subdifferential mapping $\partial q_k$ is $\mu_k$-strongly monotone and thus
$$%\begin{eqnarray*}
%& & 
\left\langle (I-H_k)R_k(\hat{x}^{k+1}) , \hat{x}^{k+1} - R_k(\hat{x}^{k+1}) -\tilde{x}^{k+1} \right\rangle
%\\ &\geq&
\ge  \mu_k \|\hat{x}^{k+1} - R_k(\hat{x}^{k+1}) -\tilde{x}^{k+1}\|^2.
$$%\end{eqnarray*}
Upon applying the Cauchy-Schwarz inequality to the above, we obtain
\begin{eqnarray*}
\|\hat{x}^{k+1} - R_k(\hat{x}^{k+1}) -\tilde{x}^{k+1}\| &\leq& \frac{1}{\mu_k} \left\| (I-H_k)R_k(\hat{x}^{k+1}) \right\| \\
&\leq& \frac{1}{\mu_k} \|\nabla^2 f(x^k)-(1-\mu_k)I\| \cdot r_k(\hat{x}^{k+1})\\
&\leq& \frac{\eta(L_1+|1-\mu_k|)}{\mu_k}r(x^k)^{1+\rho}\\
&\leq& \frac{\eta(L_1+|1-\mu_k|)}{c}r(x^k),
\end{eqnarray*}
where the third inequality is due to Fact~\ref{fact:bound-hess} and the inexactness condition~\eqref{eq:inexact-cond}, and the last is due to the definition of $\mu_k$. Hence, we obtain
\begin{eqnarray*}
\|\hat{x}^{k+1} - \tilde{x}^{k+1}\| &\leq& \|\hat{x}^{k+1} - R_k(\hat{x}^{k+1}) -\tilde{x}^{k+1}\| + \|R_k(\hat{x}^{k+1})\| \\
&\leq& \frac{\eta(L_1+|1-\mu_k|)}{c}r(x^k)+ \eta \cdot r(x^k)^{1+\rho},
\end{eqnarray*}
as desired.
\end{proof}

Next, we show that Algorithm~\ref{alg:IRPN} eventually takes a unit step size; \ie, $\alpha_k=1$ and $x^{k+1}=x^k+d^k$ for all sufficiently large $k$. 
\begin{lemma}\label{lem:asymptotic-unit-step-size}
Suppose that Assumptions \ref{ass:smoothness} and \ref{ass:ass-eb} hold. Then, there exists an integer $k_0 \ge 0$ such that $\alpha_k=1$ for all $k\geq k_0$ in either of the following cases:
\begin{enumerate}[wide = 0pt, labelwidth = 2em, labelsep*=0em, itemindent = 0pt, leftmargin = \dimexpr\labelwidth + \labelsep\relax, noitemsep,topsep = 1ex, font=\normalfont, label=(\roman*)]
\item $\rho\in [0,1)$.
\item $\rho=1$ and $c,\eta$ satisfy $2 \eta L_2(L_1+2) + \kappa^2 L_2^2 + 2c\kappa L_2 \leq  6 c^2$.
\end{enumerate}
The inequality in (ii) is satisfied if we take $c\geq \kappa L_2$ and $\eta \leq \tfrac{\kappa^2 L_2}{2(L_1+1)}$.
\end{lemma}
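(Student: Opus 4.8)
The plan is to show that for all sufficiently large $k$ the backtracking line search accepts the full step, i.e.\ the descent condition~\eqref{eq:descent-cond} already holds with $i=0$. Since $\hat{x}^{k+1} = x^k + d^k$, this is equivalent to establishing
\[
F(\hat{x}^{k+1}) - F(x^k) \leq \theta\left( \ell_k(\hat{x}^{k+1}) - \ell_k(x^k) \right)
\]
for large $k$. As preparation I would invoke Proposition~\ref{prop:global-converge} to get $r(x^k)\to 0$, note that $F(x^k)\le F(x^0)$ by the descent property, and then use the Luo-Tseng EB property (Assumption~\ref{ass:ass-eb}) to deduce that for all large $k$ we have the key conversion ${\rm dist}(x^k,\mathcal{X}) \le \kappa\, r(x^k)$. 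I would also record from~\eqref{ineq:l_k-decrease} that $\Delta_k := \ell_k(x^k) - \ell_k(\hat{x}^{k+1}) \ge \frac{\mu_k}{2(1-\zeta)}\|d^k\|^2 > 0$.

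The core of the argument is a second-order refinement of the estimate used in Lemma~\ref{lem:exist-step-size}. Under the Lipschitz-Hessian Assumption~\ref{ass:smoothness}\ref{ass:L2}, the standard cubic bound gives $f(\hat{x}^{k+1}) \le f(x^k) + \nabla f(x^k)^Td^k + \tfrac12 (d^k)^T\nabla^2 f(x^k)d^k + \tfrac{L_2}{6}\|d^k\|^3$. Substituting $\nabla^2 f(x^k) = H_k - \mu_k I$, adding $g(\hat{x}^{k+1}) - g(x^k)$ to both sides, and recognizing the quadratic model yields
\[
F(\hat{x}^{k+1}) - F(x^k) \leq \left( q_k(\hat{x}^{k+1}) - q_k(x^k) \right) - \tfrac{\mu_k}{2}\|d^k\|^2 + \tfrac{L_2}{6}\|d^k\|^3 .
\]
Feeding in the inexactness condition~\eqref{eq:inexact-cond-quad}, namely $q_k(\hat{x}^{k+1}) - q_k(x^k) \le -\zeta\Delta_k$, the desired inequality reduces to
\[
\tfrac{L_2}{6}\|d^k\|^3 \leq (\zeta - \theta)\Delta_k + \tfrac{\mu_k}{2}\|d^k\|^2 .
\]
Since $\zeta > \theta$ and $\Delta_k > 0$, it suffices to prove the cleaner, $\theta,\zeta$-free condition $\tfrac{L_2}{6}\|d^k\|^3 \le \tfrac{\mu_k}{2}\|d^k\|^2$, i.e.\ $\|d^k\| \le 3\mu_k/L_2$.

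It then remains to bound $\|d^k\|$. The plan is to split $\|d^k\| \le \|\tilde{x}^{k+1} - x^k\| + \|\hat{x}^{k+1} - \tilde{x}^{k+1}\|$ and apply Lemmas~\ref{lem:ex-to-x^k} and~\ref{lem:ex-to-x^k+1}, substituting ${\rm dist}(x^k,\mathcal{X}) \le \kappa\, r(x^k)$ and $\mu_k = c\, r(x^k)^\rho$. This produces a bound of the form $\|d^k\| \le O(r(x^k))$, whose leading term is $\Theta(r(x^k))$, to be compared against the target $3\mu_k/L_2 = \tfrac{3c}{L_2}\, r(x^k)^\rho$. For $\rho\in[0,1)$ we have $r(x^k)^\rho/r(x^k)\to\infty$, so the target dominates and $\|d^k\| \le 3\mu_k/L_2$ holds for all large $k$, giving case~(i). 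For $\rho=1$ both sides are $\Theta(r(x^k))$, so the inequality persists asymptotically exactly when the coefficient of $r(x^k)$ in the upper bound for $\|d^k\|$ does not exceed $3c/L_2$; carrying the precise constants from Lemmas~\ref{lem:ex-to-x^k} and~\ref{lem:ex-to-x^k+1} (using in particular $|1-\mu_k|\to 1$) and clearing denominators turns this into a condition of the form $2\eta L_2(L_1+2) + \kappa^2 L_2^2 + 2c\kappa L_2 \le 6c^2$, which is case~(ii). The final assertion is then a direct substitution: taking $c\ge\kappa L_2$ forces $\kappa^2L_2^2\le c^2$ and $2c\kappa L_2\le 2c^2$, while $\eta \le \kappa^2 L_2/(2(L_1+1))$ together with $\tfrac{L_1+2}{L_1+1}\le 2$ forces $2\eta L_2(L_1+2)\le 2c^2$, so the left side is at most $5c^2 \le 6c^2$.

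The main obstacle is the $\rho=1$ case. There the full step is only accepted asymptotically and with genuinely tight constants, so order-of-magnitude bounds no longer suffice: one must track the exact coefficients of $r(x^k)$ coming out of the two technical lemmas, verify that the non-leading $o(r(x^k))$ terms do not violate the non-strict inequality, and ensure the resulting coefficient inequality matches the stated form. The remaining cases and the verification of the sufficient condition on $c,\eta$ are routine algebra once this bookkeeping is carried out.
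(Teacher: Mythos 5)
Your proposal is correct and follows essentially the same route as the paper: the cubic Taylor bound leading to \eqref{ineq:drop}, reduction of the unit-step acceptance to a bound on $\|d^k\|$ in terms of $\mu_k$, the decomposition of $\|d^k\|$ via Lemmas~\ref{lem:ex-to-x^k} and~\ref{lem:ex-to-x^k+1} combined with the error bound, and the same asymptotic comparison distinguishing $\rho\in[0,1)$ from $\rho=1$. The one divergence is that you discard the $(\zeta-\theta)\Delta_k$ slack and target $\|d^k\|\le 3\mu_k/L_2$ rather than the paper's weaker threshold $\bigl(1+\tfrac{\zeta-\theta}{1-\zeta}\bigr)\tfrac{3\mu_k}{L_2}$ in \eqref{ineq:unit-step-condition}; the paper keeps that strictly positive slack precisely so that, in the $\rho=1$ case, the vanishing term $\eta\, r(x^k)^{1+\rho}$ is absorbed for free, whereas your tighter version survives only because that term happens to cancel exactly against the $-\eta\, r(x^k)$ hidden in $\eta|1-\mu_k|/c = \eta(1-c\,r(x^k))/c$ --- a cancellation you flag as the remaining bookkeeping but do not carry out, and which you could avoid entirely by retaining the slack as the paper does.
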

\begin{proof}
By the Fundamental Theorem of Calculus and Assumption~\ref{ass:smoothness}\ref{ass:L2}, we compute
\begin{eqnarray*}
& & f(\hat{x}^{k+1}) - f(x^k) \\
&=& \int_0^1 \nabla f \left( x^k + t(\hat{x}^{k+1} - x^k) \right)^T ( \hat{x}^{k+1} - x^k ) \, dt\\
&=& \int_0^1 \left[ \nabla f \left( x^k + t(\hat{x}^{k+1} - x^k) \right) - \nabla f(x^k) \right]^T ( \hat{x}^{k+1} - x^k ) \,dt \\
& & \quad+ \nabla f(x^k)^T ( \hat{x}^{k+1} - x^k ) \\
&=& \int_0^1 \int_0^1 t (\hat{x}^{k+1} - x^k)^T\left[ \nabla^2 f \left( x^k + st(\hat{x}^{k+1} - x^k) \right) - \nabla^2 f(x^k) \right] (\hat{x}^{k+1}-x^k) \, ds\,dt\\
& & \quad + \int_0^1 t (\hat{x}^{k+1}-x^k)^T\nabla^2 f(x^k) (\hat{x}^{k+1}-x^k) \, dt + \nabla f(x^k)^T \left( \hat{x}^{k+1} - x^k \right) \\
&\leq& \frac{L_2}{6}\|\hat{x}^{k+1} - x^k\|^3 + \frac{1}{2}(\hat{x}^{k+1}-x^k)^T\nabla^2 f(x^k)(\hat{x}^{k+1}-x^k) + \nabla f(x^k)^T(\hat{x}^{k+1} - x^k).
\end{eqnarray*}
Therefore, we have
\begin{eqnarray}
& & F(\hat{x}^{k+1}) - F(x^k) + q_k(x^k) - q_k(\hat{x}^{k+1}) \nonumber \\
&=& f(\hat{x}^{k+1}) - f(x^k) - \nabla f(x^k)^T(\hat{x}^{k+1}-x^k) \nonumber \\
& & \quad - \frac{1}{2}(\hat{x}^{k+1}-x^k)^T \nabla^2 f(x^k)(\hat{x}^{k+1}-x^k) - \frac{\mu_k}{2}\|\hat{x}^{k+1}-x^k\|^2 \nonumber \\
&\leq& \frac{L_2}{6}\|\hat{x}^{k+1} - x^k\|^3 - \frac{\mu_k}{2}\|\hat{x}^{k+1}-x^k\|^2. \label{ineq:drop}
\end{eqnarray}
It then follows from~\eqref{eq:inexact-cond-quad},~\eqref{ineq:l_k-decrease}, and~\eqref{ineq:drop} that
\begin{eqnarray*}
& & F(\hat{x}^{k+1})-F(x^k) \\
&=& \left( F(\hat{x}^{k+1})-F(x^k) + q_k(x^k) - q_k(\hat{x}^{k+1}) \right) + \left( q_k(\hat{x}^{k+1}) - q_k(x^k) \right)\\
&\leq& \frac{L_2}{6}\|d^k\|^3 - \frac{\mu_k}{2}\|d^k\|^2 + \zeta\left( \ell_k(\hat{x}^{k+1}) - \ell_k(x^k) \right) \\
&=& \frac{L_2}{6}\|d^k\|^3 - \frac{\mu_k}{2}\|d^k\|^2 + \theta\left( \ell_k(\hat{x}^{k+1}) - \ell_k(x^k) \right) + (\zeta - \theta)\left( \ell_k(\hat{x}^{k+1}) - \ell_k(x^k) \right) \\
&\leq& \frac{L_2}{6}\|d^k\|^3 + \theta\left( \ell_k(\hat{x}^{k+1}) - \ell_k(x^k) \right) - \left(1 + \frac{\zeta - \theta}{1-\zeta}\right)\frac{\mu_k}{2}\|d^k\|^2.
\end{eqnarray*}
In particular, if 
\begin{equation}\label{ineq:unit-step-condition}
\|d^k\|=\|\hat{x}^{k+1} - x^k\| \leq \left(1 + \frac{\zeta - \theta}{1-\zeta}\right)\frac{3c \cdot r(x^k)^\rho}{L_2},
\end{equation}
then $F(\hat{x}^{k+1})-F(x^k) \le \theta\left( \ell_k(\hat{x}^{k+1}) - \ell_k(x^k) \right)$, which, according to the line search strategy in Algorithm~\ref{alg:IRPN}, would lead to a unit step size $\alpha_k=1$. Hence, it remains to determine for what choices of the parameters $c,\eta$ would inequality~\eqref{ineq:unit-step-condition} hold. By Lemmas~\ref{lem:ex-to-x^k} and~\ref{lem:ex-to-x^k+1}, for all sufficiently large $k$,
%\begin{equation}\label{ineq:x^k-x^k+1}
\begin{eqnarray*}
\|\hat{x}^{k+1} - x^k\| & \leq & \frac{\eta(L_1 + |1 - \mu_k|)}{c}r(x^k)+ \eta \cdot r(x^k)^{1+\rho}\\
& & \quad + \left(\frac{L_2}{2c \cdot r(x^k)^\rho}\mbox{dist}(x^k,\mathcal{X}) + 2\right)\mbox{dist}(x^k,\mathcal{X})\\
& \leq & \frac{\eta(L_1 + |1 - \mu_k|)}{c}r(x^k)+ \eta \cdot r(x^k)^{1+\rho} \\
& & \quad + \frac{\kappa^2 L_2}{2c}r(x^k)^{2-\rho} + 2\kappa \cdot r(x^k).
\end{eqnarray*}
%\end{equation}
Hence, a sufficient condition for inequality~\eqref{ineq:unit-step-condition} to hold is
\begin{eqnarray}
&& \frac{\eta(L_1 + |1 - \mu_k|)}{c}r(x^k)+ \eta \cdot r(x^k)^{1+\rho}  + \frac{\kappa^2 L_2}{2c}r(x^k)^{2-\rho} + 2\kappa \cdot r(x^k) \nonumber \\
& \leq & \left(1 + \frac{\zeta - \theta}{1-\zeta}\right)\frac{3c \cdot r(x^k)^\rho}{L_2}. \label{ineq:c-condition}
\end{eqnarray}
If $\rho\in [0,1)$, then since $r(x^k)\rightarrow 0$, we see that inequality~\eqref{ineq:c-condition} holds for all sufficiently large $k$. On the other hand, if $\rho = 1$, then since $\zeta>\theta$ and $\mu_k\in(0,1)$ for all sufficiently large $k$, inequality~\eqref{ineq:c-condition} holds for all sufficiently large $k$ if 
\begin{align*}
\frac{\eta (L_1+1)}{c} +\frac{\kappa^2 L_2}{2c} + 2\kappa & \leq \frac{3c}{L_2},
\end{align*}
which can be satisfied by taking $c\geq \kappa L_2$ and $\eta\leq \tfrac{\kappa^2 L_2}{2(L_1+1)}$.
\end{proof}

\subsection{Proof of Proposition \ref{prop:convergence-rate}}\label{sec:pf-thm-3}
By Lemma \ref{lem:asymptotic-unit-step-size}, in all three cases of Proposition~\ref{prop:convergence-rate}, we have $\alpha_k=1$ and hence $\hat{x}^{k+1}=x^{k+1}$ for all sufficiently large $k$. Consequently, all the results derived in Section~\ref{sec:tech_lem} will continue to hold if we replace $\hat{x}^{k+1}$ by $x^{k+1}$ when $k$ is sufficiently large. From Proposition \ref{prop:lip-resi-map}, we have
\begin{eqnarray}
\|x^{k+1} - x^k\| &\leq& \frac{\eta(L_1 + |1 - \mu_k|)}{c}r(x^k)+ \eta \cdot r(x^k)^{1+\rho} \nonumber \\
& & \quad+ \left(\frac{L_2}{2c\cdot r(x^k)^\rho} {\rm dist}(x^k,\mathcal{X}) + 2\right){\rm dist}(x^k,\mathcal{X}) \nonumber \\
&\leq& \Bigg(\frac{\eta(L_1 + |1 - \mu_k|)(L_1+2)}{c}+ \eta (L_1+2) r(x^k)^{\rho} \nonumber \\
& & \quad + \frac{L_2}{2c \cdot r(x^k)^\rho}{\rm dist}(x^k,\mathcal{X}) + 2\Bigg){\rm dist}(x^k,\mathcal{X}).  \label{ineq:x^k-x^k+1-2}
\end{eqnarray}
By the Luo-Tseng EB property~\eqref{eq:def-eb}, we have ${\rm dist}(x,\mathcal{X})\leq \kappa \cdot r(x)$ whenever $r(x)$ is sufficiently small. Since $r(x^k)\rightarrow 0$, we see from~\eqref{ineq:x^k-x^k+1-2} that for all sufficiently large $k$, 
\begin{equation*}
\|x^{k+1} - x^k\| = O(\mbox{dist}(x^k,\mathcal{X})).
\end{equation*}
Using the non-expansiveness of ${\rm prox}_g$ and the fact that $\nabla f_k(x^{k+1}) = \nabla f(x^k) + H_k(x^{k+1}-x^k)$, we have
\begin{eqnarray}
&& \| \mbox{prox}_g(x^{k+1} - \nabla f(x^{k+1})) - \mbox{prox}_g(x^{k+1} - \nabla f_k(x^{k+1}))\| \nonumber \\
& \leq & \| \nabla f(x^{k+1}) - \nabla f(x^k) - H_k(x^{k+1} - x^k) \|. \label{ineq:prox}
\end{eqnarray}
It follows that for all sufficiently large $k$,
\begin{eqnarray}
{\rm dist}(x^{k+1},\mathcal{X}) & \leq&  \kappa \cdot r(x^{k+1}) \nonumber \\
& \leq & \kappa \left( \|R(x^{k+1}) - R_k(x^{k+1})\|+ \|R_k(x^{k+1})\| \right) \nonumber \\
& \leq & \kappa \| \mbox{prox}_g(x^{k+1} - \nabla f(x^{k+1})) - \mbox{prox}_g(x^{k+1} - \nabla f_k(x^{k+1}))\| \nonumber \\
& & \quad + \kappa \eta \cdot r(x^k)^{1+\rho} \nonumber \\
& \leq & \kappa\| \nabla f(x^{k+1}) - \nabla f(x^k) - H_k(x^{k+1} - x^k) \| + \kappa\eta \cdot r(x^k)^{1+\rho} \nonumber \\
& \leq & \frac{\kappa L_2}{2}\| x^{k+1}- x^k \|^2 + \kappa \mu_k \|x^{k+1}- x^k\|  + \kappa\eta\cdot r(x^k)^{1+\rho} \nonumber \\
& \leq & \frac{\kappa L_2}{2}\| x^{k+1}- x^k \|^2 + c\kappa (L_1+2)^\rho {\rm dist}(x^k,\mathcal{X})^\rho \|x^{k+1}- x^k\| \nonumber \\
& & \quad + \kappa\eta (L_1+2)^{1+\rho} \mbox{dist}(x^k,\mathcal{X})^{1+\rho}\label{eq:lin-conv-cond} \\
& \leq & O({\rm dist}(x^k,\mathcal{X})^{1+\rho}), \label{ineq:proof-main}
\end{eqnarray}
where the third inequality is due to the inexactness condition~\eqref{eq:inexact-cond}, the fourth follows from~\eqref{ineq:prox}, the sixth is due to the definition of $\mu_k$ and Proposition \ref{prop:lip-resi-map}, and the last follows from~\eqref{ineq:x^k-x^k+1-2}.

For the case where $\rho=0$, in order to establish the linear convergence of the sequence $\{{\rm dist}(x^k,\mathcal{X})\}_{k\ge0}$, we need the constant in the big-O notation in~\eqref{ineq:proof-main} to be strictly less than 1. Upon inspecting~\eqref{ineq:x^k-x^k+1-2} and~\eqref{eq:lin-conv-cond}, we see that this can be guaranteed if
$$
\kappa\eta (L_1+2) + c\kappa \left(\frac{\eta (L_1 + |1 - c|)(L_1+2)}{c} + \eta (L_1+2) + 2 \right) < 1.
$$
The above inequality can be satisfied by taking $c\leq \tfrac{\kappa}{4}$ and $\eta \leq \tfrac{1}{2(L_1+3)^2}$. \qed

\subsection{Proof of Theorem \ref{thm:seq_conv}}\label{sec:pf-cor-3}
By the Luo-Tseng EB property~\eqref{eq:def-eb} and Proposition~\ref{prop:global-converge}, we have ${\rm dist}(x^k,\mathcal{X}) \rightarrow 0$. This implies that in all three cases of Proposition~\ref{prop:convergence-rate}, there exists an integer $K_1\ge0$ such that ${\rm dist}(x^{k+1},\mathcal{X})\leq \gamma\cdot{\rm dist}(x^k,\mathcal{X})$ for all $k\geq K_1$, where $\gamma\in(0,1)$ is the constant in case (i) of Proposition~\ref{prop:convergence-rate}. In addition, by~\eqref{ineq:x^k-x^k+1-2}, there exist a real number $\sigma>0$ and an integer $K_2\ge0$ such that $\|x^{k+1} - x^k\|\leq \sigma\cdot\mbox{dist}(x^k,\mathcal{X})$ for all $k\geq K_2$. Lastly, since ${\rm dist}(x^k,\mathcal{X}) \rightarrow 0$, given any $\epsilon>0$, there exists an integer $K_3\ge0$ such that ${\rm dist}(x^k,\mathcal{X})\leq (1-\gamma)\epsilon/\sigma$ for all $k\geq K_3$. Upon taking $K = \max\{K_1,K_2,K_3\}$, we have
\begin{equation*}
\begin{split}
\|x^{k_1} - x^{k_0}\| & \leq \sum_{i=k_0}^{k_1-1}\|x^{i+1} - x^i\| \leq \sigma \sum_{i=k_0}^{k_1 - 1}\mbox{dist}(x^i,\mathcal{X}) \\ %\leq \sigma \sum_{i=0}^{k_1-k_0-1}\gamma^i\cdot\mbox{dist}(x^{k_0},\mathcal{X}) \\
& \leq \sigma \sum_{i=0}^{\infty}\gamma^i\cdot\mbox{dist}(x^{k_0},\mathcal{X}) \leq \frac{\sigma}{1 - \gamma}\cdot\mbox{dist}(x^{k_0},\mathcal{X}) \leq \epsilon
\end{split}
\end{equation*}
for all $k_1\geq k_0\geq K$. It follows that $\{x^k\}_{k\geq 0}$ is a Cauchy sequence and hence converges to some $x^{*}$. This, together with ${\rm dist}(x^k,\mathcal{X}) \rightarrow 0$ and the fact that $\mathcal{X}$ is closed, implies that $x^{*}\in\mathcal{X}$. 

Now, it remains to establish the convergence rate of the sequence $\{x^k\}_{k\geq 0}$ in cases (i), (ii), and (iii). From the above analysis, it is immediate that 
$$\| x^{k+i} - x^{k+1}\| \leq \frac{\sigma}{1-\gamma}\cdot{\rm dist}(x^{k+1},\mathcal{X}) $$
for all integers $i\geq 1$ and $k\geq K$. Upon taking $i\rightarrow\infty$, we have
\begin{equation}
\label{eq:step-seq-conv}
\|x^{k+1} - x^{*}\| \leq \frac{\sigma}{1-\gamma}\cdot{\rm dist}(x^{k+1},\mathcal{X}).
\end{equation}
In case (i), Proposition~\ref{prop:convergence-rate} implies that the sequence $\{\mbox{dist}(x^{k+1},\mathcal{X})\}_{k\ge0}$ converges $R$-linearly to $0$. Hence, by \eqref{eq:step-seq-conv}, the sequence $\{x^k\}_{k\geq 0}$ converges $R$-linearly to $x^{*}$. In case (ii), using Proposition~\ref{prop:convergence-rate} and \eqref{eq:step-seq-conv}, we obtain 
$$ \|x^{k+1} - x^{*}\| = O(\mbox{dist}(x^k,\mathcal{X})^{1+\rho}) \leq O(\|x^k - x^{*}\|^{1+\rho}), $$
which implies that the sequence $\{x^k\}_{k\geq 0}$ converges $Q$-superlinearly to $x^{*}$ with order $1 + \rho$. The same arguments show that the sequence $\{x^k\}_{k\geq 0}$ converges $Q$-quadratically to $x^{*}$ in case (iii). This completes the proof. \qed

\section{Numerical Experiments}\label{sec:numerical}
In this section, we study the numerical performance of our proposed \textsf{IRPN} and compare it with some existing algorithms. We focus on the $\ell_1$-regularized logistic regression problem, which takes the following form:
\begin{equation}\label{eq:ell_1-logreg}
\min_{x\in\mathbb{R}^n} \Bigg\{ F(x) = \underbrace{\frac{1}{m}\sum_{i=1}^m \log\left(1+\exp(-b_i\cdot a_i^Tx)\right)}_{f(x)} + \underbrace{\lambda\|x\|_1}_{g(x)} \Bigg\}. 
\end{equation}
Here, $a_1,\ldots,a_m\in\mathbb{R}^n$ are given data samples; $b_1,\ldots,b_m\in\{1,-1\}$ are given labels; $\lambda>0$ is a given regularization parameter. Problem~\eqref{eq:ell_1-logreg} arises in linear classification tasks in machine learning and is a standard benchmark problem for testing the efficiency of different algorithms for solving problem~\eqref{eq:basic-prob}. 
%Clearly, \eqref{eq:ell_1-logreg} is of the form \eqref{eq:basic-prob} with $f$ being the logistic loss function and $g(x) = \lambda\|x\|_1$.
In our experiments, we use the data sets \textsf{colon-cancer}, \textsf{rcv1}, and \textsf{news20} from~\cite{LIBSVM-Data}
%\footnote{These data sets are available at https://www.csie.ntu.edu.tw/\~{}cjlin/libsvmtools/datasets.} 
and set $\lambda = 5\times 10^{-4}$. The sizes of and the numbers of non-zero entries in these data sets are listed in Table \ref{tab:data}. Since $m<n$ in all three data sets, the corresponding objective functions $F$ in~\eqref{eq:ell_1-logreg} are not strongly convex. However, due to Fact \ref{fact:eb-holds}, problem~\eqref{eq:ell_1-logreg} possesses the Luo-Tseng EB property~\eqref{eq:def-eb}. Hence, both Assumptions \ref{ass:smoothness} and \ref{ass:ass-eb} are satisfied by problem~\eqref{eq:ell_1-logreg} and our convergence analysis of \textsf{IRPN} applies.

\renewcommand{\arraystretch}{1.5}
\begin{table}[h]
	\centering
	\caption{Tested data sets.}
	\label{tab:data}
	\begin{tabular}{c c c c}
		\hline
		Data set & $n$ & $m$ & \# of nnz \\ 
		\hline \hline
		\textsf{colon-cancer} & 2000 & 62 & 124000 \\ \hline
		\textsf{rcv1} & 47236 & 20242 & 1498952 \\ \hline
		\textsf{news20} & 1355191 & 19996 & 9097916 \\
		\hline
	\end{tabular}
\end{table}

%Due to the page limit, experiment on other data sets are presented in the supplementary material.

All experiments are coded in MATLAB (R2017b) and run on a Dell desktop with a 3.50-GHz Intel Core E3-1270 v3 processor and 32 GB of RAM.\footnote{The code can be downloaded from \url{https://github.com/ZiruiZhou/IRPN}.} We next present the list of tested algorithms and discuss their implementation details below.

\textsf{FISTA}: The description of this algorithm can be found in \cite{beck2009fast}. In our experiments, we use the constant step size $1/L_1$, where $L_1 = \|A\|^2/(4m)$ and $A=[a_1 \, \cdots \, a_m]\in\mathbb{R}^{n\times m}$.\footnote{$\|A\|^2$ is computed via the MATLAB code \textsf{lambda = eigs(A*A',1,'LM')}.} It can be verified that $L_1$ is the Lipschitz constant of $\nabla f$ in~\eqref{eq:ell_1-logreg}. In addition, we restart the algorithm if $(y^{k-1} - x^{k})^T(x^{k} - x^{k-1})>0$ for some $k$, where $y^{k-1}$ is the extrapolation point in iteration $k$. Such restarting strategy has been found to be empirically efficient \cite{OC15,WCP17} and it indeed improves the performance of \textsf{FISTA} for solving problem~\eqref{eq:ell_1-logreg} in our experiments.

\textsf{SpaRSA}: This algorithm is a non-monotone proximal gradient method, whose description can be found in \cite{WNF09}. Following the notations in \cite{WNF09}, we set in our experiments $\sigma = 10^{-4}$, $\alpha_{\min} = 10^{-8}$, $\alpha_{\max} = 10^8$, $M = 5$, $\alpha_0 = 1$, and
$$ \alpha_k = \max\left\{\alpha_{\min}, \min\left\{\alpha_{\max}, \frac{\|\Delta x\|^2}{|\Delta G^T\Delta x|}\right\}\right\} $$
for $k\geq 1$, where $\Delta x = x^k - x^{k-1}$ and $\Delta G = \nabla f(x^k) - \nabla f(x^{k-1})$. 

\textsf{CGD}: This algorithm is the coordinate gradient descent method, whose general description can be found in \cite{tseng2009coordinate}. It has been successfully applied to solve problem \eqref{eq:ell_1-logreg}; see, \eg,~\cite{YCHL10,YT11}. Given the current iterate $x^k$, we randomly choose a coordinate index $i_k\in\{1,\ldots,n\}$ and compute a descent direction $d^k\in\mathbb{R}^n$ by letting $d^k_i = 0$ for all $i\neq i_k$ and
$$ d^k_{i_k} = \argmin_{s\in\mathbb{R}} \left\{ \frac{1}{2}H_{i_ki_k}s^2 + g_{i_k}s + \lambda|x_{i_k}^k + s| \right\}, $$
where $H_{i_ki_k} = [\nabla^2 f(x^k)]_{i_ki_k} + \nu$ with $\nu = 10^{-6}$ and $g_{i_k} = [\nabla f(x^k)]_{i_k}$. Then, we choose $\alpha_k$ to be the largest element in $\{\beta^j\}_{j\ge0}$ satisfying 
$$ F(x^k + \alpha_k d^k) \leq F(x^k) + \sigma\alpha_k\left( \nabla f(x^k)^Td^k + \lambda \|x^k + \alpha_k d^k\|_1 - \lambda\|x^k\|_1\right), $$
where we set $\beta = 0.25$ and $\sigma = 0.5$ in our experiments, and set $x^{k+1} = x^k + \alpha_kd^k$. To make the number of iterations of \textsf{CGD} comparable to those of \textsf{FISTA} and \textsf{SpaRSA}, we refer to every $n$ coordinate updates as an iteration of \textsf{CGD}. In addition, while the operations in each iteration of \textsf{FISTA} and \textsf{SpaRSA} can be vectorized in MATLAB, those of \textsf{CGD} cannot and require a {\em for} loop, which is inefficient in MATLAB. To remedy this issue, we implement \textsf{CGD} in MATLAB as a C source MEX-file.

\textsf{IRPN}: This is the family of inexact SQA methods proposed in this paper, whose description can be found in Algorithm \ref{alg:IRPN}. In our experiments, we set $\theta = \beta = 0.25$, $\zeta = 0.4$, $c = 10^{-6}$, and $\eta = 0.5$. We test the three values $0$, $0.5$, and $1$ for $\rho$, which, according to Theorem \ref{thm:seq_conv}, result in the local linear, superlinear, and quadratic convergence of \textsf{IRPN}, respectively. Notice that in the context of problem~\eqref{eq:ell_1-logreg}, each inner problem of \textsf{IRPN} is to minimize the sum of a strongly convex quadratic function and the non-smooth convex function $\lambda\|x\|_1$. Hence, it can be solved using the coordinate descent algorithm with simple updates; see, \eg, \cite{YHL12}. Due to the same reason as \textsf{CGD}, we code the inner solver of \textsf{IRPN} in MATLAB as a C source MEX-file. Moreover, we refer to every $n$ coordinate updates as an iteration of the inner solver; \ie, an inner iteration of \textsf{IRPN}. 

\textsf{newGLMNET}: The description of this algorithm can be found in \cite{YHL12}. In our experiments, we implement it as a special case of \textsf{IRPN}. In particular, we set $c = 10^{-12}$ as suggested in \cite{YHL12} and $\rho = 0$ in Algorithm \ref{alg:IRPN}. Moreover, instead of the stopping criterion~\eqref{eq:inexact-cond}, \textsf{newGLMNET} uses a heuristic stopping criterion for the inner problem~\cite{YHL12}. Specifically, in the $k$-th iteration, the inner solver is terminated when $r_k(\hat{x}^{k+1})\leq \epsilon_{\rm in}$, where $\epsilon_{\rm in}=0.1$ initially and $\epsilon_{\rm in} = \epsilon_{\rm in}/4$ if the stopping criterion is satisfied after only one inner iteration.

\smallskip
It is worth mentioning the computation of the Hessian-vector product for the logistic loss function, which is explained in detail in \cite{YHL12} and is crucial to the success of Newton-type methods for solving problem~\eqref{eq:ell_1-logreg}. For the logistic loss function $f$ in \eqref{eq:ell_1-logreg},
%$$ f(x) = \frac{1}{m}\sum_{i=1}^m \log\left(1+\exp(-b_i\cdot a_i^Tx)\right), $$
one can verify by a direct computation that $\nabla^2 f(x) = ADA^T$, where $A=[a_1 \, \cdots \, a_m]\in\mathbb{R}^{n\times m}$ and $D$ is a diagonal matrix with
$$ D_{ii} = \frac{\exp(-b_i\cdot a_i^Tx)}{\left(\exp(-b_i\cdot a_i^Tx) + 1\right)^2}, \quad i=1,\ldots,m. $$
Notice that $A$ is sparse in many real data sets such as \textsf{colon-cancer}, \textsf{rcv1}, and \textsf{news20}. Hence, $\nabla^2 f(x)v$ for any $v\in\mathbb{R}^n$ can be evaluated efficiently in our experiments.

We initialize all the tested algorithms by the same point. In order to see the impact of different initial points on the performance of these algorithms, we choose $x^0 = 0$ and $x^0 = 10\xi$, where the entries of $\xi$ are sampled {\em i.i.d.} from the standard normal distribution. All the algorithms are terminated if the iterate $x^k$ satisfies $r(x^k)\leq \epsilon_0$, where $\epsilon_0 = 10^{-4}$, $10^{-6}$, and $10^{-8}$ in our experiments. 

The computational results on the data sets \textsf{colon-cancer}, \textsf{rcv1}, and \textsf{news20} are presented in Tables \ref{tab:cc}, \ref{tab:rcv1}, and \ref{tab:new20}, respectively. Each table has two sub-tables, which show the results with the two initial points $x^0 = 0$ and $x^0 = 10\xi$, respectively. In each sub-table, we record the number of outer iterations, the number of inner iterations, and the CPU time (in seconds) of all the tested algorithms for achieving the required accuracy. We note that \textsf{FISTA} is the only one among these algorithms that requires the Lipschitz constant $L_1$. The time for computing $L_1$ is added to the time of \textsf{FISTA} and can be found at the bottom of each table. From Tables \ref{tab:cc}, \ref{tab:rcv1}, and~\ref{tab:new20}, we can see that the SQA methods (\textsf{newGLMNET} and \textsf{IRPN}) perform the best among all the tested algorithms. In particular, we have the following observations:
\begin{enumerate}[wide = 0pt, labelwidth = 2em, labelsep*=0em, itemindent = 0pt, leftmargin = \dimexpr\labelwidth + \labelsep\relax, noitemsep,topsep = 1ex, font=\normalfont, label={(S\arabic*)}]
	\resetspb
	\item[\subpb] The SQA methods substantially outperform \textsf{FISTA}. \smallskip
	\item[\subpb] \textsf{SpaRSA} and \textsf{CGD} are competitive with the SQA methods when $\epsilon_0 = 10^{-4}$, while the SQA methods outperform \textsf{SpaRSA} and \textsf{CGD} when $\epsilon_0 = 10^{-6}$ and $10^{-8}$. \smallskip
	\item[\subpb] As the parameter $\rho$ in \textsf{IRPN} increases from $0$ to $1$, the number of outer iterations needed by \textsf{IRPN} decreases, while the number of inner iterations within each outer iteration increases. The overall performance of $\rho=0$ is competitive with that of $\rho = 0.5$ and better than that of $\rho = 1$.
\end{enumerate}
It is worth noting that the number of inner iterations needed by \textsf{CGD} is consistently the least among the tested algorithms. However, since it requires a line search procedure after every coordinate update, its overall performance is worse than that of the SQA methods, which only require one line search procedure in each outer iteration. Such drawback of \textsf{CGD} is also noted in \cite{YHL12}.

\setlength{\tabcolsep}{6pt}
\renewcommand{\arraystretch}{1.5}
\begin{table}[t]
	\centering
	\caption{Computational results on \textsf{colon-cancer}.}
	\label{tab:cc}
	\begin{subtable}[t]{\linewidth}
		\centering
		\caption{$x^0 = 0$}
		\label{stab:cc1}
		\begin{tabular}{c c c  c  c c c c c}
			\hline 
			\multirow{2}{*}{Tol.} & \multirow{2}{*}{Index} & \multirow{2}{*}{\textsf{FISTA}$^*$} & \multirow{2}{*}{\textsf{SpaRSA}} & \multirow{2}{*}{\textsf{CGD}} & \multirow{2}{*}{\textsf{newG.}} & \textsf{IRPN} & \textsf{IRPN} & \textsf{IRPN} \\
			& & & & & & $\rho = 0$ & $\rho = 0.5$ & $\rho = 1$ \\
			\hline \hline
			\multirow{3}{*}{$10^{-4}$} & outer iter. & -- & -- & -- & 10 & 6 & 4 & 4 \\
			& inner iter. & 904 & 104 & 18 & 33 & 26 & 37 & 87 \\
			& time & 0.41 & 0.03 & 0.08 & 0.03 & \textbf{0.02} & 0.03 & 0.06 \\
			%& obj. value & 0.2458 & 0.2455 & 0.2450 & 0.2451 & 0.2451 & 0.2450 & 0.2449 \\
			\hline 
			\multirow{3}{*}{$10^{-6}$} & outer iter. & -- & -- & -- & 19 & 14 & 5 & 5 \\
			& inner iter. & 3856 & 397 & 55 & 92 & 84 & 85 & 183 \\
			& time & 1.78 & 0.13 & 0.32 & \textbf{0.06} & \textbf{0.06} & \textbf{0.06} & 0.13 \\
			%& obj. value & 0.2446 & 0.2446 & 0.2446 & 0.2446 & 0.2446 & 0.2446 & 0.2446 \\
			\hline
			\multirow{3}{*}{$10^{-8}$} & outer iter. & -- & -- & -- & 25 & 24 & 6 & 6 \\
			& inner iter. & 5202 & 756 & 100 & 144 & 162 & 142 & 273 \\
			& time & 2.30 & 0.23 & 0.40  & \textbf{0.08} & 0.14 & 0.10 & 0.18 \\
			%& obj. value & 0.2446 & 0.2446 & 0.2446 & 0.2446 & 0.2446 & 0.2446 & 0.2446 \\
			\hline
		\end{tabular}
	\end{subtable}
	
	\bigskip
	\bigskip
	\begin{subtable}[t]{\linewidth}
		\centering
		\caption{$x^0 = 10\xi$}
		\label{stab:cc2}
		\begin{tabular}{c c c c c c c c c}
			\hline 
			\multirow{2}{*}{Tol.} & \multirow{2}{*}{Index} & \multirow{2}{*}{\textsf{FISTA}$^*$} & \multirow{2}{*}{\textsf{SpaRSA}} & \multirow{2}{*}{\textsf{CGD}} & \multirow{2}{*}{\textsf{newG.}} & \textsf{IRPN} & \textsf{IRPN} & \textsf{IRPN} \\
			& & & & & & $\rho = 0$ & $\rho = 0.5$ & $\rho = 1$ \\
			\hline \hline
			\multirow{3}{*}{$10^{-4}$} & outer iter. & -- & -- & -- & 14 & 11 & 10 & 7 \\
			& inner iter. & 1185 & 124 & 33 & 39 & 39 & 110 & 126 \\
			& time & 0.43 & 0.04 & 0.13 & \textbf{0.03} & \textbf{0.03} & 0.09 & 0.19 \\
			%& obj. value & 0.2458 & 0.2454 & 0.2453 & 0.2452 & 0.2452 & 0.2447 & 0.2448 \\
			\hline
			\multirow{3}{*}{$10^{-6}$} & outer iter. & -- & -- & -- & 25 & 17 & 9 & 8 \\
			& inner iter. & 3536 & 422 & 71 & 101 & 85 & 113 & 268 \\
			& time & 1.50 & 0.12 & 0.32 & 0.08 & \textbf{0.06} & 0.09 & 0.27 \\
			%& obj. value & 0.2446 & 0.2446 & 0.2446 & 0.2446 & 0.2446 & 0.2446 & 0.2446 \\
			\hline
			\multirow{3}{*}{$10^{-8}$} & outer iter. & -- & -- & -- & 25 & 24 & 6 & 6 \\
			& inner iter. & 4932 & 727 & 100 & 173 & 175 & 118 & 373 \\
			& time & 2.18 & 0.25 & 0.43  & 0.11 & 0.13 & \textbf{0.08} & 0.30 \\
			%& obj. value & 0.2446 & 0.2446 & 0.2446 & 0.2446 & 0.2446 & 0.2446 & 0.2446 \\
			\hline
		\end{tabular}
		
		\medskip
		* The time for computing the Lipschitz constant $L_1$ in \textsf{FISTA} is 0.04.
	\end{subtable}
\end{table}

\begin{table}[t]
	\centering
	\caption{Computational results on \textsf{rcv1}.}
	\label{tab:rcv1}
	\begin{subtable}[t]{\linewidth}
		\centering
		\caption{$x^0 = 0$}
		\label{stab:rcv11}
		\begin{tabular}{c c c c c c c c c}
			\hline 
			\multirow{2}{*}{Tol.} & \multirow{2}{*}{Index} & \multirow{2}{*}{\textsf{FISTA}$^*$} & \multirow{2}{*}{\textsf{SpaRSA}} & \multirow{2}{*}{\textsf{CGD}} & \multirow{2}{*}{\textsf{newG.}} & \textsf{IRPN} & \textsf{IRPN} & \textsf{IRPN} \\
			& & & & & & $\rho = 0$ & $\rho = 0.5$ & $\rho = 1$ \\
			\hline
			\hline
			\multirow{3}{*}{$10^{-4}$} & outer iter. & -- & -- & -- & 9 & 5 & 3 & 3 \\
			& inner iter. & 113 & 30 & 8 & 20 & 19 & 20 & 34 \\
			& time & 39.13 & 0.29 & 0.46 & 0.32 & 0.27 & \textbf{0.26} & 0.42 \\
			%& obj. value & 0.4743 & 0.4741 & 0.4740 & 0.4740 & 0.4740 & 0.4740 & 0.4740 \\
			\hline
			\multirow{3}{*}{$10^{-6}$} & outer iter. & -- & -- & -- & 17 & 11 & 5 & 5 \\
			& inner iter. & 377 & 280 & 35 & 55 & 48 & 36 & 106 \\
			& time & 45.42 & 2.60 & 2.05 & 0.77 & 0.64 & \textbf{0.48} & 1.21 \\
			%& obj. value & 0.4739 & 0.4739 & 0.4739 & 0.4739 & 0.4739 & 0.4739 & 0.4739 \\
			\hline
			\multirow{3}{*}{$10^{-8}$} & outer iter. & -- & -- & -- & 23 & 16 & 5 & 5 \\
			& inner iter. & 697 & 549 & 61 & 90 & 77 & 61 & 126 \\
			& time & 46.67 & 5.08 & 3.51  & 1.18 & 1.03 & \textbf{0.70} & 1.32 \\
			%& obj. value & 0.4739 & 0.4739 & 0.4739 & 0.4739 & 0.4739 & 0.4739 & 0.4739 \\
			\hline
		\end{tabular}
	\end{subtable}
	
	\bigskip
	\bigskip
	\begin{subtable}[t]{\linewidth}
		\centering
		\caption{$x^0 = 10\xi$}
		\label{stab:rcv12}
		\begin{tabular}{c c c c c c c c c}
			\hline
			\multirow{2}{*}{Tol.} & \multirow{2}{*}{Index} & \multirow{2}{*}{\textsf{FISTA}$^*$} & \multirow{2}{*}{\textsf{SpaRSA}} & \multirow{2}{*}{\textsf{CGD}} & \multirow{2}{*}{\textsf{newG.}} & \textsf{IRPN} & \textsf{IRPN} & \textsf{IRPN} \\
			& & & & & & $\rho = 0$ & $\rho = 0.5$ & $\rho = 1$ \\
			\hline
			\hline
			\multirow{3}{*}{$10^{-4}$} & outer iter. & -- & -- & -- & 12 & 8 & 5 & 5 \\
			& inner iter. & 308 & 50 & 15 & 25 & 23 & 27 & 44 \\
			& time & 41.58 & 0.48 & 0.88 & 0.41 & \textbf{0.35} & 0.37 & 0.57 \\
			%& obj. value & 0.4741 & 0.4739 & 0.4740 & 0.4740 & 0.4740 & 0.4739 & 0.4739 \\
			\hline
			\multirow{3}{*}{$10^{-6}$} & outer iter. & -- & -- & -- & 19 & 12 & 7 & 7 \\
			& inner iter. & 595 & 286 & 47 & 54 & 47 & 57 & 112 \\
			& time & 45.25 & 2.65 & 2.77 & 0.77 & \textbf{0.66} & 0.68 & 1.24 \\
			%& obj. value & 0.4739 & 0.4739 & 0.4739 & 0.4739 & 0.4739 & 0.4739 & 0.4739 \\
			\hline
			\multirow{3}{*}{$10^{-8}$} & outer iter. & -- & -- & -- & 26 & 19 & 8 & 8 \\
			& inner iter. & 1009 & 591 & 67 & 101 & 87 & 97 & 141 \\
			& time & 51.31 & 5.47 & 3.85  & 1.34 & 1.17 & \textbf{1.14} & 1.54 \\
			%& obj. value & 0.4739 & 0.4739 & 0.4739 & 0.4739 & 0.4739 & 0.4739 & 0.4739 \\
			\hline
		\end{tabular}
	\end{subtable}
	
	\medskip
	* The time for computing the Lipschitz constant $L_1$ in \textsf{FISTA} is 36.14.
\end{table}

\begin{table}[t]
	\centering
	\caption{Computational results on \textsf{news20}.}
	\label{tab:new20}
	\begin{subtable}[t]{\linewidth}
		\centering
		\caption{$x^0 = 0$}
		\label{stab:news201}
		\begin{tabular}{c c c c c c c c c}
			\hline 
			\multirow{2}{*}{Tol.} & \multirow{2}{*}{Index} & \multirow{2}{*}{\textsf{FISTA}$^*$} & \multirow{2}{*}{\textsf{SpaRSA}} & \multirow{2}{*}{\textsf{CGD}} & \multirow{2}{*}{\textsf{newG.}} & \textsf{IRPN} & \textsf{IRPN} & \textsf{IRPN} \\
			& & & & & & $\rho = 0$ & $\rho = 0.5$ & $\rho = 1$ \\
			\hline
			\hline
			\multirow{3}{*}{$10^{-4}$} & outer iter. & -- & -- & -- & 9 & 5 & 3 & 3 \\
			& inner iter. & 182 & 48 & 33 & 41 & 37 & 54 & 127 \\
			& time & 77.68 & 5.63 & 12.13 & 4.92 & \textbf{4.16} & 5.68 & 12.87 \\
			%			& obj. value & 0.5919 & 0.5919 & 0.5919 & 0.5919 & 0.5919 & 0.5919 & 0.5919 \\
			\hline
			\multirow{3}{*}{$10^{-6}$} & outer iter. & -- & -- & -- & 17 & 12 & 4 & 4 \\
			& inner iter. & 566 & 220 & 98 & 115 & 100 & 107 & 223 \\
			& time & 131.01 & 25.31 & 36.21 & 12.58 & \textbf{10.99} & \textbf{10.99} & 22.40 \\
			%			& obj. value & 0.5919 & 0.5919 & 0.5919 & 0.5919 & 0.5919 & 0.5919 & 0.5919 \\
			\hline
			\multirow{3}{*}{$10^{-8}$} & outer iter. & -- & -- & -- & 23 & 18 & 4 & 4 \\
			& inner iter. & 978 & 580 & 155 & 189 & 170 & 157 & 248 \\
			& time & 188.77 & 67.62 & 57.23  & 20.15 & 18.51 & \textbf{16.36} & 24.84 \\
			%			& obj. value & 0.5919 & 0.5919 & 0.5919 & 0.5919 & 0.5919 & 0.5919 & 0.5919 \\
			\hline
		\end{tabular}
	\end{subtable}
	
	\bigskip
	\bigskip
	\begin{subtable}[t]{\linewidth}
		\centering
		\caption{$x^0 = 10\xi$}
		\label{stab:news202}
		\begin{tabular}{c c c c c c c c c}
			\hline 
			\multirow{2}{*}{Tol.} & \multirow{2}{*}{Index} & \multirow{2}{*}{\textsf{FISTA}$^*$} & \multirow{2}{*}{\textsf{SpaRSA}} & \multirow{2}{*}{\textsf{CGD}} & \multirow{2}{*}{\textsf{newG.}} & \textsf{IRPN} & \textsf{IRPN} & \textsf{IRPN} \\
			& & & & & & $\rho = 0$ & $\rho = 0.5$ & $\rho = 1$ \\
			\hline
			\hline
			\multirow{3}{*}{$10^{-4}$} & outer iter. & -- & -- & -- & 13 & 7 & 6 & 5 \\
			& inner iter. & 647 & 296 & 57 & 76 & 67 & 89 & 95 \\
			& time & 150.79 & 35.31 & 21.10 & 7.74 & \textbf{7.40} & 9.06 & 9.60 \\
			%			& obj. value & 0.5920 & 0.5920 & 0.5919 & 0.5919 & 0.5919 & 0.5919 & 0.5919 \\
			\hline
			\multirow{3}{*}{$10^{-6}$} & outer iter. & -- & -- & -- & 20 & 14 & 7 & 6 \\
			& inner iter. & 1055 & 302 & 108 & 166 & 154 & 155 & 179 \\
			& time & 207.24 & 35.88 & 40.04 & 16.41 & 16.17 & \textbf{15.41} & 17.54 \\
			%			& obj. value & 0.5919 & 0.5919 & 0.5919 & 0.5919 & 0.5919 & 0.5919 & 0.5919 \\
			\hline
			\multirow{3}{*}{$10^{-8}$} & outer iter. & -- & -- & -- & 26 & 21 & 7 & 7 \\
			& inner iter. & 1430 & 946 & 182 & 212 & 189 & 191 & 286 \\
			& time & 258.89 & 108.36 & 67.34  & 21.03 & 19.72 & \textbf{18.72} & 27.77 \\
			%			& obj. value & 0.5919 & 0.5919 & 0.5919 & 0.5919 & 0.5919 & 0.5919 & 0.5919 \\
			\hline
		\end{tabular}
	\end{subtable}
	
	\medskip
	* The time for computing the Lipschitz constant $L_1$ in \textsf{FISTA} is 51.85.
\end{table}

Before we end this section, we present in Figure \ref{fig:conv_iter_time} the convergence behavior of \textsf{IRPN} with $\rho\in\{0,0.5,1\}$ for the data sets \textsf{rcv1} and \textsf{news20}. The left two sub-figures show the convergence of the sequence of residues $\{r(x^k)\}_{k\ge0}$ against the number of outer iterations, where $\{x^k\}_{k\ge0}$ is the sequence of iterates generated by \textsf{IRPN}. It can be observed that the sequence $\{r(x^k)\}_{k\ge0}$ exhibits a linear convergence rate when $\rho=0$ and a superlinear rate when $\rho = 0.5$ and $1$. This, together with the Luo-Tseng EB property~\eqref{eq:def-eb}, implies that the sequence $\{\mbox{dist}(x^k,\mathcal{X})\}_{k\ge0}$ converges at least linearly when $\rho = 0$ and at least superlinearly when $\rho = 0.5$ and $1$, which corroborates our results in Proposition~\ref{prop:convergence-rate}. Nonetheless, a faster convergence rate of the outer iterates does not imply a better overall performance of \textsf{IRPN}, as a larger $\rho$ leads to a more stringent inexactness condition (see~\eqref{eq:inexact-cond}) and hence requires more inner iterations within each outer iteration. With that being said, we present the right two sub-figures in Figure \ref{fig:conv_iter_time} for comparing the overall performance of \textsf{IRPN} with $\rho\in\{0,0.5,1\}$. In particular, we show the convergence behavior of the sequence of residues $\{r(x^k)\}_{k\ge0}$ against the CPU time used, where $\{x^k\}_{k\ge0}$ is the sequence of \emph{all} iterates (including both the inner and outer ones) generated by \textsf{IRPN}. It can be observed that in terms of overall performance, \textsf{IRPN} with $\rho = 0$ and $0.5$ are comparable to each other, and they both outperform \textsf{IRPN} with $\rho = 1$.

\begin{figure}[h!]
	\centering
	\begin{subfigure}[b]{.48\linewidth}
		\includegraphics[scale=0.42]{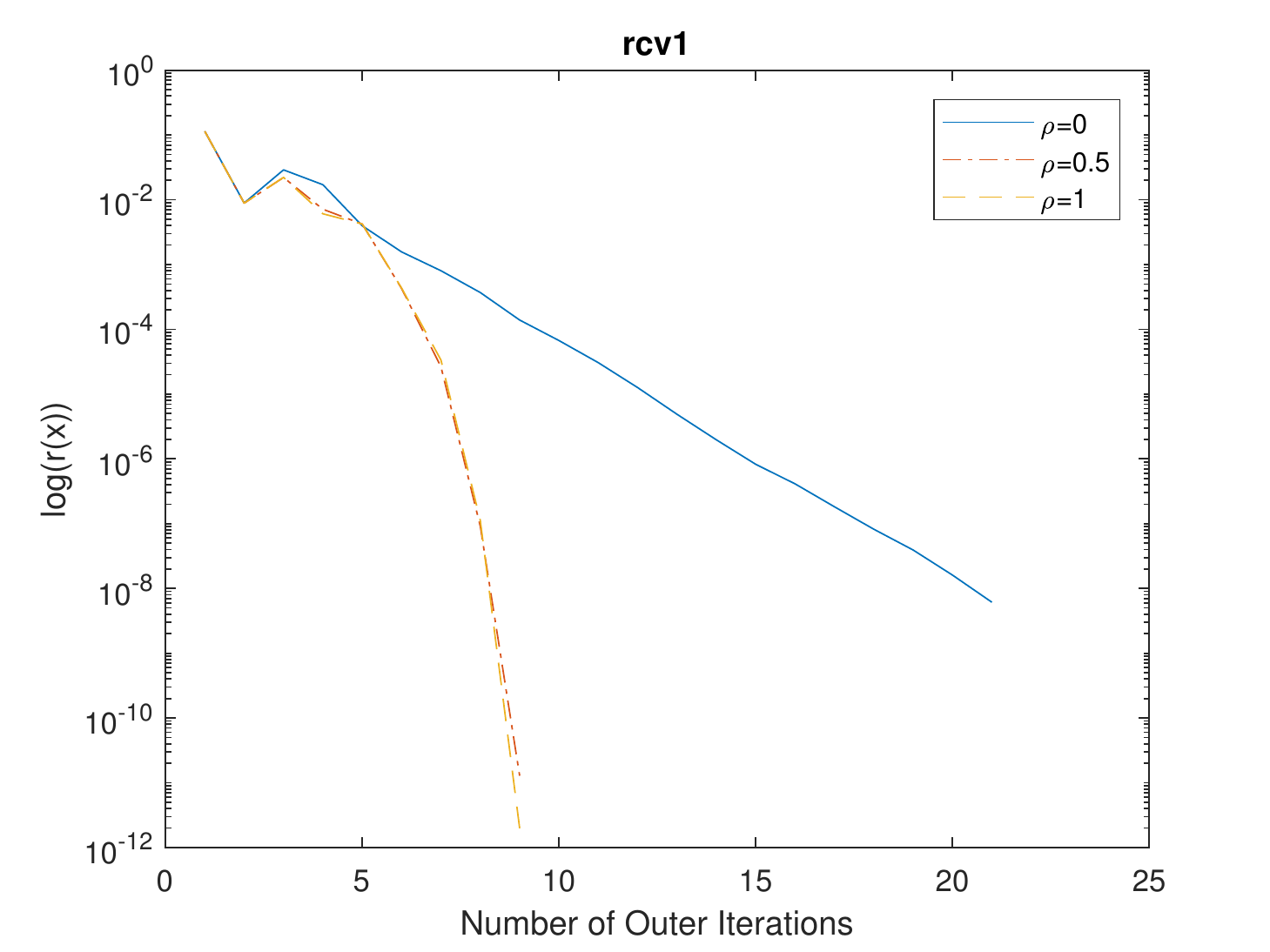}
	\end{subfigure}
	\hfill
	\begin{subfigure}[b]{.48\linewidth}
		\includegraphics[scale=0.42]{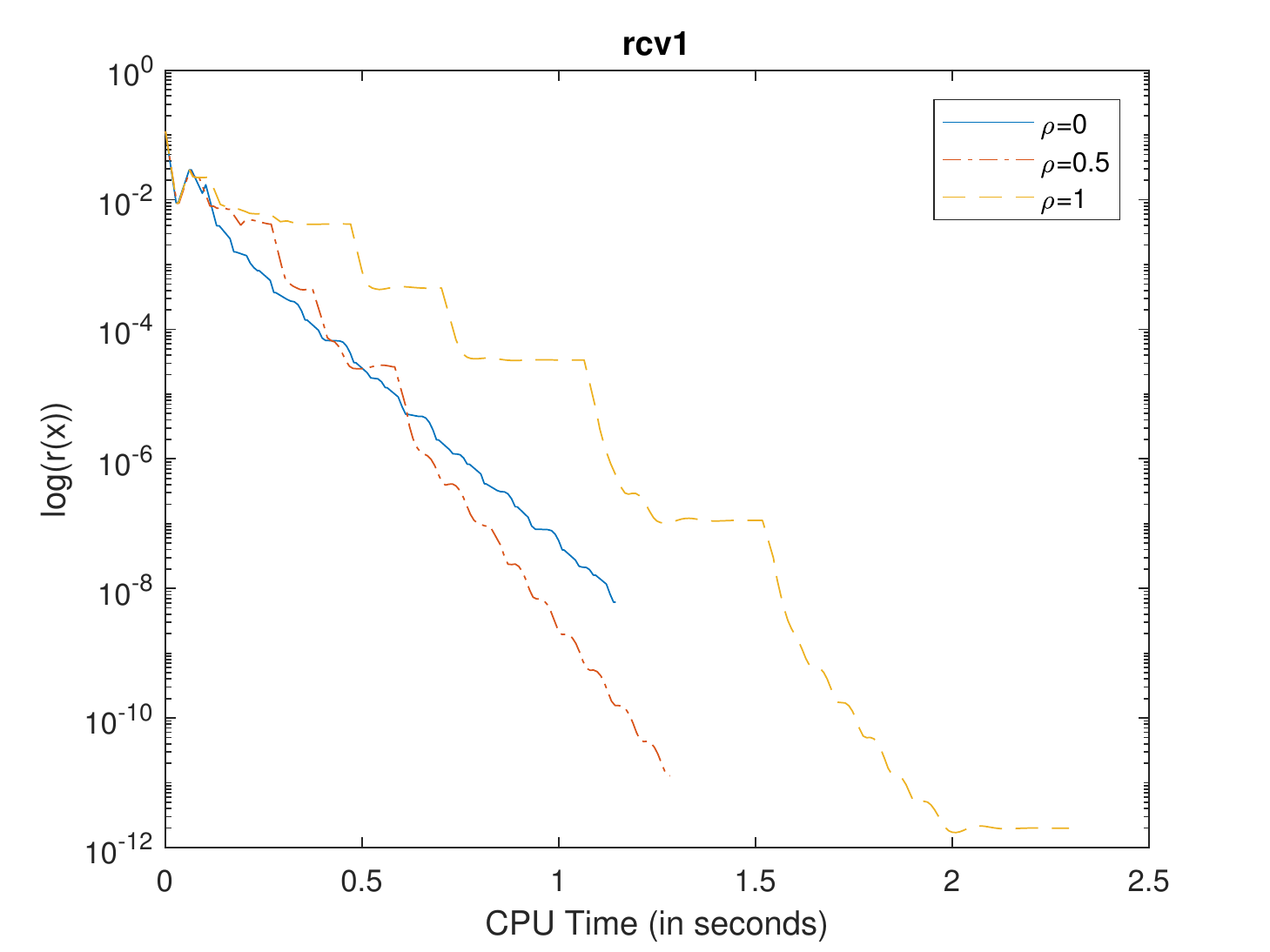}
	\end{subfigure}
	
	\medskip
	\begin{subfigure}[b]{.48\linewidth}
		\includegraphics[scale=0.42]{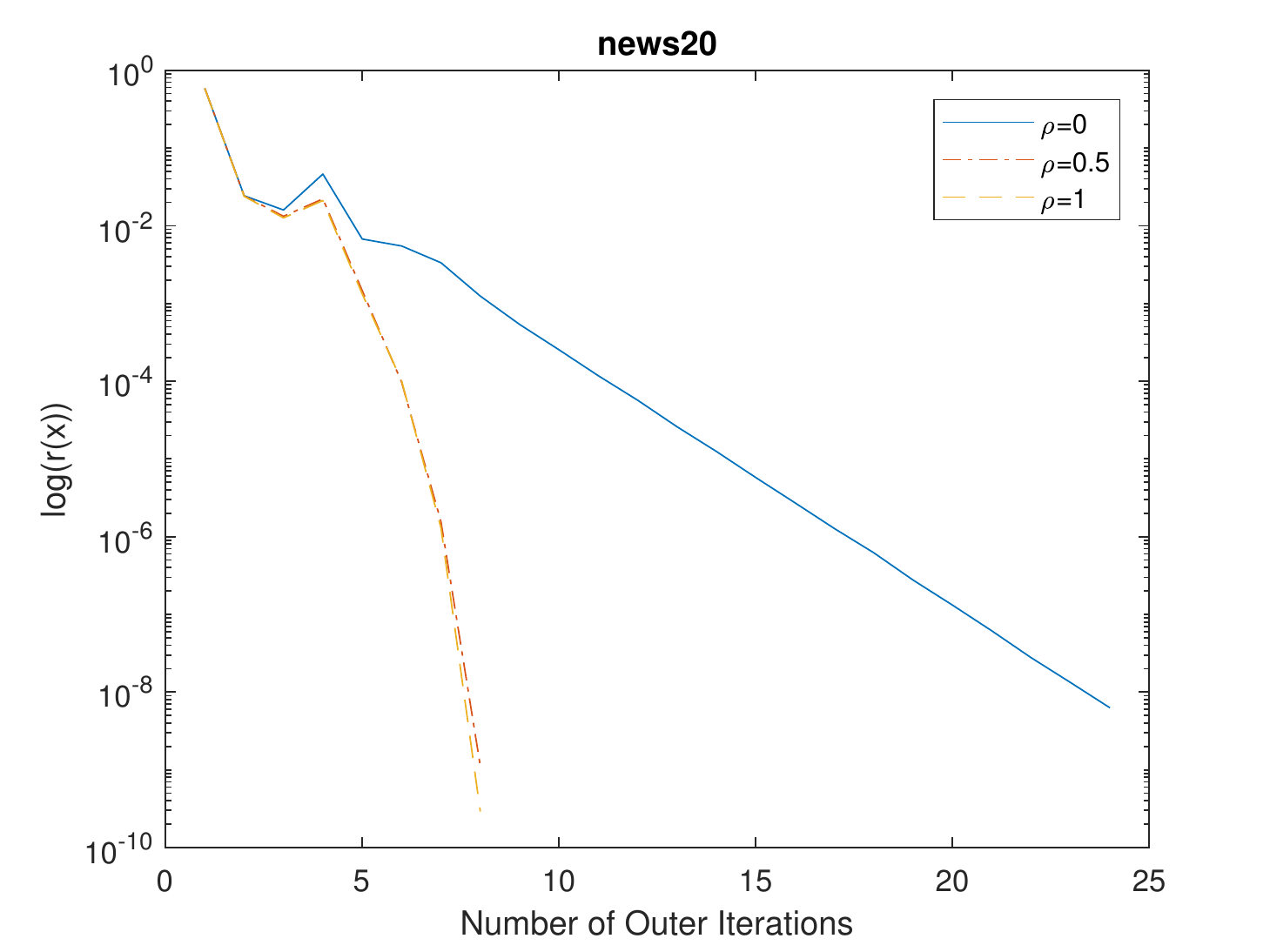}
	\end{subfigure}
	\hfill
	\begin{subfigure}[b]{.48\linewidth}
		\includegraphics[scale=0.42]{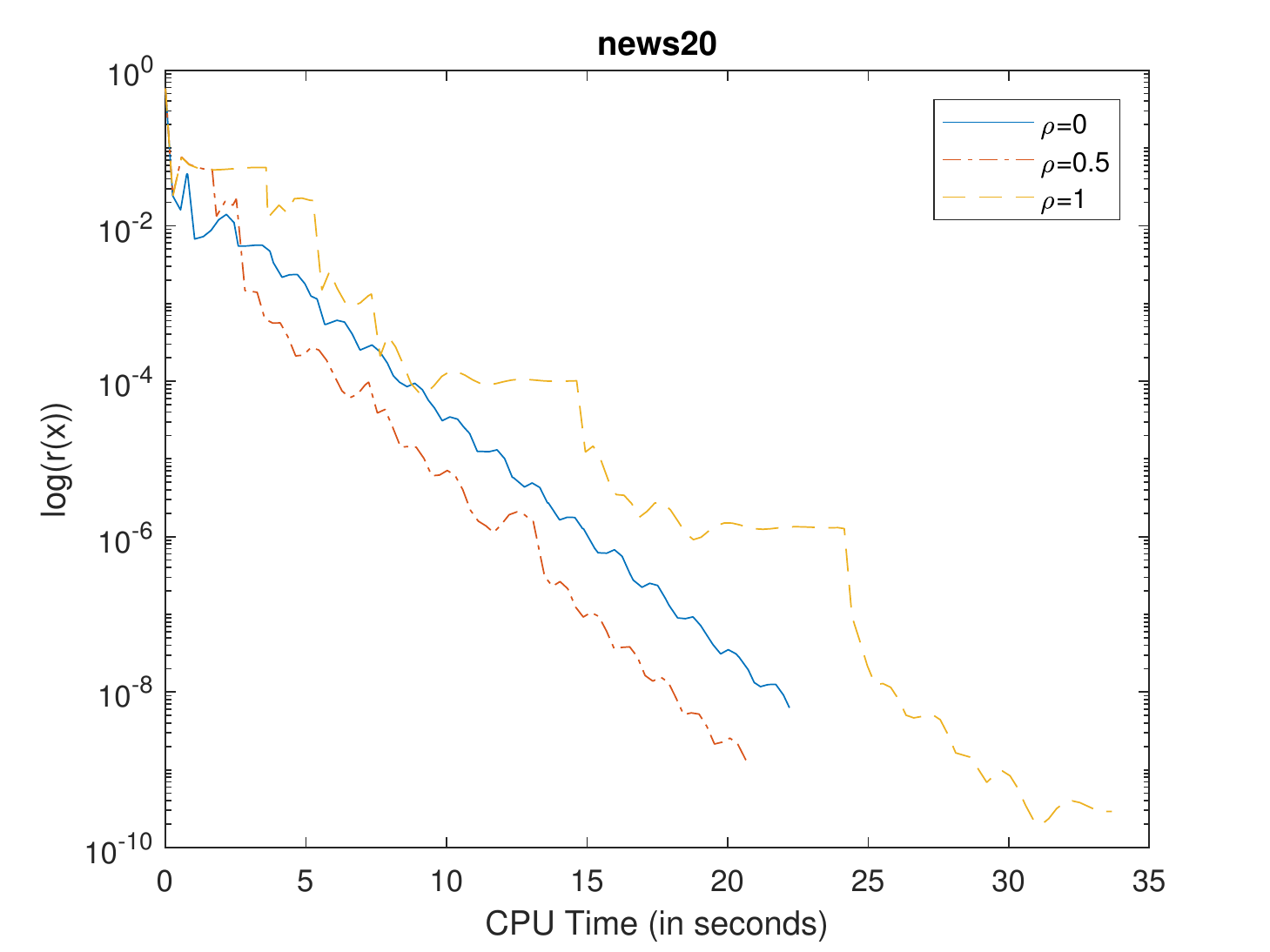}
	\end{subfigure}
	\caption{Convergence behavior of \textsf{IRPN} with $\rho \in \{0,0.5,1\}$.}
	\label{fig:conv_iter_time}
\end{figure}
 
\section{Conclusions}\label{sec:conclusion}
In this paper, we proposed a new family of inexact SQA methods called the \emph{inexact regularized proximal Newton} (\textsf{IRPN}) method for minimizing the sum of two closed proper convex functions, one of which is smooth and the other is possibly non-smooth. Compared with some prior SQA methods, \textsf{IRPN} is more flexible and has a much more satisfying convergence theory.  In particular, \textsf{IRPN} does not require an exact inner solver or the strong convexity of the smooth part of the objective function. Moreover, it can be shown to converge globally to an optimal solution and the local convergence rate is linear, superlinear, or even quadratic, depending on the choice of parameters of the algorithm. The key to our analysis is the Luo-Tseng error bound property. Although such property has played a fundamental role in establishing the linear convergence of various first-order methods, to the best of our knowledge, this is the first work to exploit such property to establish the superlinear convergence of SQA-type methods for non-smooth convex minimization. We compared our proposed \textsf{IRPN} with several popular and efficient algorithms by applying them to the $\ell_1$-regularized logistic regression problem. Experiment results indicate that \textsf{IRPN} achieves the desired levels of accuracy in shorter training time and less number of iterations than the other algorithms. This shows that \textsf{IRPN} not only has strong theoretical guarantees but also a superior numerical performance.

\begin{acknowledgements}
We thank the anonymous reviewers for their detailed and helpful comments. Most of the work of the first and second authors was done when they were PhD students at the Department of Systems Engineering and Engineering Management of The Chinese University of Hong Kong. 
\end{acknowledgements}

\bibliographystyle{abbrv}
\bibliography{sdpbib}

\end{document}